\documentclass{article}

\PassOptionsToPackage{numbers,sort&compress}{natbib}
\usepackage[preprint]{KSBTS}
\usepackage[utf8]{inputenc}
\usepackage[T1]{fontenc}

\usepackage{microtype}
\usepackage{graphicx}
\usepackage{subfigure}
\usepackage{booktabs}
\usepackage{hyperref}
\usepackage{url}
\usepackage{xcolor}
\usepackage{textcomp}
\usepackage{amsthm}
\usepackage{aliascnt}
\usepackage{amsmath}
\usepackage{amssymb}
\usepackage{wrapfig}
\usepackage{amsfonts}
\usepackage{mathtools}
\mathtoolsset{showonlyrefs}
\usepackage{nicefrac}
\usepackage{bm}
\usepackage[textsize=tiny]{todonotes}
\usepackage[capitalize,noabbrev]{cleveref}

\usepackage{float}              
\usepackage{placeins}  
\usepackage{needspace}          

\theoremstyle{plain}
\newtheorem{theorem}{Theorem}[section]

\newaliascnt{proposition}{theorem}
\newtheorem{proposition}[proposition]{Proposition}
\aliascntresetthe{proposition}

\newaliascnt{lemma}{theorem}
\newtheorem{lemma}[lemma]{Lemma}
\aliascntresetthe{lemma}

\newaliascnt{corollary}{theorem}
\newtheorem{corollary}[corollary]{Corollary}
\aliascntresetthe{corollary}

\theoremstyle{definition}

\newaliascnt{definition}{theorem}

\aliascntresetthe{definition}

\newaliascnt{assumption}{theorem}
\newtheorem{assumption}[assumption]{Assumption}
\aliascntresetthe{assumption}

\theoremstyle{remark}

\newaliascnt{remark}{theorem}
\newtheorem{remark}[remark]{Remark}
\aliascntresetthe{remark}

\crefname{theorem}{theorem}{theorems}
\Crefname{theorem}{Theorem}{Theorems}

\crefname{proposition}{proposition}{propositions}
\Crefname{proposition}{Proposition}{Propositions}

\crefname{lemma}{lemma}{lemmas}
\Crefname{lemma}{Lemma}{Lemmas}

\crefname{corollary}{corollary}{corollaries}
\Crefname{corollary}{Corollary}{Corollaries}

\crefname{definition}{definition}{definitions}
\Crefname{definition}{Definition}{Definitions}

\crefname{assumption}{assumption}{assumptions}
\Crefname{assumption}{Assumption}{Assumptions}

\crefname{remark}{remark}{remarks}
\Crefname{remark}{Remark}{Remarks}

\newcommand{\R}{\mathbb{R}}
\newcommand{\E}{\mathbb{E}}
\renewcommand{\P}{\mathbb{P}}

\newcommand{\avg}{\text{avg}}
\newcommand{\Var}{\text{Var}}
\newcommand{\TruncNormal}{\text{TruncNormal}}
\newcommand{\ISE}{\text{ISE}}

\title{Direct Estimation of Schrödinger Bridge Time-Series Drifts:
Finite-Sample, Asymptotic, and Adaptive Guarantees}

\author{%
Othmane MAZHAR\thanks{This author was supported by the BNP-Paribas Chair ``Futures of Quantitative Finance''.}\\
LPSM, Sorbonne Universit\'e and Universit\'e Paris Cit\'e\\
\texttt{othmane.xx90@gmail.com}
\And
Huy\^en PHAM\thanks{This author is supported by the Chair ``Financial Risks'', by FiME (Laboratory of Finance and Energy Markets), and the EDF--CACIB Chair ``Finance and Sustainable Development''.}\\
\'{E}cole Polytechnique, CMAP\\
\texttt{huyen.pham@polytechnique.edu}
}

\begin{document}

\maketitle
\begin{abstract}
We study nonparametric estimation of Schr\"odinger bridge (SB) drifts from i.i.d.\ data observed on a single time interval. Starting from the conditional-ratio form of the Schr\"odinger bridge time-series (SBTS) drift formula, we analyze a direct Nadaraya--Watson plug-in estimator built from kernelized numerator and denominator terms. Unlike recent SB analyses based on entropic-OT potentials, Sinkhorn iterations, or iterative bridge solvers, our approach works directly at the drift level and isolates \emph{statistical error} from optimization, approximation, and discretization error.

Under H\"older regularity, a marginal-density floor, and bounded support, we prove a uniform non-asymptotic bound for admissible bandwidth pairs, a pointwise CLT under genuine undersmoothing, and an adaptive bandwidth selector satisfying an oracle inequality. We also prove a pivot-local minimax lower bound which, through an explicit uniform pivot, yields a global minimax lower bound under transparent compatibility conditions; hence the adaptive selector is minimax-rate optimal up to logarithmic factors. Synthetic experiments provide theorem-targeted diagnostics for finite-sample scaling, Gaussian approximation, and adaptive behavior.
\end{abstract}

\section{Introduction}

Schr\"odinger bridges (SBs) interpolate between probability distributions by minimizing relative
entropy on path space and connect stochastic control, entropic optimal transport, and diffusion-based
generative modeling
\cite{schrodinger1931umkehrung,jamison1975markov,follmer1988randomfields,
leonard2014survey,chen2021siamreview}. They have recently been used for generative modeling,
sampling, and time-series generation
\cite{debortoli2021sb,strome2023sampling,hamdouche2023sbts,alouadi2025robust}.
Yet the \emph{statistical theory} of data-driven SB estimators remains limited, especially for
\emph{direct nonparametric drift estimation}.

Most existing analyses do not treat the drift as the primary statistical object. They instead proceed
through entropic-OT potentials, Sinkhorn iterations, or iterative bridge solvers, so the resulting
guarantees are intertwined with optimization, approximation, or discretization error. We take a
different route: starting from the closed-form conditional-ratio representation of the SB drift, we
analyze a direct kernel plug-in estimator and study the \emph{statistical error of the drift itself}.
This drift-level viewpoint is also essential for sharpness: our lower bound perturbs the drift directly
while preserving normalization, denominator separation, and the global smoothness class.

\paragraph{Problem and scope.}
Fix one interval $[s,u]$ with $0<s<u$. In the first-order SBTS notation of
\cite{hamdouche2023sbts}, the drift on $[s,u)$ depends on the observed past through
$\xi=X_s$. Given $M$ i.i.d. samples
$\{(X_s^{(m)},X_u^{(m)})\}_{m=1}^M\sim\nu$, our goal is to estimate
$a^*(t,x;\xi)$ and quantify its accuracy. This first-order notation is only for readability:
fixed-memory extensions replace $\xi\in\mathbb R^d$ by a lag block in $(\mathbb R^d)^L$,
with $d$ replaced by $Ld$, and finite-grid bounds follow intervalwise by a union bound.

\paragraph{Why this is nontrivial.}
The drift is a \emph{ratio} of conditional expectations, so numerator and denominator errors must
be transferred stably and empirical denominators must inherit deterministic population floors.
Moreover, the kernelized terms are weighted by an SB exponential with both \emph{contracting}
and \emph{expanding} factors, and the drift is singular as $t\uparrow u$. Bounded support is therefore
central: it controls the expanding factor and makes the weighted empirical class analyzable.

\paragraph{Contributions.}
Our contributions are fourfold.
\begin{enumerate}
    \item \textbf{Direct drift-level analysis.}
    We analyze a kernel plug-in estimator derived from the SBTS conditional-ratio drift formula,
    decoupling statistical error from Sinkhorn stability, iterative solver error, and SDE discretization.

    \item \textbf{Uniform finite-sample guarantees.}
    Under H\"older smoothness, a marginal-density floor, and bounded support, we prove a uniform
    non-asymptotic bound over space and time away from the terminal singularity, with rate
    $(\log M/M)^{\beta/(2\beta+d)}$ up to constants, the terminal factor, and logarithmic terms.

    \item \textbf{Pointwise inference.}
    We prove
    $\sqrt{Mh^d}\,(\hat a(t,x;\xi)-a^*(t,x;\xi))
    \Rightarrow \mathcal N(0,\Sigma(t,x;\xi))$
    under genuine undersmoothing and give a plug-in variance formula for confidence intervals.

    \item \textbf{Adaptive minimax optimality.}
    We construct a data-driven bandwidth selector satisfying an oracle inequality. A pivot-local
    Le Cam lower bound shows that the rate cannot be improved around interior laws, and an explicit
    uniform pivot yields a global minimax lower bound under transparent compatibility conditions.
    Thus the adaptive selector is globally minimax-rate optimal up to logarithmic factors.
\end{enumerate}

The empirical section is theorem-aligned rather than benchmark-driven: synthetic experiments with
known drifts assess finite-sample scaling, pointwise Gaussian approximation, confidence-interval
coverage, and oracle-competitiveness of the adaptive selector.

\section{Related Work}

Recent SB theory and algorithms typically proceed through entropic optimal transport, Sinkhorn-type
iterations, or iterative bridge solvers; see
\cite{schrodinger1931umkehrung,jamison1975markov,follmer1988randomfields,
leonard2014survey,chen2021siamreview} for background and
\cite{cuturi2013sinkhorn,debortoli2021sb,liu_etal_2024_gsbm,strome2023sampling}
for modern computational, generative, sampling, and bridge-matching perspectives. In that line of work, the drift is usually obtained only after
estimating potentials or solving an auxiliary optimization problem, so the final guarantees are coupled
with computational stability, iteration error, or discretization error. Our paper takes a different route:
starting from the closed-form SBTS drift representation of \cite{hamdouche2023sbts}, we study the
resulting conditional-ratio drift formula and estimate the drift directly.

Recent statistical and nonparametric analyses in the SB/entropic-OT literature include sample-complexity
and CLT results for entropic OT and Sinkhorn divergences \cite{genevay2019sinkhorn,mena2019entropic},
plug-in estimation of Schr\"odinger bridges via entropic-OT potentials \cite{pooladian2024plugin}, and
forward--reverse kernel methods for Schr\"odinger-type quantities \cite{belomestny2025forwardreverse}.
In the time-series setting, \cite{hamdouche2023sbts} provides the SBTS path-space formalism,
optimal-drift characterization, and conditional-ratio representation used here, while
\cite{alouadi2025robust} provides extensive empirical benchmarks for SBTS generation. Our repeated-sample
formulation matches this SBTS data model, is standard in sample-based OT/SB studies
\cite{liu_etal_2024_gsbm,pooladian2024plugin}, and parallels copies-based drift estimation
\cite{marie_rosier_2023_nw_iid_paths,panloup_tindel_varvenne_2020_drift}. These works are complementary
to ours: our target is the drift estimator itself, our main setting is the SBTS single-interval formulation,
and our guarantees are stated directly at the drift level.

\paragraph{Technical distinction from standard kernel theory.} Our analysis uses classical kernel and
pointwise-asymptotic ideas \cite{gyorfi2002,tsybakov2009nonparametric}, together with adaptive
bandwidth-selection ideas \cite{lepski1997,goldenshluger2011}, but these tools do not apply off the shelf.
The estimand is a drift-level ratio of weighted conditional moments, and the SB weight contains both a
contracting Gaussian term and an expanding exponential term. The proof therefore needs a deterministic
ratio-transfer step, empirical denominator localization, and a weighted-class entropy argument showing
that the SB-weighted kernel class retains the entropy order of translated kernels. The lower bound also
differs from standard regression lower bounds: the perturbation must change the drift while preserving
normalization, the H\"older ball, the marginal-density floor, and the automatic SB denominator floor.
These are the SB-specific ingredients that allow the finite-sample, CLT, adaptive, and minimax results
to hold directly at the drift level.

\section{Problem Setup and Estimator}
\label{sec:setup}

Recall the SBTS construction of \cite{hamdouche2023sbts}. For an observation grid
$\mathcal T=\{t_1,\ldots,t_N\}$ and a target time-series law $\mu$ on $(\R^d)^N$, the bridge is the
path-space entropy projection
\begin{equation}
P^*\in\arg\min_P
\left\{
H(P\mid W):(X_{t_1},\ldots,X_{t_N})_\#P=\mu
\right\},
\end{equation}
where $W$ is the reference Wiener law. The constraint is therefore imposed on the full joint law of
the observed time series. Writing $\mu^W_{\mathcal T}$ for the grid law under $W$ and
$\varrho:=d\mu/d\mu^W_{\mathcal T}$, the optimizer is the tilt
$dP^*/dW=\varrho(X_{t_1},\ldots,X_{t_N})$. The stochastic-control formulation gives an adapted
log-gradient drift, and \cite{hamdouche2023sbts} derive from it the conditional-ratio formula used
below. We study this formula on a fixed interval $[s,u]$, with $0<s<u$. Write $\Delta:=u-s$ and
$\Delta(t):=u-t$ for $t\in[s,u)$. We observe $M$ i.i.d.\ copies
$\{(X_s^{(m)},X_u^{(m)})\}_{m=1}^M$ of an $\R^d\times\R^d$-valued random pair $(X_s,X_u)$
with joint law $\nu$. Throughout, $B\subset\R^d$ denotes a bounded set containing the support of
both $X_s$ and $X_u$.

For readability, we use the first-order notation of the SBTS drift formula: on the observation grid,
the conditional law of the next state depends on the past only through the current endpoint. Thus,
with $s=t_i$, $u=t_{i+1}$, and $\xi=X_s$, the drift on $[s,u)$ is
\begin{equation}
\label{eq:sb-drift}
a^*(t,x;\xi)
=
\frac{1}{\Delta(t)}
\left(
\frac{\E\!\left[X_u\ F(t,\xi,x,X_u)\mid X_s=\xi\right]}
{\E\!\left[F(t,\xi,x,X_u)\mid X_s=\xi\right]}
-x
\right),
\ 
F(t,\xi,x,y):=e^{-\frac{|y-x|^2}{2\Delta(t)}+\frac{|y-\xi|^2}{2\Delta}}.
\end{equation}
This conditional-ratio formula is the starting point of the paper: we estimate the drift directly from
\eqref{eq:sb-drift}, rather than through potentials, Sinkhorn iterations, or an auxiliary bridge solver.

Let $p$ denote the joint density of $(X_s,X_u)$ on $B\times B$, let $f(\xi):=\int_B p(\xi,y)\ dy$ be the marginal density of $X_s$, and define
\begin{align}
\label{eq:g1-pop}
g_1(t,x;\xi)
&:=\int_B F(t,\xi,x,y)p(\xi,y)\ dy
= f(\xi)\E\!\left[F(t,\xi,x,X_u)\mid X_s=\xi\right],\\
\label{eq:g2-pop}
g_2(t,x;\xi)
&:=\int_B y\ F(t,\xi,x,y)p(\xi,y)\ dy
= f(\xi)\E\!\left[X_uF(t,\xi,x,X_u)\mid X_s=\xi\right].
\end{align}
Set $D^*(t,x;\xi):=g_1(t,x;\xi)/f(\xi)$, $N^*(t,x;\xi):=g_2(t,x;\xi)/f(\xi)$, and $Q^*(t,x;\xi):=N^*(t,x;\xi)/D^*(t,x;\xi)$. Then
\begin{equation}
\label{eq:adrift-pop}
a^*(t,x;\xi)
=
\frac{1}{\Delta(t)}\left(Q^*(t,x;\xi)-x\right)
=
\frac{1}{\Delta(t)}
\left(\frac{N^*(t,x;\xi)}{D^*(t,x;\xi)}-x\right).
\end{equation}

\paragraph{Kernel plug-in estimator.}
Fix a possibly signed kernel $K : \mathbb R^d \to \mathbb R$ and bandwidths
$h_1,h_2>0$, with $K_h(z):=h^{-d}K(z/h)$. For $(t,x,\xi)\in[s,u)\times\R^d\times B$, define
\begin{align}
\label{eq:fhat}
\hat f_j(\xi)&:=\frac{1}{M}\sum_{m=1}^M K_{h_j}(X_s^{(m)}-\xi),\qquad j\in\{1,2\},\\
\label{eq:ghat1}
\hat g_1(t,x;\xi)&:=\frac{1}{M}\sum_{m=1}^M F(t,\xi,x,X_u^{(m)})\ K_{h_1}(X_s^{(m)}-\xi),\\
\label{eq:ghat2}
\hat g_2(t,x;\xi)&:=\frac{1}{M}\sum_{m=1}^M X_u^{(m)}\ F(t,\xi,x,X_u^{(m)})\ K_{h_2}(X_s^{(m)}-\xi).
\end{align}
Let $\hat D(t,x;\xi):=\hat g_1(t,x;\xi)/\hat f_1(\xi)$ and $\hat N(t,x;\xi):=\hat g_2(t,x;\xi)/\hat f_2(\xi)$. The direct Nadaraya--Watson plug-in estimator
\cite{nadaraya1964estimating,watson1964smooth} is
\begin{equation}
\label{eq:adrift-hat}
\hat a(t,x;\xi):=
\frac{1}{\Delta(t)}
\left(\frac{\hat N(t,x;\xi)}{\hat D(t,x;\xi)}-x\right).
\end{equation}
For risk statements we use the denominator-floor convention stated in Appendix~\ref{app:adaptive}; it is inactive on the high-probability events used below.
We allow $h_1\neq h_2$ because the two weighted functionals may have different effective envelopes and variance levels. In the main-text CLT and adaptive results, however, we specialize to the diagonal choice $h_1=h_2=h$ to keep the bandwidth parameter one-dimensional. The unequal-bandwidth CLT, and adaptive versions are given in Appendix~\ref{app:clt} to~\ref{app:lower}.

\subsection{Assumptions}

We state assumptions for the \emph{SBTS single-interval model} above.

\begin{assumption}[Kernel]
\label{ass:kernel}
Let $\ell_\beta:=\max\{m\in\mathbb N_0:m<\beta\}$ and $\theta_\beta:=\beta-\ell_\beta\in(0,1]$.
The kernel $K:\mathbb R^d\to\mathbb R$ is bounded, compactly supported,
Lipschitz with constant $L_K$, integrates to one, and is a multivariate
kernel of order $\ell_\beta$:
$\int_{\mathbb R^d}z^\alpha K(z)\,dz=0,
$ $ 1\le |\alpha|\le \ell_\beta$ .
Moreover,
$\mu_\beta(K):=\int_{\mathbb R^d}|z|^\beta |K(z)|\,dz<\infty$ .
\end{assumption}

\begin{assumption}[Pair-law regularity]
\label{ass:pair}
The pair $(X_s,X_u)$ admits a joint density $p$ supported on $B\times B$, where
$B\subset\mathbb{R}^d$ is bounded. There exist constants $\beta>0$, $L_p>0$, and
$f_{\min}>0$ such that:
\begin{enumerate}
\item for every $y\in B$, the map $\xi\mapsto p(\xi,y)$ on $B$ is the restriction
of a function $\bar p(\cdot,y)\in \Sigma(\beta,L_p)$, with
$\Sigma(\beta,L_p)$ defined in Appendix~\ref{Appendix A}, on a neighborhood of $B$,
uniformly in $y\in B$;
\item the marginal density $f(\xi):=\int_B p(\xi,y)\,dy$
satisfies
$\inf_{\xi\in B} f(\xi)\ge f_{\min}>0$.
\end{enumerate}
Consequently, denominator separation holds automatically on compact query sets. Indeed, for every
$R>0$ and $\eta\in(0,\Delta)$, writing $B_R:=\{x\in\mathbb R^d: |x|\le R\}$, the explicit form of $F$ and the boundedness of $B$ imply that
there exists a deterministic constant $D_{\min}=D_{\min}(B,R,\eta,\Delta)>0$ such that
$\inf_{\xi\in B,\ x\in B_R,\ t\in[s,u-\eta]}
D^*(t,x;\xi)
= \inf_{\xi\in B,\ x\in B_R,\ t\in[s,u-\eta]}
\E\!\left[F(t,\xi,x,X_u)\mid X_s=\xi\right]
\ge 2D_{\min}$.
Thus $D_{\min}$ denotes this deterministic constant and is not an additional model parameter.
\end{assumption}

\begin{remark}[Why bounded support is needed]
\label{rem:bounded-support}
The bounded-support assumption is essential for our proof strategy. The SB weight in \eqref{eq:sb-drift} contains both the contracting factor $e^{-|y-x|^2/(2\Delta(t))}$ and the expanding factor $e^{|y-\xi|^2/(2\Delta)}$. Generic sub-Gaussian tails do not provide the uniform control needed for the weighted empirical-process bounds in our argument. Bounded support makes the expansion term uniformly bounded and allows us to control the weighted class at the same entropy order as the classical translated-kernel class. In particular,
\begin{equation}
C_F:=\sup_{\xi,y\in B}e^{\frac{|y-\xi|^2}{2\Delta}}<\infty,
\qquad
C_Y:=\sup_{y\in B}|y|<\infty.
\end{equation}
\end{remark}

\begin{remark}[Fixed finite-memory extensions]
The first-order display is used only to keep notation readable. For any fixed
memory length $L$, replace the conditioning variable $\xi\in\mathbb R^d$ by the
lag block
\begin{equation}
h_i^{(L)}=(X_{t_{i-L+1}},\ldots,X_{t_i})\in(\mathbb R^d)^L
\end{equation}
and replace the pair law of $(X_{t_i},X_{t_{i+1}})$ by the joint law of
$(h_i^{(L)},X_{t_{i+1}})$. The ratio algebra is unchanged, and the kernel bias,
entropy, concentration, variance, CLT, and adaptive arguments carry over with
the conditioning dimension $d$ replaced by $Ld$. For a fixed interval $i$, the same applies to
the full finite history, with $d$ replaced by $id$. What is not covered here is estimation of the SB
drift from a single discretely observed trajectory; that problem has dependent design points and
requires different empirical-process and time-series arguments.
\end{remark}

\paragraph{Bandwidth scale.}
The oracle bandwidth scale is $h_1\asymp h_2\asymp (\log M/M)^{1/(2\beta+d)}$, up to model-dependent constants and the precise logarithmic term in the finite-sample theorem. For the pointwise CLT, however, the bandwidth must be genuinely undersmoothed and therefore must decay strictly faster than this rate-optimal scale.

\section{Main Results}
\label{sec:main}

We now present finite-sample, asymptotic, and adaptive guarantees for the estimator \eqref{eq:adrift-hat} in the
SBTS single-interval setting of Section~\ref{sec:setup}. Fix $R>0$, $\eta\in(0,\Delta)$, and $\rho>0$, and write
$B_R:=\{x\in\mathbb R^d: |x|\le R\}$. If
$\operatorname{supp}(K)\subset B(0,R_K)$, set
$B_\rho:=\{\xi\in B:\operatorname{dist}(\xi,\partial B)\ge \rho\},
\ Q_{R,\eta,\rho}:=[s,u-\eta]\times B_R\times B_\rho$ .
All uniform suprema below are taken over $Q_{R,\eta,\rho}$, and bandwidths satisfy
$h_j\le \rho/R_K$. Constants may also depend on $\rho$, but not on $(M,h_1,h_2)$.

\subsection{Deterministic ratio transfer}

The following proposition isolates the deterministic step that turns estimation error for the kernelized building blocks into estimation error for the drift.

\begin{proposition}[Deterministic ratio-stability transfer]
\label{prop:ratio-transfer}
Assume \Cref{ass:pair}. Let
\begin{equation}
\mathcal E
:= \left\{
\inf_{\xi\in B_\rho}\hat f_1(\xi)\ge \frac{f_{\min}}2,\quad
\inf_{\xi\in B_\rho}\hat f_2(\xi)\ge \frac{f_{\min}}2,\quad
\inf_{(t,x,\xi)\in Q_{R,\eta,\rho}}\hat D(t,x;\xi)\ge \frac{D_{\min}}2
\right\}.
\end{equation}
Then on $\mathcal E$, for every $(t,x,\xi)\in Q_{R,\eta,\rho}$,
\begin{align*}
|\hat a(t,x;\xi)-a^*(t,x;\xi)|
\le
&\frac{4}{\Delta(t)D_{\min}}
\Bigg[\frac{f(\xi)|\hat g_2(t,x;\xi)-g_2(t,x;\xi)|+|g_2(t,x;\xi)|\ |\hat f_2(\xi)-f(\xi)|}{f_{\min}^2}
\\
&+ |Q^*(t,x;\xi)|
\frac{f(\xi)|\hat g_1(t,x;\xi)-g_1(t,x;\xi)|+|g_1(t,x;\xi)|\ |\hat f_1(\xi)-f(\xi)|}{f_{\min}^2}
\Bigg].
\end{align*}
\end{proposition}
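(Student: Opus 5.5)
The proof is purely algebraic and takes place on the event $\mathcal E$. First, the common factor $1/\Delta(t)$ and the $-x$ terms cancel in the difference, so
\[
\hat a-a^*=\frac{1}{\Delta(t)}\Bigl(\frac{\hat N}{\hat D}-\frac{N^*}{D^*}\Bigr).
\]
Next, write the ratio difference with the empirical denominator $\hat D$ kept in front:
\[
\frac{\hat N}{\hat D}-\frac{N^*}{D^*}
=\frac{\hat N-N^*}{\hat D}-\frac{N^*}{D^*}\cdot\frac{\hat D-D^*}{\hat D}
=\frac{(\hat N-N^*)-Q^*(\hat D-D^*)}{\hat D}.
\]
On $\mathcal E$ we have $\hat D\ge D_{\min}/2$. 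This gives
\[
|\hat a-a^*|\le \frac{2}{\Delta(t)D_{\min}}\Bigl(|\hat N-N^*|+|Q^*|\,|\hat D-D^*|\Bigr).
\]
This already has the shape of the claim, with the constant $2$ in place of the stated $4$.

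The remaining step bounds the two building-block errors, each of which is itself a ratio error. For $j=1$ and $j=2$, with $(\hat g,g)=(\hat g_j,g_j)$ and $(\hat f,f)=(\hat f_j,f)$, use the identity
\[
\frac{\hat g}{\hat f}-\frac{g}{f}=\frac{f(\hat g-g)-g(\hat f-f)}{\hat f f}.
\]
On $\mathcal E$ we have $\hat f_j(\xi)\ge f_{\min}/2$ for $\xi\in B_\rho$, and Assumption~\ref{ass:pair} gives $f(\xi)\ge f_{\min}$. Hence $\hat f f\ge f_{\min}^2/2$, and therefore
\[
|\hat N-N^*|\le \frac{2\bigl(f|\hat g_2-g_2|+|g_2|\,|\hat f_2-f|\bigr)}{f_{\min}^2},
\]
with the analogous bound for $|\hat D-D^*|$ in terms of $\hat g_1,g_1,\hat f_1$. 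Substituting these two bounds turns the constant into $2\cdot 2=4$. This yields exactly the stated inequality, with $|Q^*|$ multiplying the denominator term.

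Two points need care. First, all quantities must be well defined. Since $D^*\ge 2D_{\min}>0$ holds by Assumption~\ref{ass:pair} on $Q_{R,\eta,\rho}$ (because $B_\rho\subset B$), the quantity $Q^*$ is finite. On $\mathcal E$ both $\hat f_j$ and $\hat D$ are strictly positive, so $\hat a$ is given by \eqref{eq:adrift-hat} and the floor convention is inactive. Second, one must keep the empirical $\hat D$, rather than $D^*$, as the denominator in the decomposition. Doing so puts the population ratio $Q^*$ in the coefficient, as the statement requires, and means that only the lower floor on $\hat D$ from $\mathcal E$ is used. No genuine obstacle is expected. The only delicate point is this choice of which denominator to factor out, so that the constants match.
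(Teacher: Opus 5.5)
Your proposal is correct and matches the paper's proof essentially step for step. You use the same decomposition $\hat Q-Q^*=\bigl[(\hat N-N^*)-Q^*(\hat D-D^*)\bigr]/\hat D$ together with the floor $\hat D\ge D_{\min}/2$, then the same ratio identity for $\hat N-N^*$ and $\hat D-D^*$ with $\hat f_j f\ge f_{\min}^2/2$, which produces the constant $2\cdot 2=4$.
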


Under \Cref{ass:pair}, the quantities $f$, $g_1$, $g_2$, and $Q^*$ are uniformly bounded on $Q_{R,\eta,\rho}$, so \Cref{prop:ratio-transfer} yields the simpler upper bound for $|\hat a(t,x;\xi)-a^*(t,x;\xi)|$ on $\mathcal E$,
\begin{equation}
\frac{C}{\Delta(t)}
\Big(
|\hat g_2(t,x;\xi)-g_2(t,x;\xi)|+|\hat f_2(\xi)-f(\xi)|
+|\hat g_1(t,x;\xi)-g_1(t,x;\xi)|+|\hat f_1(\xi)-f(\xi)|
\Big).
\end{equation}

\subsection{Finite-sample uniform bound}

Let $C_K>0$ be the entropy constant from Appendix~\ref{Appendix A} and define
\begin{equation}
\Lambda(h,\delta):=d\log\!\Big(\frac{C_K}{h}\Big)+\log\!\Big(\frac{2}{\delta}\Big),
\ 
\widetilde\Lambda(h,\delta):=d\log\!\Big(\frac{C_K}{h}\Big)+\log\!\Big(\frac{8}{\delta}\Big), \  r_{e,j}(\delta)
:= h_j^\beta
+ \sqrt{\frac{\widetilde\Lambda(h_j,\delta)}{M h_j^d}},
\end{equation}
for $j \in \{1,2\}$.
We call $(h_1,h_2)$ $\delta$-admissible if
$r_{e,1}(\delta)+r_{e,2}(\delta)\le c_0$.
\begin{theorem}[Uniform finite-sample bound]
\label{thm:finite}
Under \Cref{ass:kernel,ass:pair}, there exist constants $c_0,C>0$ such that, for every $\delta\in(0,1)$ and every $\delta$-admissible bandwidth pair $(h_1,h_2)$, with probability at least $1-\delta$,
\begin{equation}
\sup_{(t,x,\xi)\in Q_{R,\eta,\rho}}
|\hat a(t,x;\xi)-a^*(t,x;\xi)|
\le
\frac{C}{\eta}
\left[
h_1^\beta+h_2^\beta
+\sqrt{\frac{\Lambda(h_1,\delta)}{M h_1^d}}
+\sqrt{\frac{\Lambda(h_2,\delta)}{M h_2^d}}
\right].
\end{equation}
\end{theorem}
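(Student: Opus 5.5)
The plan is to combine \Cref{prop:ratio-transfer} with uniform deviation bounds for the four kernelized building blocks $\hat f_1,\hat f_2,\hat g_1,\hat g_2$, and to show that the event $\mathcal E$ has probability at least $1-\delta$ under admissibility. For each block I would split the error into a deterministic bias term and a stochastic term, for example $\hat g_j-g_j=(\E\hat g_j-g_j)+(\hat g_j-\E\hat g_j)$, and bound each uniformly over $Q_{R,\eta,\rho}$. The factor $1/\Delta(t)\le 1/\eta$ in \Cref{prop:ratio-transfer} gives the prefactor $C/\eta$. By \Cref{rem:bounded-support}, $f$, $g_1$, $g_2$ and $Q^*$ are uniformly bounded on $Q_{R,\eta,\rho}$, with $|g_2|\le C_YC_F\|f\|_\infty$ and $|Q^*|\le C_Y$, so the simplified form of the transfer bound applies.

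\textbf{Bias.} Write
\[
\E\hat g_1(t,x;\xi)=\int K(z)\int_B F(t,\xi,x,y)\,p(\xi+h_1z,y)\,dy\,dz .
\]
The weight $F$ depends on $\xi$ only through its fixed first argument and is not shifted, and $h_1\le\rho/R_K$ keeps $\xi+h_1z$ inside the neighbourhood of $B$ on which $\bar p(\cdot,y)$ is H\"older. A Taylor expansion of $\bar p(\cdot,y)$ to order $\ell_\beta$ then applies; the moment conditions of \Cref{ass:kernel} cancel the polynomial terms, and the remainder is bounded by $L_p\mu_\beta(K)h_1^\beta$ uniformly in $y$. Multiplying by $\sup|F|\le C_F$ and integrating over the bounded set $B$ gives a uniform bias of order $h_1^\beta$. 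The same argument with the extra factor $y$ (bounded by $C_Y$) handles $g_2$, and the argument with $F\equiv1$ handles $\hat f_j$.

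\textbf{Stochastic term, the main obstacle.} I need a Talagrand/Bernstein-type bound for the supremum of the empirical process indexed by
\[
\mathcal G_{h}=\{(\xi',y)\mapsto F(t,\xi,x,y)\,K\bigl((\xi'-\xi)/h\bigr):(t,x,\xi)\in Q_{R,\eta,\rho}\},
\]
together with its $y$-weighted analogue. This class has envelope $C_F\|K\|_\infty$ and variance $\lesssim h^d$, using the density bound on $\xi'$. The key point is to show that it is VC-type with covering numbers $(C_K/h)^{V}$ for a dimension-free exponent. I would obtain this by noting the following:
\begin{itemize}
\item $K((\cdot-\xi)/h)$ is Lipschitz in $\xi$ with constant $L_K/h$.
\item On the compact set $[s,u-\eta]\times B_R\times B$, the map $(t,x,\xi)\mapsto F$ is Lipschitz in $(t,x,\xi)$, uniformly in $y\in B$, with constants depending on $\eta,R,B$.
\end{itemize}
An $\epsilon h$-grid over the compact parameter set therefore induces an $\epsilon$-net in sup norm. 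This is the "weighted class retains the entropy order of translated kernels" step. The entropy constant $C_K$ absorbs the parameter dimension; I would treat the extra $t,x$ coordinates as contributing only to constants, or else accept a larger logarithmic multiple of $d$ absorbed into $C$.

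Bousquet's inequality plus a Dudley/Gin\'e--Guillou bound on the expected supremum then give, with probability $1-\delta/4$ per block,
\[
\sup|\hat g_j-\E\hat g_j|\lesssim\sqrt{\frac{\widetilde\Lambda(h_j,\delta)}{Mh_j^d}}+\frac{\widetilde\Lambda(h_j,\delta)}{Mh_j^d}.
\]
Under admissibility $\sqrt{\widetilde\Lambda/(Mh^d)}\le c_0\le1$, so the linear term is dominated by the square-root term. The $\log(8/\delta)$ inside $\widetilde\Lambda$ comes from the union over the four blocks. Since $\widetilde\Lambda\le C\,\Lambda$, the final bound can be written with $\Lambda$.

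\textbf{Localization of $\mathcal E$ and conclusion.} On the intersection of the four events,
\[
\sup|\hat f_j-f|\le C\,r_{e,j}(\delta)\le Cc_0\le f_{\min}/2
\]
once $c_0$ is small, giving $\hat f_j\ge f_{\min}/2$. Next, $\hat D=\hat g_1/\hat f_1$ and $D^*=g_1/f$, and the ratio error is controlled by $C(r_{e,1})\le Cc_0$. Combined with $D^*\ge 2D_{\min}$ from \Cref{ass:pair}, this gives $\hat D\ge D_{\min}$, which exceeds $D_{\min}/2$ for $c_0$ small. Hence the event is contained in $\mathcal E$. Plugging the bias and stochastic bounds into the simplified consequence of \Cref{prop:ratio-transfer} and using $\Delta(t)\ge\eta$ yields the stated inequality with probability at least $1-\delta$.
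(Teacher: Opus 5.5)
Your proposal is correct and follows the paper's proof (Theorem~\ref{thm:drift} in Appendix~\ref{app:kde}) essentially step for step. The common steps are blockwise bias/stochastic splits as in Lemmas~\ref{lem:kde}--\ref{lem:g2}, a $\delta/4$ union bound producing $\widetilde\Lambda$, absorption of the linear Bousquet term via admissibility, localization of $\mathcal E$ through the $\hat f_j$ and $\hat D$ floors, and finally \Cref{prop:ratio-transfer} with $\Delta(t)\ge\eta$. The only deviation is cosmetic: you cover the weighted class in sup norm over the compact parameter set with $x\in B_R$, which makes the paper's exponential-decay truncation for $x\in\mathbb R^d$ unnecessary, at the price of an entropy exponent of order $2d+1$ instead of $d$ that, as you note, is absorbed into $C_K$, $c_0$ and $C$.
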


The key difficulty is that the SB weight is not a standard bounded regression weight: it contains both a contracting Gaussian and an expanding exponential. Our proof handles this by factorizing $F(t,\xi,x,y)=\chi(t,x,y)G(\xi,y)$ with $\chi(t,x,y):=e^{-|y-x|^2/(2\Delta(t))}\in[0,1]$ and $G(\xi,y):=e^{|y-\xi|^2/(2\Delta)}\le C_F$ on $B\times B$. Bounded support controls the expanding term, while the contraction term yields both Lipschitz control in the parameters and decay in the unbounded state variable $x$. As a result, the weighted empirical-process class has the same entropy order as the classical translated-kernel class $\{z\mapsto K_h(z-\xi):\xi\in B_\rho\}$.

\begin{corollary}[Oracle bandwidth]
\label{cor:rate}
If $h_1=h_2=h^\star:=\bigl((d\log(C_K M)+\log(2/\delta))/M\bigr)^{1/(2\beta+d)}$, then for each fixed $\delta\in(0,1)$ there exists $M_0(\delta)\ge 1$, depending on $\delta$ and the model constants, such that for all $M\ge M_0(\delta)$ the pair $(h^\star,h^\star)$ is $\delta$-admissible and, with probability at least $1-\delta$,
\begin{equation}
\sup_{(t,x,\xi)\in Q_{R,\eta,\rho}}
| \hat a(t,x;\xi)-a^*(t,x;\xi) |
\le
\frac{C}{\eta}
\left(
\frac{d\log(C_K M)+\log(2/\delta)}{M}
\right)^{\beta/(2\beta+d)}.
\end{equation}
In particular, for $\delta=M^{-\gamma}$ with fixed $\gamma>0$,
\begin{equation}
\sup_{(t,x,\xi)\in Q_{R,\eta,\rho}} |\hat a-a^*|
=
O_{\P}\!\left(\Big(\tfrac{\log M}{M}\Big)^{\beta/(2\beta+d)}\right).
\end{equation}
\end{corollary}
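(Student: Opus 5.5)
The plan is to specialize \Cref{thm:finite} to the diagonal choice $h_1=h_2=h^\star$ and check three things: admissibility of $(h^\star,h^\star)$ for large $M$, that the bias and stochastic terms balance at $h^\star$, and that the logarithmic factor $\Lambda(h^\star,\delta)$ is controlled by the numerator $A_M:=d\log(C_KM)+\log(2/\delta)$ used to define $h^\star$. Write $h^\star=(A_M/M)^{1/(2\beta+d)}$, so that $(h^\star)^\beta=(A_M/M)^{\beta/(2\beta+d)}$ and $M(h^\star)^d=M^{2\beta/(2\beta+d)}A_M^{d/(2\beta+d)}$.

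\textbf{Step 1 (logarithmic comparison).} I would show $\Lambda(h^\star,\delta)\le A_M$ as soon as $h^\star\ge 1/M$. Indeed, then $d\log(C_K/h^\star)\le d\log(C_KM)$, and the $\log(2/\delta)$ parts coincide. The condition $h^\star\ge 1/M$ is equivalent to $A_M\ge M^{-(2\beta+2d-1)}\cdot M^{0}$; more simply, $A_M/M\ge M^{-(2\beta+d)}$ holds once $A_M\ge 1$, which is true for $M$ large since $C_K$ may be taken $\ge1$ (enlarging the entropy constant is harmless). Similarly, $\widetilde\Lambda(h^\star,\delta)\le A_M+\log 4$.

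\textbf{Step 2 (rate balance).} With Step 1,
\[
\sqrt{\frac{\Lambda(h^\star,\delta)}{M(h^\star)^d}}\le\sqrt{\frac{A_M}{M(h^\star)^d}}=\Big(\frac{A_M}{M}\Big)^{\beta/(2\beta+d)},
\]
which equals the bias term $(h^\star)^\beta$. Inserting these bounds into \Cref{thm:finite} gives $4(C/\eta)(A_M/M)^{\beta/(2\beta+d)}$, and the factor $4$ is absorbed into $C$.

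\textbf{Step 3 (admissibility).} By the same computation, $r_{e,j}(\delta)\le (A_M/M)^{\beta/(2\beta+d)}+\sqrt{(A_M+\log4)/A_M}\,(A_M/M)^{\beta/(2\beta+d)}\le 3(A_M/M)^{\beta/(2\beta+d)}$. For fixed $\delta$ we have $A_M/M\to0$, so there is $M_0(\delta)$ with $6(A_M/M)^{\beta/(2\beta+d)}\le c_0$ for $M\ge M_0(\delta)$, together with $h^\star\le\rho/R_K$ and $A_M\ge1$. This is the only place $M_0$ depends on $\delta$.

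\textbf{Step 4 (the $O_{\P}$ statement).} For $\delta=M^{-\gamma}$, $A_M=d\log(C_KM)+\log2+\gamma\log M\asymp\log M$, so the bound becomes $C'\eta^{-1}(\log M/M)^{\beta/(2\beta+d)}$ with probability $1-M^{-\gamma}\to1$. Admissibility for large $M$ again follows because $\log M/M\to0$, so no fixed-$\delta$ $M_0$ is needed. This yields $O_{\P}$.

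The only delicate point is Step 1: the comparison of $\Lambda(h^\star,\delta)$ with $A_M$, which requires the lower bound $h^\star\gtrsim 1/M$ and a normalization $C_K\ge1$. Everything else is arithmetic.
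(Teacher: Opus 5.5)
Your proposal is correct and follows essentially the same route as the paper. You bound $\log(C_K/h^\star)$ by $\log(C_KM)$ so that the stochastic term at $h^\star$ matches the bias $(A_M/M)^{\beta/(2\beta+d)}$, deduce $\delta$-admissibility for large $M$ from $A_M/M\to 0$, plug into \Cref{thm:finite}, and then specialize to $\delta=M^{-\gamma}$. Your bookkeeping is slightly more careful than the paper's on three points: the condition $h^\star\ge 1/M$, the constraint $h^\star\le\rho/R_K$, and the fact that $c_0$ does not depend on $\delta$, so admissibility persists for $\delta=M^{-\gamma}$. There is one exponent slip in Step~1: $h^\star\ge 1/M$ is equivalent to $A_M\ge M^{-(2\beta+d-1)}$, not $M^{-(2\beta+2d-1)}$. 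Your simpler sufficient condition $A_M\ge 1$ is correct as stated.
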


\begin{remark}[Finite observation grids]
For a grid $0<t_1<\cdots<t_N=T$ with first-order Markov observations, construct
the interval-specific estimator $\hat a_i$ from the pairs
$(X_{t_i},X_{t_{i+1}})$. If the interval laws satisfy \Cref{ass:pair} with constants and support set uniform in $i$, Theorem~\ref{thm:finite} applies on each interval with the same
constants. Applying it with confidence level $\delta/(N-1)$ and taking a union
bound gives the simultaneous intervalwise bound with
\begin{equation}
\Lambda_N(h,\delta)
=
d\log\!\left(\frac{C_K}{h}\right)
+
\log\!\left(\frac{2(N-1)}{\delta}\right).
\end{equation}
Thus finite-grid extensions cost only the logarithmic factor $\log(N-1)$.
\end{remark}

\begin{remark}[Time-edge singularity]
\label{rem:eta}
The factor $1/\eta$ reflects the intrinsic singularity of the SB drift as $t\uparrow u$. The statistical rate in $M$ remains the classical kernel rate up to logarithmic factors.
\end{remark}

\begin{remark}[Diagonal specialization]
\label{rem:diagonal-specialization}
The estimator \eqref{eq:adrift-hat} and \Cref{thm:finite} allow different bandwidths $h_1$ and $h_2$, but in \Cref{sec:clt,sec:adaptive} we specialize to the diagonal choice $h_1=h_2=h$ so that inference and adaptation are one-dimensional and match the experimental protocol. The unequal-bandwidth versions are given in Appendix~\ref{app:clt} to~\ref{app:lower}.
\end{remark}

\subsection{Pointwise asymptotic normality}
\label{sec:clt}

In the main text we specialize to $h_1=h_2=h$; the unequal-bandwidth statement is given in Appendix~\ref{app:clt}. Let $R(K):=\int_{\R^d}K(z)^2\ dz$. For fixed $(t,x,\xi)\in[s,u-\eta]\times B_R\times\operatorname{int}(B)$, define
\begin{equation}
\psi_{t,x,\xi}(y):=\bigl(y-x-\Delta(t)a^*(t,x;\xi)\bigr)F(t,\xi,x,y),
\quad 
\Sigma(t,x;\xi):=
\frac{R(K)}{f(\xi)}
\frac{\Var(\psi_{t,x,\xi}(X_u)\mid X_s=\xi)}{\Delta(t)^2D^*(t,x;\xi)^2}.
\end{equation}

\begin{theorem}[Pointwise CLT]
\label{thm:clt}
Assume \Cref{ass:kernel,ass:pair}, and let $h_1=h_2=h\to0$ satisfy $Mh^d\to\infty$ and $h^\beta\sqrt{Mh^d}\to0$. Then for every fixed $(t,x,\xi)\in[s,u-\eta]\times B_R\times\operatorname{int}(B)$,
\begin{equation}
\sqrt{Mh^d}\ \bigl(\hat a(t,x;\xi)-a^*(t,x;\xi)\bigr)\Rightarrow \mathcal N\!\bigl(0,\Sigma(t,x;\xi)\bigr).
\end{equation}
\end{theorem}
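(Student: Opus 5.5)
The plan is a linearization of the ratio followed by a Lindeberg CLT for a single i.i.d.\ triangular array. With $h_1=h_2=h$ we have $\hat f_1=\hat f_2=:\hat f$, so the $\hat f$ factors cancel:
\begin{equation}
\frac{\hat N}{\hat D}=\frac{\hat g_2}{\hat g_1}.
\end{equation}
Write $Q^*=g_2/g_1$. Then
\begin{equation}
\Delta(t)\bigl(\hat a-a^*\bigr)=\frac{\hat g_2}{\hat g_1}-\frac{g_2}{g_1}
=\frac{\hat g_2-Q^*\hat g_1}{\hat g_1}.
\end{equation}
Note that $Q^*=x+\Delta(t)a^*$, so the numerator is exactly
\begin{equation}
\hat g_2-Q^*\hat g_1=\frac1M\sum_{m=1}^M Z_{m},\qquad Z_{m}:=\psi_{t,x,\xi}(X_u^{(m)})\,K_h(X_s^{(m)}-\xi).
\end{equation}
This holds with no remainder. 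The whole problem therefore reduces to two things: a CLT for $\frac1M\sum_m Z_m$, and consistency of the denominator $\hat g_1\to g_1=f(\xi)D^*$.

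\textbf{Step 1: bias.} Set $\phi(\xi'):=\int_B\psi_{t,x,\xi}(y)p(\xi',y)\,dy$. Then $\phi(\xi)=g_2-Q^*g_1=0$, and
\begin{equation}
\E Z_m=\int K(z)\phi(\xi+hz)\,dz.
\end{equation}
Since $\psi$ is bounded on $B$ (by $C_F$, $C_Y$, $R$) and $\bar p(\cdot,y)\in\Sigma(\beta,L_p)$ uniformly in $y$, the map $\phi$ is $(\beta,L)$-Hölder near the interior point $\xi$. The moment conditions of \Cref{ass:kernel} then give $|\E Z_m|\le L\mu_\beta(K)h^\beta$. Hence
\begin{equation}
\sqrt{Mh^d}\,\E Z_m=O\bigl(h^\beta\sqrt{Mh^d}\bigr)\to0
\end{equation}
by undersmoothing. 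Note that $\psi$ depends on $\xi$ only through $F$, with $\xi$ held fixed as the query point, so there is no issue with $\xi$ appearing in both places.

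\textbf{Step 2: variance and Lindeberg.} We compute
\begin{equation}
h^d\E Z_m^2=\int K(z)^2\,v(\xi+hz)\,dz,\qquad v(\xi'):=\int\psi^2p(\xi',y)\,dy.
\end{equation}
By continuity, $v(\xi+hz)\to v(\xi)=f(\xi)\Var(\psi(X_u)\mid X_s=\xi)$; the conditional mean term vanishes because $\phi(\xi)=0$. Also $h^d(\E Z_m)^2=O(h^{d+2\beta})\to0$. Therefore
\begin{equation}
h^d\Var Z_m\to R(K)\,f(\xi)\Var(\psi\mid X_s=\xi).
\end{equation}
The summands are bounded by $C h^{-d}$. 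The Lyapunov ratio with third moments is then $O\bigl((Mh^d)^{-1/2}\bigr)\to0$, so the Lindeberg--Feller theorem applies to the triangular array
\begin{equation}
\sqrt{h^d/M}\,\sum_m\bigl(Z_m-\E Z_m\bigr).
\end{equation}

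\textbf{Step 3: denominator and Slutsky.} The same bias and variance computation, applied to $F\,K_h$, gives $\E\hat g_1=g_1+O(h^\beta)$ and $\Var\hat g_1=O\bigl((Mh^d)^{-1}\bigr)$. Hence $\hat g_1\to_{\P} f(\xi)D^*(t,x;\xi)$, which is positive by \Cref{ass:pair}. By Slutsky,
\begin{equation}
\sqrt{Mh^d}\,(\hat a-a^*)\Rightarrow\mathcal N\Bigl(0,\ \frac{R(K)f(\xi)\Var(\psi\mid\xi)}{\Delta(t)^2f(\xi)^2D^{*2}}\Bigr),
\end{equation}
and the limiting variance is exactly $\Sigma$. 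Since $\hat g_1$ is eventually positive with probability tending to one, the floor convention does not affect the limit.

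\textbf{Main obstacle.} The only delicate point is Step 1. One must check that the Hölder regularity of $\xi'\mapsto p(\xi',y)$ transfers to $\phi$ uniformly after integrating against the bounded, $\xi'$-independent weight $\psi$. For $\beta>1$ this requires differentiating under the integral using the uniform-in-$y$ Hölder bounds on $\bar p$, which hold on a neighborhood of $B$ since $\xi$ is interior and $h$ is small. If the Hölder class is instead defined via a Taylor-remainder bound, I would apply that bound directly inside the $y$-integral and use dominated convergence.
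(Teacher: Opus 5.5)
Your argument is correct, and it takes a genuinely different and more economical route than the paper.

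The paper obtains \Cref{thm:clt} as the $\rho=1$ case of \Cref{thm:clt-unequal}. It proves a joint Lindeberg--Feller CLT for the $(d+3)$-vector $\hat T=(\hat g_2^\top,\hat g_1,\hat f_1,\hat f_2)^\top$ with block covariance $\Omega_\rho$, transfers the bias through the $C^1$ map $\Gamma_t$, and applies the multivariate delta method. It then asserts in \Cref{rem:clt-diagonal} that $J_\Gamma\Omega_1J_\Gamma^\top$ simplifies to $\Sigma$.

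You instead exploit the diagonal structure from the start. The factor $\hat f_1=\hat f_2$ cancels, and the exact identity $\hat g_2/\hat g_1-Q^*=(\hat g_2-Q^*\hat g_1)/\hat g_1$ writes the error as a single kernel average of the residual $\psi_{t,x,\xi}(X_u)K_h(X_s-\xi)$ divided by a consistent denominator. As a result there is no delta method and no remainder. The bias becomes one H\"older-smoothing estimate for a function $\phi$ with $\phi(\xi)=0$. The conditional variance of $\psi_{t,x,\xi}$ appears directly, already centered because $\E[\psi_{t,x,\xi}(X_u)\mid X_s=\xi]=0$.

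In effect you supply the computation behind the paper's ``direct simplification''. In $J_\Gamma\Omega_1J_\Gamma^\top$ the $\hat f$-blocks drop out because, at $\rho=1$, the third and fourth rows (and columns) of $\Omega_1$ coincide, while $J_\Gamma$ weights them by the opposite amounts $\pm Q^*/(\Delta_t f(\xi))$. That is the covariance-level counterpart of your exact cancellation.

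What the paper's heavier machinery buys is the unequal-bandwidth case $h_2/h_1\to\rho\neq1$. There $\hat f_1/\hat f_2-1$ fluctuates at the same $(Mh_1^d)^{-1/2}$ scale, which produces the $R_\rho(K)$ cross-terms. So the density estimators genuinely enter the limit, and a joint vector CLT cannot be avoided.

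Two minor points:
\begin{itemize}
\item Since $Z_m\in\R^d$, Step~2 should use $\E[Z_mZ_m^\top]$ and the multivariate Lindeberg--Feller theorem (or Cram\'er--Wold), reading $\Var(\psi\mid X_s=\xi)$ as a covariance matrix, as the paper does.
\item Your closing remark on the floor convention is unnecessary, because the theorem concerns the raw estimator \eqref{eq:adrift-hat}. Moreover, for the uniform floor event of Appendix~\ref{app:adaptive}, it would not follow from pointwise positivity of $\hat g_1$ alone.
\end{itemize}
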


\begin{remark}[Undersmoothing is genuinely required]
\label{rem:undersmoothing}
The condition $h^\beta\sqrt{Mh^d}\to0$ is the scaled-bias requirement for the CLT. Taking only a smaller constant multiple of the rate-optimal bandwidth does not suffice as it preserves the same exponent and therefore does not make the scaled bias vanish.
\end{remark}

\subsection{Adaptive oracle inequality and minimax optimality}
\label{sec:adaptive}

Again following \Cref{rem:diagonal-specialization}, we specialize in the main text to $h_1=h_2=h$. Let $\mathcal H=\{h_0 2^{-k}\}_{k=0}^{K_M}\subset(0,1]$ be a finite geometric bandwidth grid with largest element $h_0\asymp1$ and smallest element of order $M^{-1/d}$. For $h\in\mathcal H$, let $\hat a_h$ denote \eqref{eq:adrift-hat} with $h_1=h_2=h$, and define
\begin{equation}
\mathcal R_\nu(\hat a_h):=
\sup_{(t,x,\xi)\in Q_{R,\eta,\rho}}
\E_\nu\!\bigl[|\hat a_h(t,x;\xi)-a^*(t,x;\xi)|\bigr].
\end{equation}
Let $\mathfrak P(\beta,L_p,f_{\min},B)$ denote the class of pair laws satisfying \Cref{ass:pair} with the displayed constants.

\begin{theorem}[Oracle inequality]
\label{thm:oracle}
Under \Cref{ass:kernel,ass:pair}, there exist constants $M_0\ge 1$ and $C_1,C_2>0$ such that, for
all $M\ge M_0$, the diagonal GL/Lepski selector
$\widehat h\in\mathcal H$ described in Appendix \ref{app:adaptive} satisfies
\begin{equation}
\sup_{\nu\in \mathfrak P(\beta,L_p,f_{\min},B)}
\mathcal R_\nu(\hat a_{\widehat h})
\le
C_1 \inf_{h\in \mathcal H}
\left(
h^\beta+\sqrt{\frac{\log M}{M h^d}}+\frac{\log M}{M h^d}
\right)
+ C_2 M^{-1}.
\end{equation}
\end{theorem}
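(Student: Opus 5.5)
We follow the standard Goldenshluger--Lepski scheme, using \Cref{thm:finite} and \Cref{prop:ratio-transfer} as the building blocks. For $h\in\mathcal H$ we define the \emph{majorant} $V(h):=\kappa\bigl(\sqrt{\log M/(Mh^d)}+\log M/(Mh^d)\bigr)/\eta$, with $\kappa$ a large model constant. We also define the comparison statistic
\[
A(h):=\max_{h'\in\mathcal H,\,h'\le h}\Bigl[\sup_{Q_{R,\eta,\rho}}|\hat a_{h}-\hat a_{h'}|-V(h')\Bigr]_+ .
\]
The selector is $\widehat h\in\arg\min_{h\in\mathcal H}\{A(h)+V(h)\}$. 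We take this to be the selector of Appendix~\ref{app:adaptive}; a Lepski variant with nested comparisons would be handled identically.

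\textbf{Step 1: a good event.} Fix $\nu\in\mathfrak P$. We intersect over all $h\in\mathcal H$, of which there are $K_M+1=O(\log M)$, the events of \Cref{thm:finite} at level $\delta=M^{-2}/(K_M+1)$. On the resulting event $\Omega_M$, which has probability at least $1-M^{-2}$, every $h\in\mathcal H$ obeys the stochastic bound
\[
\sup|\hat a_h-\E$-type proxy$|\le V(h)/2
\]
together with the bias bound $b(h)\le Ch^\beta/\eta$. This requires splitting the error of \Cref{thm:finite} into a deterministic part and a stochastic part. Concretely, we linearize via \Cref{prop:ratio-transfer} around the population smoothed quantities $\bar a_h:=\Delta(t)^{-1}(\E\hat g_2\,\E\hat f_1/(\E\hat f_2\,\E\hat g_1)-x)$.

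\emph{Admissibility.} For small $h$, near $M^{-1/d}$, the pair $(h,h)$ is not $\delta$-admissible, so \Cref{thm:finite} does not apply there. For these $h$ we instead use the floor convention of Appendix~\ref{app:adaptive}: the estimator is truncated with denominators $\ge f_{\min}/2$ and $\ge D_{\min}/2$. With truncation, $|\hat a_h-\bar a_h|\lesssim \eta^{-1}(\text{linear error}+\text{linear error}^2)$, and Bernstein's inequality controls the quadratic term at order $\log M/(Mh^d)$. This is exactly why the term $\log M/(Mh^d)$ appears in the majorant.

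\textbf{Step 2: the GL argument on $\Omega_M$.} For any $h\in\mathcal H$ we use the triangle inequality
\[
|\hat a_{\widehat h}-a^*|\le|\hat a_{\widehat h}-\hat a_{\max(h,\widehat h)}|+|\hat a_{\max(h,\widehat h)}-\hat a_h|+|\hat a_h-a^*|
\]
with the usual bookkeeping. The first two terms are bounded by $A(h)+V(\widehat h)+A(\widehat h)+V(h)$, and the minimizing property of $\widehat h$ gives $\le 2(A(h)+V(h))$. Next, $A(h)\le \max_{h'\le h}\sup|\bar a_h-\bar a_{h'}|\lesssim h^\beta/\eta$ on $\Omega_M$. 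This holds because the smoothed proxies differ from $a^*$ by at most $Ch^\beta/\eta$ uniformly, and this bias bound is monotone in the grid. Together these give $\sup_Q|\hat a_{\widehat h}-a^*|\le C_1(h^\beta+\sqrt{\log M/(Mh^d)}+\log M/(Mh^d))$ on $\Omega_M$ for every $h$; we then take the infimum.

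\textbf{Step 3: complement and uniformity.} Off $\Omega_M$, the floor convention makes $|\hat a_h|\le C/\eta$ uniformly on $Q_{R,\eta,\rho}$: the numerator is bounded by $C_YC_F\|K\|_\infty$ over the floored denominators. Moreover, $|a^*|\le C/\eta$. So the complement contributes at most $C M^{-2}\le C_2M^{-1}$ to the expectation. Since we bound the expectation pointwise by the sup on $\Omega_M$, taking $\sup_{(t,x,\xi)}$ of $\E$ is harmless. All constants depend only on $(\beta,L_p,f_{\min},B,R,\eta,\rho,K)$ through $C_F$, $C_Y$, $D_{\min}$ and the entropy constant, which gives uniformity over $\nu\in\mathfrak P$.

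\textbf{Main obstacle.} The delicate point is Step 1: producing a \emph{bias/variance decomposition} of the nonlinear ratio estimator, uniform over the whole grid including the non-admissible small bandwidths, with a variance majorant $V(h)$ free of $\nu$. I would first try to handle the non-admissible bandwidths by truncation plus a second-order expansion of the ratio. If that proves too loose, the fallback is to restrict the comparison set to admissible $h$, whose smallest element is of order $(\log M/M)^{1/d}$. This costs nothing in the infimum, since non-admissible $h$ have $\log M/(Mh^d)\gtrsim 1$, making the right-hand side trivially of constant order.
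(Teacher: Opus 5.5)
\paragraph{Different route: GL on the drift instead of on the blocks.} Your GL bookkeeping is sound: the union over the $O(\log M)$ grid at level $\approx M^{-2}$, the two-case triangle inequality, the use of minimality, and the floor convention off the good event all work. But you run the comparison at a different level than the paper, and that choice is what creates your ``main obstacle''. The selector of Appendix~\ref{app:adaptive} never compares drift estimates. It compares the \emph{linear} building blocks through $\mathrm{Dev}_j(h',h)=\max\{\sup_{Q_{R,\eta,\rho}}|\hat g_j^{\,h}-\hat g_j^{\,h'}|,\ \sup_{B_\rho}|\hat f_j^{\,h}-\hat f_j^{\,h'}|\}$, with penalties built from $v_j(h)=B_{j,1}\sqrt{\Lambda_{\mathcal H}(h,\delta)/(Mh^d)}+B_{j,2}\Lambda_{\mathcal H}(h,\delta)/(Mh^d)$. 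Your rule based on $\sup_Q|\hat a_h-\hat a_{h'}|$ is the uniform analogue of the experimental proxy of Appendix~\ref{app:exp-bandwidths}. Strictly, you are therefore proving the inequality for a different selector than the one the statement names.

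At the block level, \Cref{lem:kde,lem:g1,lem:g2} give $E_j(h)\le b_j(h)+v_j(h)$ at \emph{every} grid bandwidth. No admissibility is needed, and the $\Lambda/(Mh^d)$ term is simply the envelope term of Bousquet's inequality, not a second-order ratio term. GL then yields $E_j(\hat h_j)\le C\inf_u\{b_j(u)+v_j(u)\}\to 0$. Hence for $M\ge M_0$ the floor event holds at the \emph{selected} bandwidth on $\Omega$, and \Cref{prop:ratio-transfer} is applied only once. The paper's route buys linearity and the option of decoupled $\hat h_1\neq\hat h_2$. Yours buys a selector that targets the actual drift loss.

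\paragraph{Your Step~1 can be closed more simply.} You do not need the smoothed proxy $\bar a_h$, which need not even be well defined for $h\asymp 1$ with a signed kernel, nor the fallback restriction to admissible $h$. GL only needs, on one event, $\sup_Q|\hat a_h-a^*|\le b(h)+V(h)/2$ for all $h\in\mathcal H$, with $b$ increasing and $V$ decreasing. Then, for $h'\le h$, $\sup_Q|\hat a_h-\hat a_{h'}|-V(h')\le b(h)+b(h')$, which gives $A(h)\le 2b(h)$. This total bound holds at every bandwidth:
\begin{itemize}
\item On $\mathcal E(h,h)$, \Cref{prop:ratio-transfer} gives an error $\lesssim (E_1(h)+E_2(h))/\eta$.
\item If $\mathcal E(h,h)$ fails, then either some $E_j(h)>f_{\min}/2$, or, by \Cref{lem:denom-stability}, $E_1(h)$ exceeds a model constant. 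So on the block event $b_1(h)+v_1(h)+b_2(h)+v_2(h)\gtrsim 1$, and the error $\sup_Q|a^*|\lesssim 1/\eta$ of the zero estimator is absorbed.
\end{itemize}

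\paragraph{Step~3 uses a wrong envelope.} $\hat g_2$ is bounded by $C_YC_F\|K\|_\infty h^{-d}$, not by $C_YC_F\|K\|_\infty$. With a signed kernel, which is unavoidable when $\ell_\beta\ge 2$, the floored ratio $\hat g_2/\hat g_1$ can in the worst case be of order $h^{-d}\lesssim M$. So the claim $|\hat a_h|\le C/\eta$ off $\Omega_M$ is false. The correct deterministic bound is $C(1+M)$, as in the paper. The complement then contributes $CM\cdot M^{-2}=CM^{-1}$, which is exactly why the statement carries $C_2M^{-1}$. Your final bound survives only because you already took $\mathbb{P}(\Omega_M^c)\le M^{-2}$; the justification must be changed.
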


\begin{corollary}[Adaptive minimax optimality]
\label{cor:adaptive}
If $\mathcal H=\{h_0 2^{-k}: k=0,1,\ldots,K_M\}$, with $h_0\asymp 1$ and smallest element of order
$M^{-1/d}$, then for all sufficiently large $M$,
\begin{equation}
\sup_{\nu\in \mathfrak P(\beta,L_p,f_{\min},B)}
\mathcal R_\nu(\hat a_{\widehat h})
\lesssim
\left(\frac{\log M}{M}\right)^{\beta/(2\beta+d)} .
\end{equation}

Moreover, for every interior pivot law $\nu_0\in \mathfrak P(\beta,L_p,f_{\min},B)$ satisfying the slack conditions of \Cref{prop:lower}, there
exist constants $Q_0>0$, $c_{\rm loc}>0$, and $c>0$ such that, for all large $M$,
\begin{equation}
\inf_{\widetilde a} \sup_{\nu\in \mathfrak P_{\mathrm{loc},M}^{\cap} (\nu_0;Q_0,c_{\rm loc}M^{-\beta/(2\beta+d)})} \mathcal R_\nu(\widetilde a)
\ge c M^{-\beta/(2\beta+d)} .
\end{equation}
Thus the adaptive rate is locally sharp around every such interior pivot.

If, in addition, the compatibility conditions of \Cref{cor:global-minimax-explicit-pivot} hold, then the same local construction,
instantiated at the explicit uniform pivot, yields the global minimax lower bound
\begin{equation}
\inf_{\widetilde a}
\sup_{\nu\in \mathfrak P(\beta,L_p,f_{\min},B)}
\mathcal R_\nu(\widetilde a)
\gtrsim
M^{-\beta/(2\beta+d)} .
\end{equation}
Thus,
$\hat a_{\widehat h}$
is globally minimax-rate optimal over
$\mathfrak P(\beta,L_p,f_{\min},B)$, up to logarithmic factors.
\end{corollary}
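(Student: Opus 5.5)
The corollary has three parts, and I will treat them in order: the adaptive upper bound, the pivot-local lower bound, and the global lower bound.

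\emph{Upper bound.} I start from \Cref{thm:oracle} and simply evaluate its infimum on the geometric grid $\mathcal H$. Set
\begin{equation}
h_M:=(\log M/M)^{1/(2\beta+d)}.
\end{equation}
For large $M$ we have $M^{-1/d}\ll h_M\ll h_0$, so $h_M$ lies inside the range of $\mathcal H$. Because consecutive grid points differ by a factor of two, there is some $\bar h\in\mathcal H$ with $h_M/2\le\bar h\le h_M$.

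At $\bar h$ the three terms of the oracle bound behave as follows:
\begin{itemize}
\item $\bar h^\beta\le h_M^\beta=(\log M/M)^{\beta/(2\beta+d)}$;
\item $\sqrt{\log M/(M\bar h^d)}\le 2^{d/2}\sqrt{\log M/(Mh_M^d)}=2^{d/2}(\log M/M)^{\beta/(2\beta+d)}$;
\item $\log M/(M\bar h^d)$ is the square of the previous term, hence of smaller order since that term tends to zero.
\end{itemize}
Finally, $C_2M^{-1}=o\bigl((\log M/M)^{\beta/(2\beta+d)}\bigr)$ because $\beta/(2\beta+d)<1$. Taking the supremum over $\nu$, which is already uniform in \Cref{thm:oracle}, gives the first display.

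\emph{Local lower bound.} This part is a direct restatement of \Cref{prop:lower}, a two-point Le Cam bound stated later in the paper. I would invoke it as follows.
\begin{itemize}
\item Perturb the conditional law of $X_u$ given $X_s$ around $\nu_0$ by a bump $\varepsilon h^\beta\phi((\xi-\xi_0)/h)\psi(y)$ with $h\asymp M^{-1/(2\beta+d)}$.
\item Choose $\psi$ with $\int\psi=0$, so normalization is preserved, and with $\int y\,F\,\psi\neq0$, so the drift at the query point moves by order $h^\beta$.
\item The slack conditions on $\nu_0$ keep the perturbed law inside $\mathfrak P$: the H\"older norm, the density floor $f_{\min}$, and the denominator floor all stay satisfied.
\item The perturbed law also lies in the localized class $\mathfrak P^{\cap}_{\mathrm{loc},M}(\nu_0;Q_0,c_{\rm loc}M^{-\beta/(2\beta+d)})$.
\item The $\chi^2$ or KL divergence between the $M$-fold product measures is $O(M\varepsilon^2h^{2\beta+d})=O(\varepsilon^2)$, which is bounded.
\end{itemize}
Le Cam's lemma then gives a risk of at least $c\,h^\beta\asymp M^{-\beta/(2\beta+d)}$ at a fixed point. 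This bounds the supremum over $Q_{R,\eta,\rho}$ defining $\mathcal R_\nu$ from below.

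\emph{Global lower bound.} Under the compatibility conditions of \Cref{cor:global-minimax-explicit-pivot}, the uniform law on $B\times B$ is an interior pivot satisfying the slack conditions. Its localized class sits inside $\mathfrak P(\beta,L_p,f_{\min},B)$, so the supremum over the whole class dominates the local supremum and the local bound transfers. Comparing with the upper bound gives optimality up to the factor $(\log M)^{\beta/(2\beta+d)}$.

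The only step needing care is the first: checking that the grid actually brackets $h_M$ for large $M$. Since $h_M\to0$ and $h_M/M^{-1/d}\to\infty$, this holds. Everything else is assembly of earlier results.
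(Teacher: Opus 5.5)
Your proposal is correct and follows the paper's route. The upper bound is exactly the argument of \Cref{cor:adaptive-unequal} in its diagonal form: evaluate the oracle inequality of \Cref{thm:oracle} at a dyadic grid point within a factor $2$ of $(\log M/M)^{1/(2\beta+d)}$, and note that the linear term and $M^{-1}$ are lower order. Both lower bounds are obtained, as in the paper, by invoking \Cref{prop:lower} and its instantiation at the uniform pivot in \Cref{cor:global-minimax-explicit-pivot}.

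One caution about your side sketch of the two-point construction, though it does not affect the proof since you cite \Cref{prop:lower}. First, $\int yF\psi\neq0$ does not by itself move the ratio $g_2/g_1$, because the denominator shifts too; what is needed is $\int (y-Q^*)F^\circ\psi\,dy\neq0$ at the query point. Second, an additive bump need not stay nonnegative where $p_0$ vanishes. The paper avoids both issues by taking $\varphi\perp\operatorname{span}\{1,F^\circ\}$ in $L^2(p_0(\xi^\circ,\cdot)\,dy)$ and perturbing multiplicatively, $p_0(1\pm u_h)$, with a global renormalization that cancels in the ratio.
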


\begin{remark}[Oracle interpretation and minimax interpretation]
The adaptive theorem controls the estimator up to multiplicative constants and lower-order terms.
Thus it supports rate-level competitiveness with oracle tuning, but not finite-sample equality with the
oracle bandwidth.

The lower-bound argument has two layers. \Cref{prop:lower} is a pivot-local Le Cam construction:
around any interior law satisfying the slack conditions, no estimator can improve on the rate
\(M^{-\beta/(2\beta+d)}\) for the uniform risk. \Cref{cor:global-minimax-explicit-pivot} then instantiates this construction at an
explicit uniform pivot contained in the global class, under transparent compatibility conditions. Since
that local neighborhood is a subset of the global model, the same construction yields a global
minimax lower bound over
\(\mathfrak P(\beta,L_p,f_{\min},B)\).

Thus the adaptive selector is not only locally sharp around interior pivots; under the explicit
compatibility conditions of \Cref{cor:global-minimax-explicit-pivot}, it is globally minimax-rate optimal up to logarithmic
factors.
\end{remark}

\begin{remark}[Experiment-facing proxy]
\label{rem:sup-grid-proxy}
The adaptive theorem controls uniform risk over $(t,x,\xi)$. Exact evaluation of
that risk is not available in experiments, so
we use the fixed-grid sup error
\begin{equation}\label{eq:exp_prox}
E_{\infty,\mathcal G}(t,\xi;h)
:= \max_{x\in\mathcal G_x}
|\hat a_h(t,x;\xi)-a^*(t,x;\xi)| \qquad \text{for fixed prescribed interior pairs $(t,\xi)$}.
\end{equation}
It tests the same one-sided GL behavior on a theorem-facing proxy, but should
not be read as a direct finite-sample verification of the full uniform oracle inequality.
\end{remark}

\section{Experiments}
\label{sec:experiments}
We use controlled synthetic experiments in the SBTS single-interval setting to test the theorems predictions: finite-sample scaling,
pointwise Gaussian approximation under undersmoothing, adaptive bandwidth
behavior, and terminal-edge deterioration. The goal is not to re-benchmark SBTS
as an end-to-end generator, which has already been studied in
\cite{hamdouche2023sbts,alouadi2025robust}, but to isolate the
statistical behavior of the direct drift estimator from optimization,
discretization, simulation, and potential-to-drift recovery error. We therefore
use synthetic bridge families with cached ground-truth drifts.

\paragraph{Setups and metrics.}
We consider two synthetic families with numerically cached ground-truth drifts: a Gaussian-to-Gaussian bridge (GG) and a Mixture-to-Mixture bridge (MM). We report results for $d\in\{1,2\}$ on fixed bounded evaluation boxes $\mathcal X_{\rm eval}\subset\R^d$, using the product Epanechnikov kernel and sample sizes $M\in\{10^3,2\times10^3,4\times10^3,8\times10^3\}$; exact model parameters, grids, repetition counts, and interior query points are given in Appendix~\ref{app:exp-details}.

For rate and adaptivity, the primary loss is the sup-grid error
$E_{\infty,\mathcal G}(t_0,\xi_0;h)$ defined in \eqref{eq:exp_prox},
which is structurally closer to the theorem-side uniform risk than the integrated squared error. For rate verification we fit the slope of $\log \overline E_{\infty,\mathcal G}(M)$ versus $\log M$ and compare it to the finite-range slope induced by $(\log M/M)^{\beta/(2\beta+d)}$; since the implementation uses the product Epanechnikov kernel, the benchmark is the $\beta=2$ absolute-error rate.

For the CLT experiment we use
$Z_M:=\sqrt{M h_M^d}\ (\hat a_{h_M}(t_0,x_0;\xi_0)-a^\ast(t_0,x_0;\xi_0))/\sqrt{\hat\Sigma(t_0,x_0;\xi_0)}$ as the standardized statistic,
with $\hat\Sigma$ the plug-in variance estimator from Appendix~\ref{app:exp-details}. The bandwidth sequence $h_M$ is chosen from a genuinely undersmoothed regime satisfying $M h_M^d\to\infty$ and $h_M^\beta\sqrt{M h_M^d}\to0$. In the final reported one-dimensional runs we fix $h_M=M^{-\alpha}$ with $\alpha=0.22$ for GG1 and $\alpha=0.28$ for MM1 after a short pilot screen on the larger sample sizes. The final CLT summaries use $N_{\rm rep}=300$ Monte Carlo repetitions.

For adaptivity we compare the empirical oracle bandwidth $h_{\rm or}$, obtained by minimizing the empirical error against the cached truth on a fixed geometric grid, with a single canonical theorem-aligned selector $\hat h$ on the same grid. We use the raw-max, one-sided GL-type rule with $\kappa_{\mathrm{pair}}=\kappa_{\mathrm{final}}=2$ and restrict the search to the stable regime $Mh^d\ge81$; selector calibration and nearby variants are summarized in Appendix~\ref{app:exp-bandwidths}. The main diagnostic is the oracle ratio
$\Gamma_{\nu,M}^{(r)}:=E_{\infty,\mathcal G}^{(r)}(t_0,\xi_0;\hat h)/E_{\infty,\mathcal G}^{(r)}(t_0,\xi_0;h_{\rm or})\ge1$,
together with its repetition average $\bar\Gamma_\nu(M)$ and the four-point summaries
$\widehat C_\nu^{\max}:=\max_{M\in\mathcal M}\bar\Gamma_\nu(M)$ and
$\widehat C_\nu^{\avg}:=|\mathcal M|^{-1}\sum_{M\in\mathcal M}\bar\Gamma_\nu(M)$ for
$\mathcal M=\{10^3,2\times10^3,4\times10^3,8\times10^3\}$.

\FloatBarrier
\Needspace{16\baselineskip}
\begin{figure}[H]
    \centering
    \includegraphics[width=.48\linewidth]{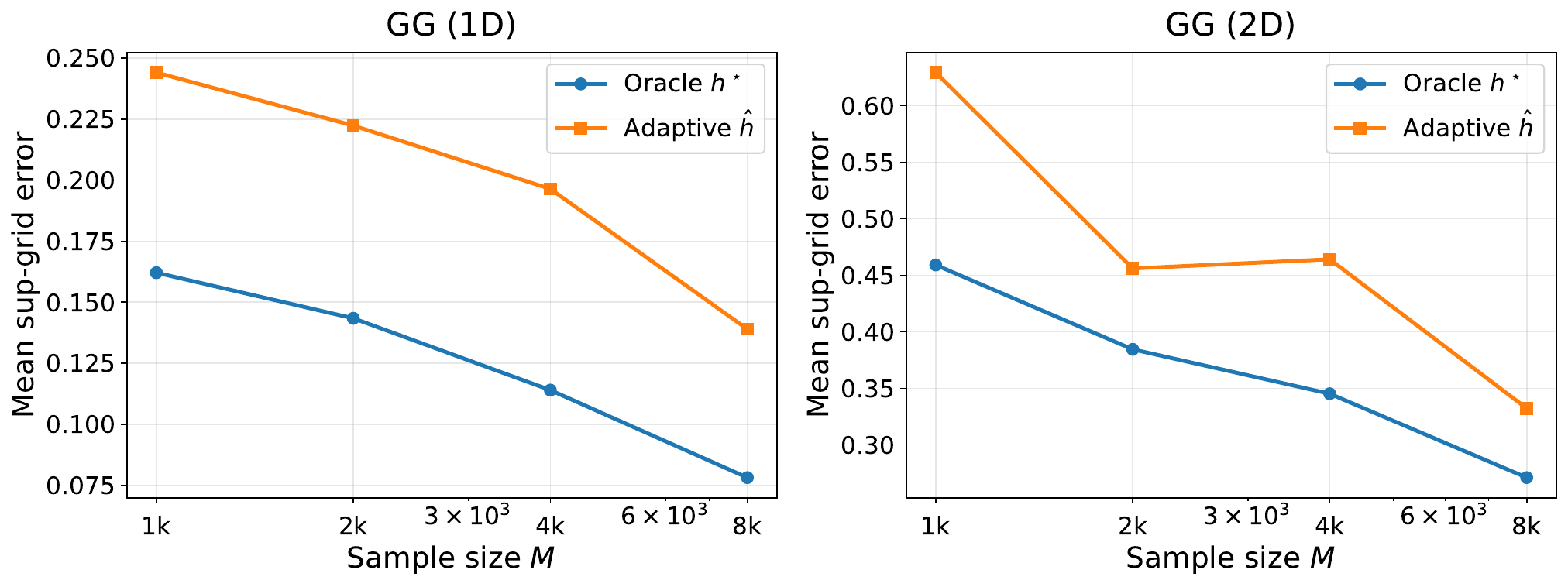}
    \hfill
    \includegraphics[width=.48\linewidth]{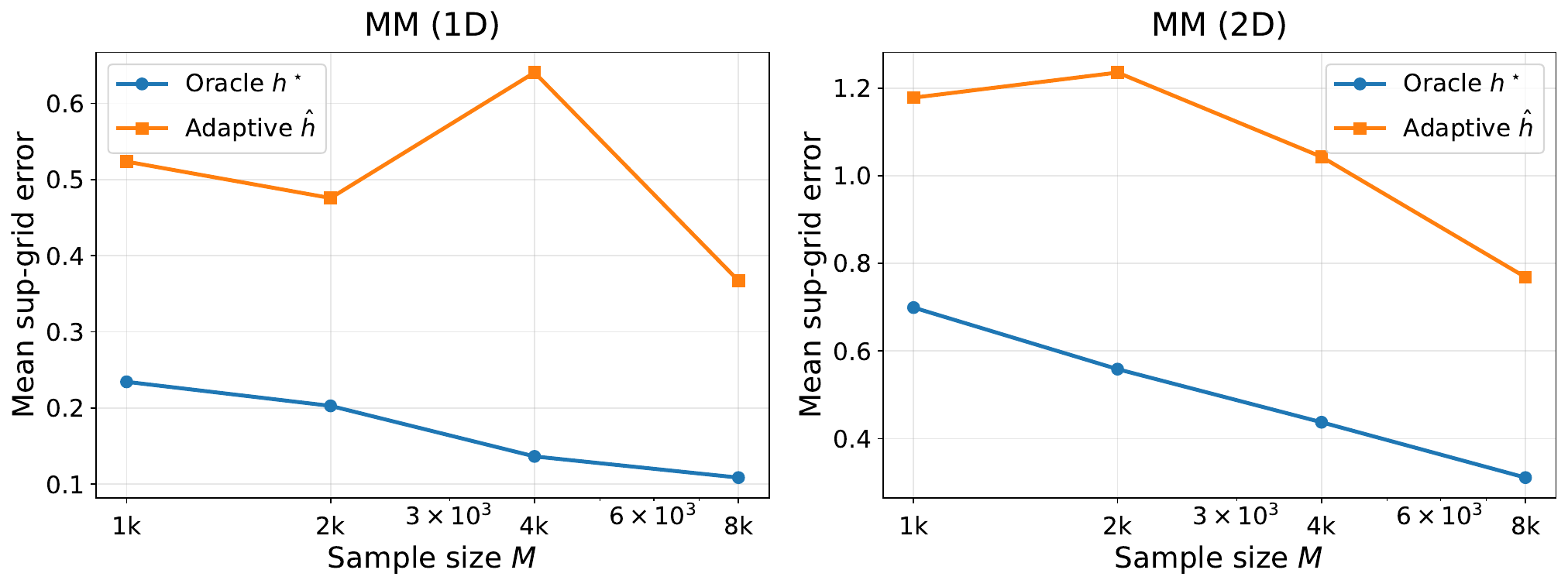}
    \caption{Log--log sup-grid error curves for the GG and MM testbeds. The oracle slopes are compared to the finite-range theoretical slope induced by $(\log M/M)^{\beta/(2\beta+d)}$; adaptive performance itself is assessed below through oracle-ratio diagnostics.}
    \label{fig:rates}
\end{figure}
\FloatBarrier

\begin{table}[H]
\centering
\small
\setlength{\tabcolsep}{4pt}
\renewcommand{\arraystretch}{1.02}
\caption{Estimated oracle slopes of $\log \overline{E}_{\infty,\mathcal G}(M)$ versus $\log M$.}
\label{tab:slopes}
\begin{tabular}{lccc}
    \toprule
    Testbed & $d$ & Theory & Oracle-$h_{\rm or}$ \\
    \midrule
    GG & 1 & $-0.349379$ & $-0.348899$ \\
    GG & 2 & $-0.291150$ & $-0.243435$ \\
    MM & 1 & $-0.349379$ & $-0.390229$ \\
    MM & 2 & $-0.291150$ & $-0.385828$ \\
    \bottomrule
\end{tabular}
\end{table}

\paragraph{Rate verification.}
\Cref{fig:rates,tab:slopes} show that the oracle estimator follows the finite-sample scaling law predicted by \Cref{cor:rate}. In $d=1$, the fitted oracle slopes are very close to the finite-range benchmark; in $d=2$, they remain negative and of the predicted order, with more visible pre-asymptotic effects on the coarser evaluation grid. Thus the experiment supports the theorem at the level it is stated: oracle-rate recovery for $h_{\rm or}$, while adaptivity is assessed separately through oracle-ratio diagnostics.

\newsavebox{\cltRightBox}
\sbox{\cltRightBox}{%
\begin{minipage}[b]{.45\linewidth}
    \centering
    \includegraphics[width=\linewidth]{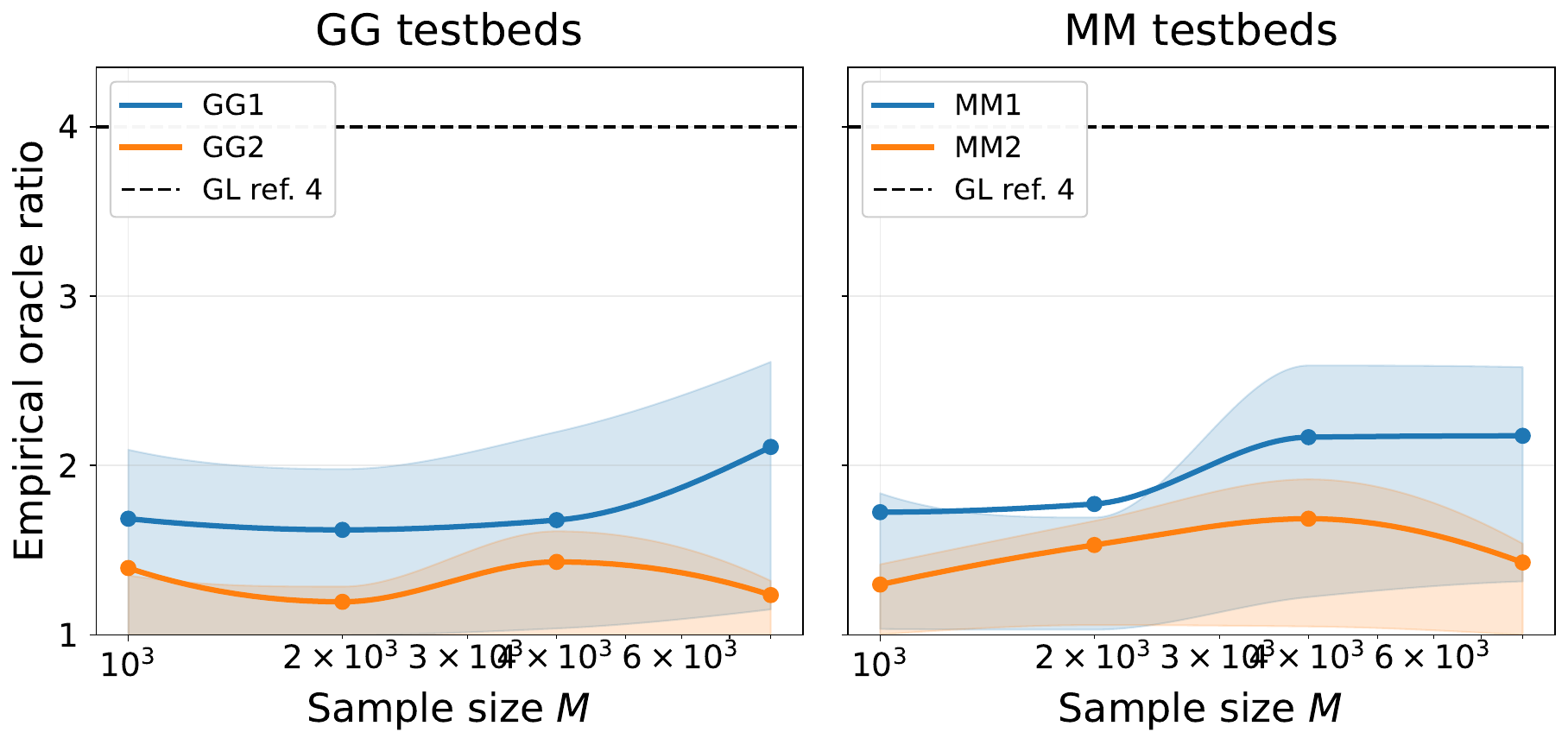}

    \vspace{0.55em}

    \includegraphics[width=.49\linewidth]{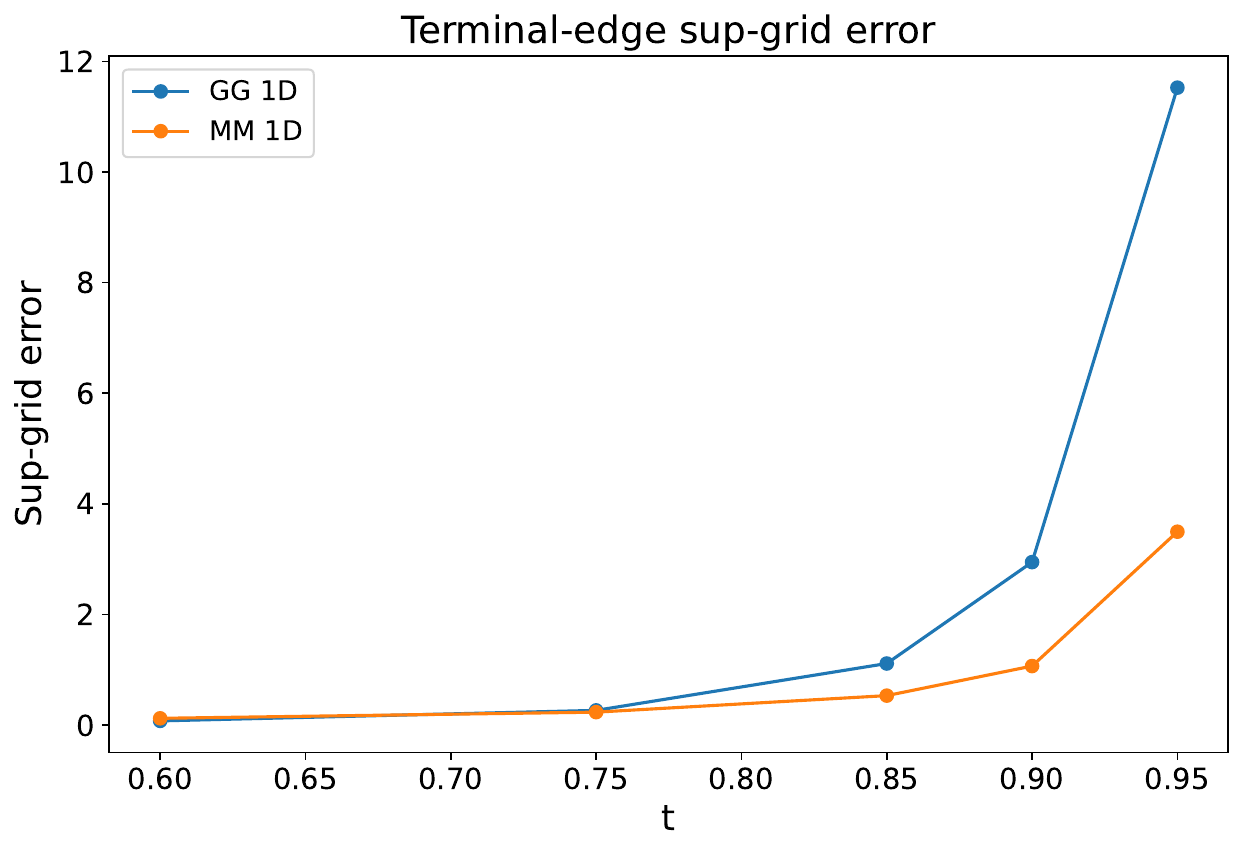}
    \hfill
    \includegraphics[width=.49\linewidth]{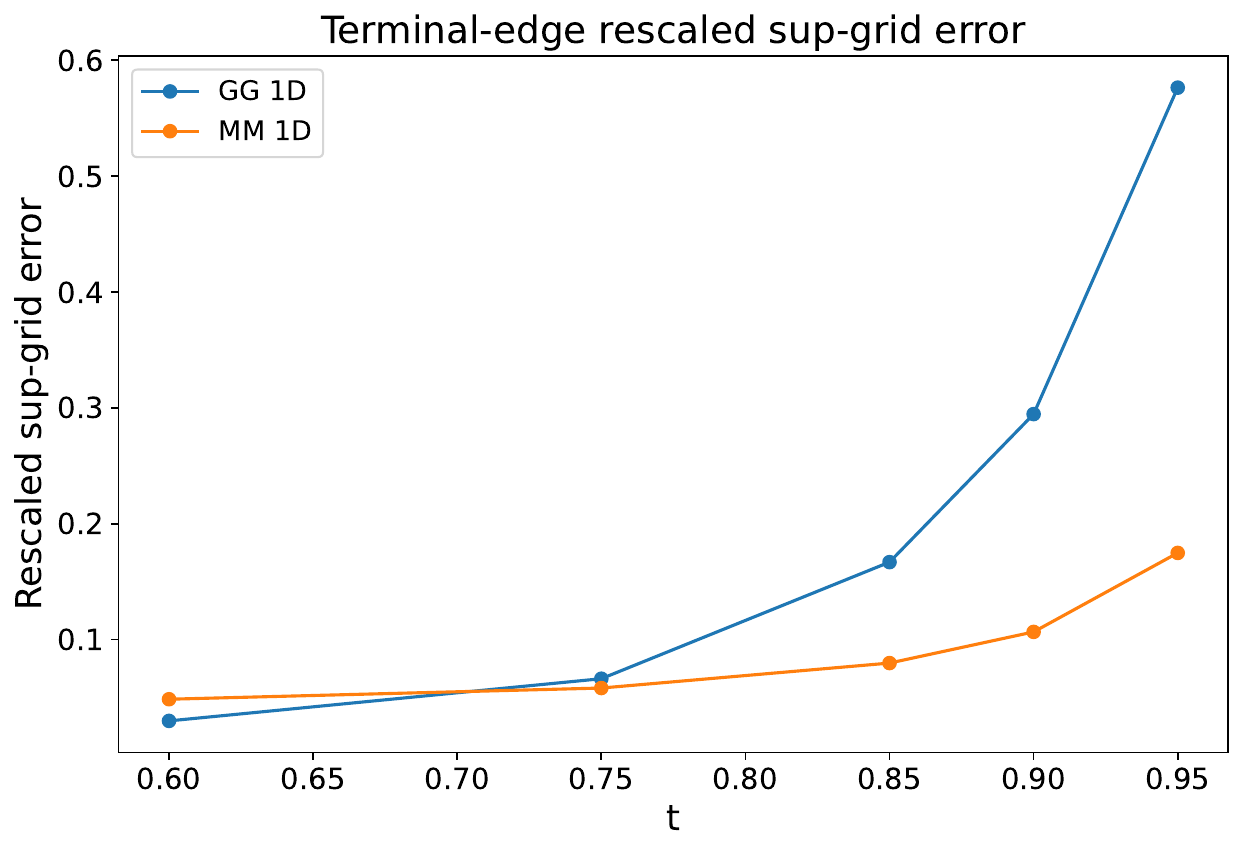}
\end{minipage}%
}

\FloatBarrier
\Needspace{22\baselineskip}
\begin{figure}[H]
    \centering
    \begin{minipage}[b]{.52\linewidth}
        \centering
        \includegraphics[height=\dimexpr\ht\cltRightBox+\dp\cltRightBox\relax]{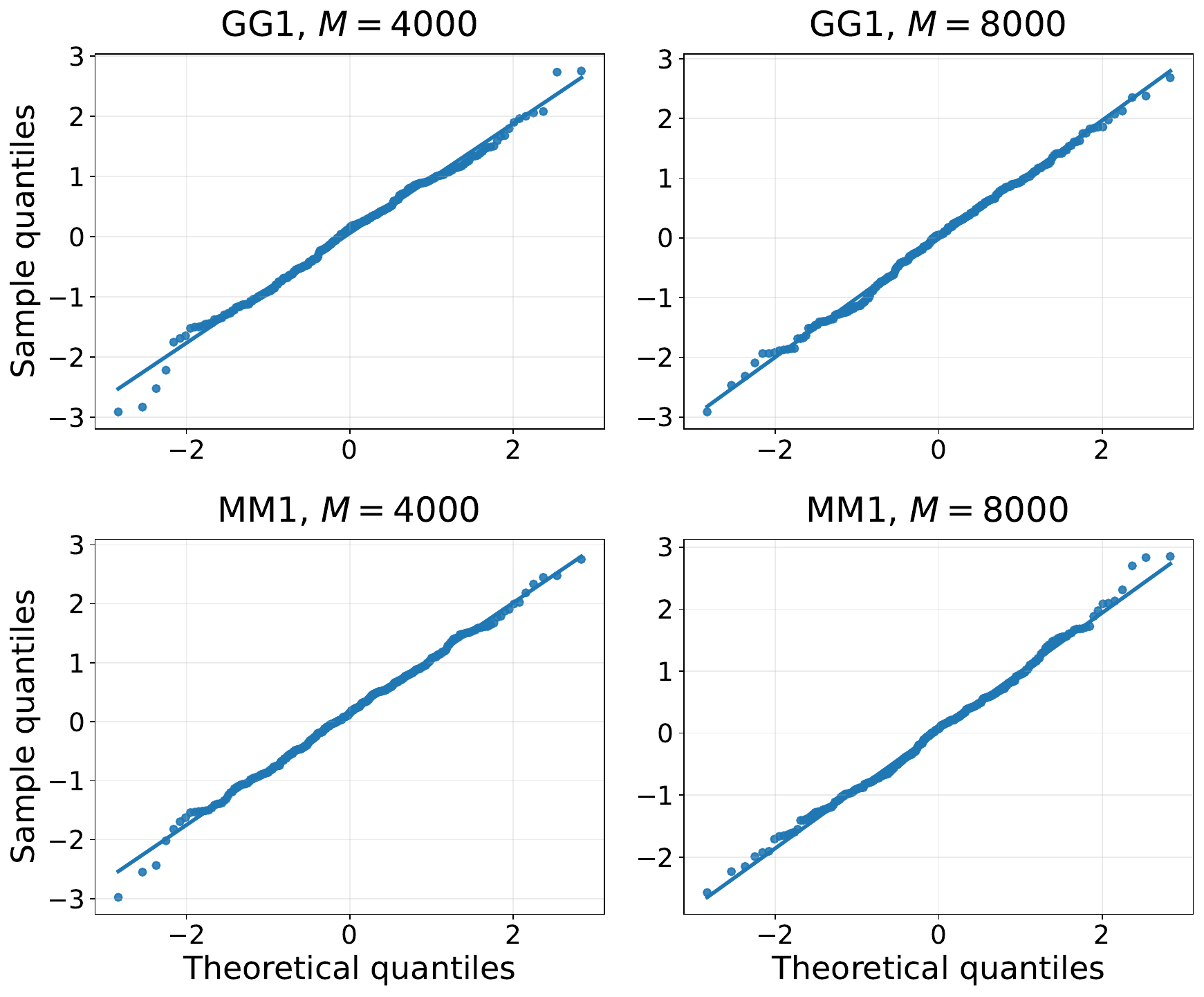}
    \end{minipage}\hfill
    \usebox{\cltRightBox}
    \caption{Pointwise Gaussian approximation, adaptivity, and terminal-edge behavior. Left: QQ-plots for one-dimensional CLT runs at $M=4\times10^3$ and $M=8\times10^3$ in GG1 and MM1. Upper right: empirical oracle ratio $\bar\Gamma_\nu(M)$ for the canonical selector, shown on a denser sample-size grid with interquartile ribbons; the dashed line marks the GL reference level $4$. Lower right: terminal-edge behavior, showing $E_{\infty,\mathcal G}(t,\xi_0;\hat h)$ as $t\uparrow u$ and the rescaled error $\Delta(t)\ E_{\infty,\mathcal G}(t,\xi_0;\hat h)$.}
    \label{fig:clt-adapt-edge}
\end{figure}
\FloatBarrier

\begin{table}[H]
\centering
\small
\setlength{\tabcolsep}{4pt}
\renewcommand{\arraystretch}{1.02}
\caption{Empirical coverage (\%) of nominal $95\%$ confidence intervals in the final $N_{\rm rep}=300$ CLT runs.}
\label{tab:coverage}
\begin{tabular}{lcccc}
    \toprule
    Testbed & $10^3$ & $2\times10^3$ & $4\times10^3$ & $8\times10^3$ \\
    \midrule
    GG & $92.67$ & $96.33$ & $96.67$ & $96.67$ \\
    MM & $92.33$ & $96.67$ & $96.33$ & $96.00$ \\
    \bottomrule
\end{tabular}
\end{table}

\paragraph{Pointwise Gaussian approximation and adaptivity.}
\Cref{fig:clt-adapt-edge,tab:coverage} examine the CLT and the practical selector. The final one-dimensional CLT runs show near-nominal coverage from $M=2\times10^3$ onward, approximately linear QQ-plots at the two largest sample sizes, and formal Shapiro--Wilk / Anderson--Darling diagnostics that do not reject normality at the largest sample size in either testbed; empirical means and variances are reported in Appendix~\ref{app:exp-details}. We therefore interpret the CLT experiment as showing a clear finite-sample Gaussian footprint under genuinely undersmoothed bandwidth sequences, while emphasizing that the evidence is pointwise and design-specific.

For adaptivity, the selector remains in a low-single-digit oracle-ratio regime across all four testbeds, with average ratios between $1.314$ and $1.960$, maximum ratios between $1.431$ and $2.175$, and essentially zero boundary-hit rates. This is the correct oracle-inequality benchmark: the issue is not oracle-tightness but whether the selector stays moderate and stable across the tested range \cite{goldenshluger2011,lacour2016minimal}. Under that criterion, the canonical raw-max one-sided selector is practically oracle-competitive once the bandwidth grid is restricted to the stable regime $Mh^d\ge81$; exact four-point summaries are given in Appendix~\ref{app:exp-bandwidths}.

\paragraph{Terminal singularity and reproducibility.}
To visualize the factor $\Delta(t)^{-1}$, we also evaluate the estimator at times approaching $u$. The raw error $E_{\infty,\mathcal G}(t,\xi_0;\hat h)$ increases as $t\uparrow u$, whereas the rescaled quantity $\Delta(t)\ E_{\infty,\mathcal G}(t,\xi_0;\hat h)$ remains substantially flatter; see \Cref{fig:clt-adapt-edge}. This matches \Cref{rem:eta} and indicates that the deterioration near $u$ is driven mainly by the intrinsic SB singularity. Appendix~\ref{app:exp-stress} reports a bounded-support stress test: in the easier GG1 testbed the wider-support perturbation has only a mild effect, but in a stronger MM1 wide-support variant it sharply increases the variability of the weighted empirical terms, produces nontrivial boundary-hit events, and substantially worsens the selected-estimator error. We release code and configuration files reproducing all figures and tables, including the exact synthetic-model definitions, ground-truth computation protocol, bandwidth grids, repetition counts, selected undersmoothing exponents, stress-test variants, and evaluation grids.

\section{Discussion and Limitations}
\label{sec:discussion}

The paper gives a sharp theory for \emph{direct drift estimation in the SBTS single-interval} setting. Bounded support is central to the current proof because it controls the expanding exponential factor in the SB weight and makes the weighted empirical-process analysis tractable. As a nonparametric method, the estimator also inherits the usual curse of dimensionality, and adaptive selection adds the computational cost of evaluating a bandwidth grid.

The theory is uniform on the query set
$Q_{R,\eta,\rho}=[s,u-\eta]\times B_R\times B_\rho$
and necessarily deteriorates as $t\uparrow u$. The main results are proved for i.i.d.\ trajectory samples in the single-interval model, while the finite-sample ratio analysis works on a natural high-probability event where empirical denominators are separated from zero. Fixed finite-memory extensions are standard, replacing the conditioning variable by a lag block and the ambient conditioning dimension $d$ by $Ld$. Estimating the SB drift from one discretely observed trajectory, where the effective samples are dependent and the past may grow with the grid, requires new tools and is left for future work.

The lower-bound result is local in its construction but global in its implication under the compatibility
conditions of \Cref{cor:global-minimax-explicit-pivot}. \Cref{prop:lower} proves a sharp two-point Le Cam lower bound inside a
shrinking neighborhood of any interior pivot law. \Cref{cor:global-minimax-explicit-pivot} then exhibits an explicit uniform pivot
satisfying these interiority conditions whenever the model constants are compatible with the bounded
support class. Since the resulting local neighborhood is contained in the original global model, the
same construction yields a global minimax lower bound over
\(\mathfrak P(\beta,L_p,f_{\min},B)\). Thus the adaptive selector is globally minimax-rate
optimal up to logarithmic factors, while the proof also gives the stronger local interpretation around
interior laws.

\section{Conclusion}
\label{sec:conclusion}

We studied direct nonparametric estimation of Schr\"odinger bridge time-series drifts. Unlike approaches based on entropic-OT potentials, Sinkhorn iterations, or iterative bridge solvers, our analysis works directly at the drift level and isolates the \emph{statistical error} of the estimator. Under H\"older regularity, a marginal-density floor, and bounded support, we proved an
interior uniform finite-sample bound at the classical kernel rate up to logarithmic factors, a pointwise CLT
under genuine undersmoothing, and an adaptive selector satisfying an oracle inequality. We also proved a pivot-local minimax lower bound and, by exhibiting an explicit uniform pivot in the
global class, derived a global minimax lower bound under transparent compatibility conditions. This
shows that the adaptive selector is minimax-rate optimal up to logarithmic factors. Together, these results provide a self-contained statistical theory for direct kernel estimation of SBTS drifts.

\bibliographystyle{abbrvnat}
\bibliography{example_paper}

@article{marie_rosier_2023_nw_iid_paths,
  author  = {Marie, Nicolas and Rosier, Am{\'e}lie},
  title   = {Nadaraya--Watson Estimator for I.I.D. Paths of Diffusion Processes},
  journal = {Scandinavian Journal of Statistics},
  volume  = {50},
  number  = {2},
  pages   = {589--637},
  year    = {2023},
  doi     = {10.1111/sjos.12593}
}

@article{panloup_tindel_varvenne_2020_drift,
  author  = {Panloup, Fabien and Tindel, Samy and Varvenne, Maylis},
  title   = {A General Drift Estimation Procedure for Stochastic Differential Equations with Additive Fractional Noise},
  journal = {Electronic Journal of Statistics},
  volume  = {14},
  number  = {1},
  pages   = {1075--1136},
  year    = {2020},
  doi     = {10.1214/20-EJS1685}
}

@inproceedings{liu_etal_2024_gsbm,
  author    = {Liu, Guan-Horng and Lipman, Yaron and Nickel, Maximilian and Karrer, Brian and Theodorou, Evangelos A. and Chen, Ricky T. Q.},
  title     = {Generalized Schr{\"o}dinger Bridge Matching},
  booktitle = {International Conference on Learning Representations},
  year      = {2024}
}

@article{schrodinger1931umkehrung,
  author  = {Schr{\"o}dinger, Erwin},
  title   = {{\"U}ber die Umkehrung der Naturgesetze},
  journal = {Sitzungsberichte der Preussischen Akademie der Wissenschaften,
             Physikalisch-mathematische Klasse},
  pages   = {144--153},
  year    = {1931}
}

@article{jamison1975markov,
  author  = {Jamison, Benton},
  title   = {The Markov Processes of Schr{\"o}dinger},
  journal = {Zeitschrift f{\"u}r Wahrscheinlichkeitstheorie und Verwandte Gebiete},
  volume  = {32},
  pages   = {323--331},
  year    = {1975},
  doi     = {10.1007/BF00535844}
}

@incollection{follmer1988randomfields,
  author    = {F{\"o}llmer, Hans},
  title     = {Random Fields and Diffusion Processes},
  booktitle = {{\'E}cole d'{{\'E}}t{\'e} de Probabilit{\'e}s de Saint-Flour XV--XVII, 1985--87},
  series    = {Lecture Notes in Mathematics},
  volume    = {1362},
  pages     = {101--203},
  publisher = {Springer},
  year      = {1988},
  doi       = {10.1007/BFb0086180}
}

@inproceedings{cuturi2013sinkhorn,
  author    = {Cuturi, Marco},
  title     = {Sinkhorn Distances: Lightspeed Computation of Optimal Transport},
  booktitle = {Advances in Neural Information Processing Systems},
  volume    = {26},
  year      = {2013},
  url       = {https://papers.nips.cc/paper/4927-sinkhorn-distances-lightspeed-computation-of-optimal-transport}
}

@inproceedings{genevay2019sinkhorn,
  author    = {Genevay, Aude and Chizat, L{\'e}na{\"i}c and Bach, Francis and Cuturi, Marco and Peyr{\'e}, Gabriel},
  title     = {Sample Complexity of Sinkhorn Divergences},
  booktitle = {Proceedings of the Twenty-Second International Conference on Artificial Intelligence and Statistics},
  series    = {Proceedings of Machine Learning Research},
  volume    = {89},
  pages     = {1574--1583},
  publisher = {PMLR},
  year      = {2019},
  url       = {https://proceedings.mlr.press/v89/genevay19a.html}
}

@inproceedings{mena2019entropic,
  author    = {Mena, Gonzalo and Weed, Jonathan},
  title     = {Statistical Bounds for Entropic Optimal Transport: Sample Complexity and the Central Limit Theorem},
  booktitle = {Advances in Neural Information Processing Systems},
  volume    = {32},
  year      = {2019},
  url       = {https://proceedings.neurips.cc/paper/2019/hash/5acdc9ca5d99ae66afdfe1eea0e3b26b-Abstract.html}
}

@book{vdvwellner1996empirical,
  author    = {van der Vaart, Aad W. and Wellner, Jon A.},
  title     = {Weak Convergence and Empirical Processes: With Applications to Statistics},
  series    = {Springer Series in Statistics},
  publisher = {Springer},
  address   = {New York},
  year      = {1996},
  doi       = {10.1007/978-1-4757-2545-2}
}

@article{bousquet2002bennett,
  author  = {Bousquet, Olivier},
  title   = {A Bennett Concentration Inequality and Its Application to Suprema of Empirical Processes},
  journal = {Comptes Rendus Mathematique},
  volume  = {334},
  number  = {6},
  pages   = {495--500},
  year    = {2002},
  doi     = {10.1016/S1631-073X(02)02292-6}
}

@book{boucheron2013concentration,
  author    = {Boucheron, St{\'e}phane and Lugosi, G{\'a}bor and Massart, Pascal},
  title     = {Concentration Inequalities: A Nonasymptotic Theory of Independence},
  publisher = {Oxford University Press},
  year      = {2013}
}

@book{lecam1986asymptotic,
  author    = {Le Cam, Lucien},
  title     = {Asymptotic Methods in Statistical Decision Theory},
  series    = {Springer Series in Statistics},
  publisher = {Springer},
  address   = {New York},
  year      = {1986}
}

@incollection{yu1997assouad,
  author    = {Yu, Bin},
  title     = {Assouad, Fano, and Le Cam},
  booktitle = {Festschrift for Lucien Le Cam},
  editor    = {Pollard, David and Torgersen, Erik and Yang, Grace L.},
  pages     = {423--435},
  publisher = {Springer},
  year      = {1997},
  doi       = {10.1007/978-1-4612-1880-7_29}
}

@article{gine2002rates,
  author  = {Gin{\'e}, Evarist and Guillou, Armelle},
  title   = {Rates of strong uniform consistency for multivariate kernel density estimators},
  journal = {Annales de l'Institut Henri Poincar{\'e} (B) Probability and Statistics},
  volume  = {38},
  number  = {6},
  pages   = {907--921},
  year    = {2002}
}

@article{einmahl2005uniform,
  author  = {Einmahl, Uwe and Mason, David M.},
  title   = {Uniform in bandwidth consistency of kernel-type function estimators},
  journal = {The Annals of Statistics},
  volume  = {33},
  number  = {3},
  pages   = {1380--1403},
  year    = {2005}
}

@article{lacour2016minimal,
  author  = {Lacour, Claire and Massart, Pascal},
  title   = {Minimal Penalty for Goldenshluger--Lepski Method},
  journal = {Stochastic Processes and their Applications},
  volume  = {126},
  number  = {12},
  pages   = {3774--3789},
  year    = {2016},
  doi     = {10.1016/j.spa.2016.04.015}
}

@article{leonard2014survey,
  author  = {Christian L{\'e}onard},
  title   = {A Survey of the Schr{\"o}dinger Problem and Some of Its Connections with Optimal Transport},
  journal = {Discrete and Continuous Dynamical Systems - A},
  year    = {2014},
  volume  = {34},
  number  = {4},
  pages   = {1533--1574},
  doi     = {10.3934/dcds.2014.34.1533}
}

@article{chen2021siamreview,
  author  = {Yongxin Chen and Tryphon T. Georgiou and Michele Pavon},
  title   = {Stochastic Control Liaisons: Richard Sinkhorn Meets Gaspard Monge on a Schr{\"o}dinger Bridge},
  journal = {SIAM Review},
  year    = {2021},
  volume  = {63},
  number  = {2},
  pages   = {249--313},
  doi = {10.1137/20M1339982}
}

@inproceedings{debortoli2021sb,
  author    = {De Bortoli, Valentin and Thornton, James and Heng, Jeremy and Doucet, Arnaud},
  title     = {Diffusion Schr{\"o}dinger Bridge with Applications to Score-Based Generative Modeling},
  booktitle = {Advances in Neural Information Processing Systems},
  year      = {2021}
}

@inproceedings{strome2023sampling,
  author    = {Stromme, Austin J.},
  title     = {Sampling From a Schr{\"o}dinger Bridge},
  booktitle = {Proceedings of the 26th International Conference on Artificial Intelligence and Statistics},
  series    = {Proceedings of Machine Learning Research},
  volume    = {206},
  pages     = {4058--4067},
  publisher = {PMLR},
  year      = {2023},
  url       = {https://proceedings.mlr.press/v206/stromme23a.html}
}

@article{nadaraya1964estimating,
  author  = {Nadaraya, Elizbar A.},
  title   = {On Estimating Regression},
  journal = {Theory of Probability and Its Applications},
  volume  = {9},
  number  = {1},
  pages   = {141--142},
  year    = {1964},
  doi     = {10.1137/1109020}
}

@article{watson1964smooth,
  author  = {Watson, Geoffrey S.},
  title   = {Smooth Regression Analysis},
  journal = {Sankhy{\=a}: The Indian Journal of Statistics, Series A},
  volume  = {26},
  number  = {4},
  pages   = {359--372},
  year    = {1964}
}

@article{hamdouche2023sbts,
  author  = {Hamdouche, Mohamed and Henry{-}Labord{\`e}re, Pierre and Pham, Huy{\^e}n},
  title   = {Generative Modeling for Time Series via {Schr{\"o}dinger Bridge}},
  journal = {Journal of Machine Learning Research},
  year    = {2026},
  note    = {To appear},
  url     = {https://arxiv.org/abs/2304.05093}
}

@inproceedings{alouadi2025robust,
  author    = {Alouadi, Alexandre and Barreau, Baptiste and Carlier, Laurent and Pham, Huy{\^e}n},
  title     = {Robust time series generation via {Schr{\"o}dinger Bridge}: a comprehensive evaluation},
  booktitle = {Proceedings of the 6th ACM International Conference on AI in Finance},
  series    = {ICAIF '25},
  pages     = {906--914},
  year      = {2025},
  url       = {https://doi.org/10.1145/3768292.3770391}
}

@article{lepski1997,
  author  = {Lepski, Oleg V. and Spokoiny, Vladimir G.},
  title   = {Optimal Pointwise Adaptive Methods in Nonparametric Estimation},
  journal = {The Annals of Statistics},
  volume  = {25},
  number  = {6},
  pages   = {2512--2546},
  year    = {1997},
  doi     = {10.1214/aos/1030741083}
}

@article{pooladian2024plugin,
  author  = {Aram{-}Alexandre Pooladian and Jonathan Niles{-}Weed},
  title   = {Plug-in Estimation of Schr{\"o}dinger Bridges},
  journal = {arXiv preprint arXiv:2408.11686},
  year    = {2024}
}

@article{belomestny2025forwardreverse,
  author  = {Belomestny, Denis and Schoenmakers, John},
  title   = {Forward Reverse Kernel Regression for the Schr{\"o}dinger Bridge Problem},
  journal = {arXiv preprint arXiv:2507.00640},
  year    = {2025}
}

@book{tsybakov2009nonparametric,
  author    = {Alexandre B. Tsybakov},
  title     = {Introduction to Nonparametric Estimation},
  publisher = {Springer},
  year      = {2009},
  doi       = {10.1007/b13794}
}

@book{gyorfi2002,
  author    = {L{\'a}szl{\'o} Gy{\"o}rfi and Michael Kohler and Adam Krzy{\.z}ak and Harro Walk},
  title     = {A Distribution-Free Theory of Nonparametric Regression},
  publisher = {Springer},
  year      = {2002},
  doi       = {10.1007/978-1-4613-0003-1}
}

@article{goldenshluger2011,
  author  = {Goldenshluger, Alexander and Lepski, Oleg},
  title   = {Bandwidth Selection in Kernel Density Estimation: Oracle Inequalities and Adaptive Minimax Optimality},
  journal = {The Annals of Statistics},
  volume  = {39},
  number  = {3},
  pages   = {1608--1632},
  year    = {2011},
  doi     = {10.1214/11-AOS883}
}

\newpage
\appendix
\onecolumn
\appendix

\section{Proof roadmap and auxiliary notation}
\label{Appendix A}

Throughout the Appendices, we keep the notation of Sections~3--4. In particular,
$g_1,g_2,D^*,N^*,Q^*,a^*,\widehat f_j,\widehat g_j,\widehat D,\widehat N,\widehat a$
retain their meanings from the main text. We also write $C_f:=|f|_\infty$, $C_B:=\lambda(B)$, and
$|K|_2:=(\int_{\mathbb R^d}K(z)^2\,dz)^{1/2}$ and
$\mu_\infty(K):=|K|_\infty$.
For fixed $(t,x,\xi)$, we write
$\Delta_t:=\Delta(t)$ and $F_{t,x,\xi}(y):=F(t,\xi,x,y)$ when convenient.

For an open neighborhood $U$ of $B$, write $\Sigma_U(\beta,L)$ for the
Hölder ball of functions $r:U\to\mathbb{R}$ satisfying
\begin{equation}
\sum_{|\alpha|\le \ell_\beta}|D^\alpha r|_\infty
+
\sum_{|\alpha|=\ell_\beta}
\sup_{z\neq z'\in U}
\frac{|D^\alpha r(z)-D^\alpha r(z')|}
{|z-z'|^{\beta-\ell_\beta}}
\le L .
\end{equation}
When the neighborhood $U$ is clear, we write $\Sigma(\beta,L)$.

\subsection{Uniform KDE bound}

\begin{lemma}[Uniform KDE bound for $\widehat f_j$]
\label{lem:kde}
For each $j\in\{1,2\}$, there exist constants $C_1,C_2,C_K>0$, depending only on
$K,d,\operatorname{diam}(B)$, such that for any $\delta\in(0,1)$, with probability at least
$1-\delta$,
\begin{align}
\sup_{\xi\in B_\rho}\big|\widehat f_j(\xi)-f(\xi)\big|
&\le
\frac{C_B L_p}{\ell_\beta!}\mu_\beta(K)h_j^\beta +
C_1 \sqrt{C_f}\ |K|_2
\sqrt{\frac{d\log\!\big(\tfrac{C_K}{h_j}\big)+\log\!\big(\tfrac{2}{\delta}\big)}{M h_j^d}}\\
&\qquad\qquad+
C_2 \frac{\mu_\infty(K)}{h_j^d}
\frac{d\log\!\big(\tfrac{C_K}{h_j}\big)+\log\!\big(\tfrac{2}{\delta}\big)}{M}.
\end{align}
\end{lemma}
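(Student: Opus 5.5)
We split the error into bias and stochastic parts,
\[
\sup_{\xi\in B_\rho}|\widehat f_j(\xi)-f(\xi)|\le \sup_{\xi\in B_\rho}|\E\widehat f_j(\xi)-f(\xi)|+\sup_{\xi\in B_\rho}|\widehat f_j(\xi)-\E\widehat f_j(\xi)|,
\]
and bound the first term deterministically and the second by Talagrand's inequality (in Bousquet's form) combined with a VC-type entropy bound for translated kernels.

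\emph{Bias.} For $\xi\in B_\rho$ and $h_j\le\rho/R_K$, the support of $K_{h_j}(\cdot-\xi)$ lies in $B$. Hence
\[
\E\widehat f_j(\xi)-f(\xi)=\int K(z)\bigl(f(\xi+h_jz)-f(\xi)\bigr)\,dz .
\]
Since $f(\xi)=\int_B \bar p(\xi,y)\,dy$ and $\bar p(\cdot,y)\in\Sigma(\beta,L_p)$ uniformly in $y$, we have $f\in\Sigma(\beta,C_BL_p)$ on a neighborhood of $B$. We Taylor-expand $f$ to order $\ell_\beta$ with the integral (or mean-value) remainder. The polynomial terms of degree $1,\dots,\ell_\beta$ vanish by the moment conditions of \Cref{ass:kernel}. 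The remainder is bounded by $\frac{C_BL_p}{\ell_\beta!}|h_jz|^\beta$, because the $\ell_\beta$-th derivatives are $(\beta-\ell_\beta)$-Hölder and the multinomial sum is absorbed into the constant as in Tsybakov. Integrating against $|K|$ gives the bias term $\frac{C_BL_p}{\ell_\beta!}\mu_\beta(K)h_j^\beta$.

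\emph{Stochastic term.} Consider the class $\mathcal K_h=\{z\mapsto K((z-\xi)/h):\xi\in B_\rho\}$. Because $K$ is Lipschitz with compact support, $\mathcal K_h$ is uniformly bounded by $\mu_\infty(K)$ and is VC-type. Concretely, $\xi\mapsto K((\cdot-\xi)/h)$ is $(L_K/h)$-Lipschitz into sup-norm, so a $\varepsilon h/L_K$-net of $B_\rho$ gives covering numbers $N(\varepsilon)\le (C_K/(h\varepsilon))^d$ with $C_K$ depending on $L_K$, $\mathrm{diam}(B)$ and $d$. The variance envelope is
\[
\sigma^2=\sup_\xi \E K((X_s-\xi)/h)^2\le C_f h^d|K|_2^2 .
\]
By the Giné--Guillou / Einmahl--Mason bound for the expected supremum of a VC-type class (van der Vaart--Wellner), we get
\[
\E\sup|P_M-P|\lesssim \sqrt{\sigma^2 d\log(C_K/h)/M}+\mu_\infty(K)\,d\log(C_K/h)/M .
\]
Bousquet's inequality then adds, with probability $1-\delta$, the deviation terms $\sqrt{2\sigma^2\log(2/\delta)/M}$, a cross term $\sqrt{\mu_\infty\E\sup\cdot\log(2/\delta)/M}$ (absorbed via AM--GM), and $\mu_\infty(K)\log(2/\delta)/M$. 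Dividing by $h^d$ yields exactly the displayed $\sqrt{C_f}|K|_2\sqrt{\Lambda/(Mh^d)}$ and $\mu_\infty(K)\Lambda/(Mh^d)$ terms.

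\emph{Main obstacle.} The delicate step is the expected-supremum bound. The localized entropy integral must be handled in the regime where $\sigma$ is small relative to the envelope, so that we obtain the factor $\sqrt{\log(C_K/h)}$ rather than $\log(1/\sigma)$ with uncontrolled constants. We would invoke the standard VC-type inequality $\E\|P_M-P\|_{\mathcal F}\lesssim \sqrt{\sigma^2 v\log(AU/\sigma)/M}+vU\log(AU/\sigma)/M$. We then use $\sigma\gtrsim h^{d/2}$ only through $\log(AU/\sigma)\lesssim d\log(C_K/h)$, enlarging $C_K$ if needed. Because $\sigma$ may actually be smaller, we take the bound with $\sigma$ replaced by the upper bound $\sqrt{C_f}|K|_2h^{d/2}$, which is allowed by monotonicity in $\sigma$.
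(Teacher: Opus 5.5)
Your proposal is correct and follows the paper's route. The bias is handled by Taylor expansion with the kernel moment conditions and $|f|_{C^\beta}\le C_BL_p$, and the stochastic term by the standard uniform bound for the translated-kernel class (Gin\'e--Guillou/Einmahl--Mason together with Bousquet's inequality); the paper simply cites this, while you spell out the sup-norm covering $N(\varepsilon)\le (C_K/(h\varepsilon))^d$ and the VC-type expected-supremum step. One small fix: replacing $\sigma$ by $\sqrt{C_f}\,|K|_2h^{d/2}$ is not justified by monotonicity, since the term $\log(AU/\sigma)$ decreases in $\sigma$; it is justified because the VC-type inequality holds for any $\sigma$ with $\sup_f Pf^2\le\sigma^2\le U^2$.
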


\begin{proof}
This is a standard uniform kernel-density estimate for the translated-kernel class given as
$\{K_{h_j}(\cdot-\xi):\xi\in B_\rho\}$ under Hölder regularity of $f$. For $h_j\le \rho/R_K$, the kernel window stays inside $B$ for
$\xi\in B_\rho$. The bias is the standard Tsybakov-order expansion: all monomials
$1\le |\alpha|\le \ell_\beta$ vanish, and since
$f(\xi)=\int_B p(\xi,y)\,dy
\ \text{and}\ \sup_{y\in B}|\bar p(\cdot,y)|_{C^\beta}\le L_p$,
the marginal satisfies $|f|_{C^\beta}\le C_B L_p$. Hence the Hölder
remainder is bounded by
$(C_B L_p/\ell_\beta!)h_j^\beta\mu_\beta(K)$.
The stochastic term is the usual uniform empirical-process bound for
translated kernels. Results of this type may be found, for example, in
Gin\'e and Guillou~\cite{gine2002rates} and Einmahl and Mason~\cite{einmahl2005uniform}.
Applied to the present fixed-bandwidth setting, they yield a bound of the displayed form, so we
omit the routine details.
\end{proof}

\subsection{Uniform weighted bound for $\widehat g_1$}

\begin{lemma}[Uniform weighted bound for $\widehat g_1$]
\label{lem:g1}
For any $\delta\in(0,1)$, with probability at least $1-\delta$,
\begin{align}
\sup_{t\in[s,u-\eta],\,x\in\mathbb R^d,\,\xi\in B_\rho}
\big|\widehat g_1(t,x;\xi)-g_1(t,x;\xi)\big|
&\le
\frac{C_BC_F L_p}{\ell_\beta!}\mu_\beta(K)h_1^\beta \\
&\quad+
C_1\ C_F\ \sqrt{C_f}\ |K|_2
\sqrt{\frac{d\log\!\big(\tfrac{C_K}{h_1}\big)+\log\!\big(\tfrac{2}{\delta}\big)}{M h_1^{d}}} \\
&\quad+
C_2\ \frac{C_F\ \mu_\infty(K)}{h_1^{d}}
\frac{d\log\!\big(\tfrac{C_K}{h_1}\big)+\log\!\big(\tfrac{2}{\delta}\big)}{M}.
\end{align}
\end{lemma}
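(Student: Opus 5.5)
We split the error into a bias term and a stochastic term,
\[
\widehat g_1-g_1=(\E\widehat g_1-g_1)+(\widehat g_1-\E\widehat g_1),
\]
and treat each uniformly over $(t,x,\xi)\in[s,u-\eta]\times\R^d\times B_\rho$. Throughout we use the factorization $F(t,\xi,x,y)=\chi(t,x,y)G(\xi,y)$ with $\chi\in[0,1]$ and $G\le C_F$ on $B\times B$ (Remark~\ref{rem:bounded-support}). This is what lets every constant of Lemma~\ref{lem:kde} reappear multiplied only by $C_F$.

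\emph{Bias.} Changing variables gives
\[
\E\widehat g_1(t,x;\xi)=\int_B\int K(z)\,F(t,\xi,x,y)\,p(\xi+h_1z,y)\,dz\,dy.
\]
Since $h_1\le\rho/R_K$, the window stays in $B$. The weight $F$ is evaluated at the query point $\xi$ rather than at the data point $X_s^{(m)}$, so it does not depend on $z$; only $p(\cdot,y)$ is smoothed. For each fixed $y$ we Taylor-expand $\bar p(\cdot,y)\in\Sigma(\beta,L_p)$ to order $\ell_\beta$. The vanishing moments of $K$ (Assumption~\ref{ass:kernel}) kill all terms with $1\le|\alpha|\le\ell_\beta$, and the Hölder remainder is at most $(L_p/\ell_\beta!)\,h_1^\beta|z|^\beta$. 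Integrating against $|K(z)|\,dz$ and then against $F\,dy\le C_F\,dy$ over $B$ gives the bias bound $C_BC_FL_p\mu_\beta(K)h_1^\beta/\ell_\beta!$, uniformly in $(t,x,\xi)$.

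\emph{Stochastic term.} This is the supremum of a centered empirical process indexed by
\[
\mathcal F_{h_1}=\bigl\{(z,y)\mapsto F(t,\xi,x,y)\,K_{h_1}(z-\xi)\bigr\}.
\]
The class has envelope $C_F|K|_\infty h_1^{-d}$, and its variance satisfies
\[
\sigma^2\le C_F^2\int K_{h_1}(z-\xi)^2f(z)\,dz\le C_F^2C_f|K|_2^2h_1^{-d}.
\]
Talagrand's inequality in Bousquet's form then produces the $\sqrt{\cdot/(Mh_1^d)}$ and $1/(Mh_1^d)$ terms, once we show that $\mathcal F_{h_1}$ is VC-type with covering numbers of the form $(C_K/(h_1\varepsilon))^{d'}$. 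Given that, the expected supremum is handled by the standard Giné--Guillou bound, exactly as in Lemma~\ref{lem:kde}. Constants that depend on the effective parameter dimension are absorbed into $C_K$, $C_1$ and $C_2$.

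\emph{Main obstacle: the entropy bound.} The difficulty is that $x$ ranges over all of $\R^d$ and $t$ over an interval, on top of $\xi$. I would handle this in three steps.
\begin{enumerate}
\item \textbf{Bounded $x$.} The map $(t,x)\mapsto\chi(t,x,y)$ is Lipschitz with constant $C(\eta,B,R')$ for $|x|\le R'$ and $t\le u-\eta$. The map $\xi\mapsto G(\xi,y)$ is Lipschitz on $B$ with constant $C_F\operatorname{diam}(B)/\Delta$. The map $\xi\mapsto K_{h_1}(z-\xi)$ is Lipschitz with constant $L_Kh_1^{-d-1}$. So an $\varepsilon h_1^{d+1}$-grid in the parameter $(t,x,\xi)$ yields an $\varepsilon\cdot$envelope cover in sup-norm, with polynomial dependence on $1/h_1$.
\item \textbf{Large $x$.} For $|x|\ge R'$ with $R'$ large compared to $C_Y$, we have $\chi\le e^{-(R'-C_Y)^2/(2\Delta)}$. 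Choosing $R'\asymp\sqrt{\log(1/\varepsilon)}$ makes every such function $\varepsilon$-close to $0$ relative to the envelope. This adds only logarithmic factors, which the polynomial covering bound absorbs.
\item \textbf{Products.} Alternatively, one can bound the uniform entropy of the product class by the sum of the entropies of the uniformly bounded factor class $\{F\}$ and the translated-kernel class. This step is where I expect the care to be needed.
\end{enumerate}
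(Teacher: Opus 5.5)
Your proposal is correct and follows essentially the same route as the paper. The shared steps are: the bias bound via Taylor expansion of $\bar p(\cdot,y)$, with the weight frozen at the query point $\xi$; the same envelope and variance bounds; the factorization $F=\chi G$ with Lipschitz control in $(t,x,\xi)$; truncation of large $x$ at a radius of order $\sqrt{\log(1/\varepsilon)}$; and Bousquet's inequality.

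The one real difference is the covering metric. You cover in sup-norm relative to the envelope, which gives a VC-type/uniform-entropy bound that you feed into the Gin\'e--Guillou maximal inequality. The paper instead covers in $L_2(P)$ at scale $\sigma_{h_1}$ and inserts those numbers directly into Dudley's integral. Your sup-norm covers control the empirical $L_2(P_M)$ entropy that symmetrization actually requires, and they cost only a factor $\log(U_{h_1}/\sigma_{h_1})\asymp\log(1/h_1)$ that is absorbed into $C_K$. Steps 1--2 already complete the entropy bound, so your alternative step 3 is unnecessary.
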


\begin{proof}
Let $v_d$ denote the volume of the Euclidean unit ball in $\mathbb R^d$, and choose
$R_K>0$ such that $\operatorname{supp}(K)\subseteq B(0,R_K)$. Fix $(t,x,\xi)$ and write, with the change of variables $u=(z-\xi)/h_1$,
\begin{align}
\E \hat g_1(t,x;\xi)
&=
\iint F(t,\xi,x,y)\,K_{h_1}(z-\xi)\,p(z,y)\,dz\,dy\\
&=
\int_{\mathbb{R}^d} K(u)
\Bigl(\int_B F(t,\xi,x,y)\,\bar p(\xi+h_1u,y)\,dy\Bigr)\,du,
\end{align}
where $\bar p(\cdot,y)$ is the extension from Assumption~\ref{ass:pair}(i). Define
$H_{t,x,\xi}(z):=\int_B F(t,\xi,x,y)\,\bar p(z,y)\,dy$.
By bounded support and the definition of $C_F$ in \Cref{rem:bounded-support}, we have
$F(t,\xi,x,y)\le C_F$ for all $(t,x,\xi,y)\in[s,u-\eta]\times\R^d\times B\times B$.
By Assumption~\ref{ass:pair} (i), the map $z\mapsto \bar p(z,y)$ is H\"older--$\beta$
uniformly in $y\in B$. Hence
$H_{t,x,\xi}\in\Sigma(\beta,C_BC_FL_p)$ in its $z$-argument, uniformly over
$(t,x,\xi)\in [s,u-\eta]\times\mathbb R^d\times B_\rho$.
More explicitly, for every multi-index $\alpha$ with $|\alpha|\le \ell_\beta$,
\begin{equation}
|D^\alpha H_{t,x,\xi}(z)|
\le
\int_B F(t,\xi,x,y)\,|D^\alpha \bar p(z,y)|\,dy
\le
C_B C_F L_p,
\end{equation}
and for $|\alpha|=\ell_\beta$,
$|D^\alpha H_{t,x,\xi}(z)-D^\alpha H_{t,x,\xi}(z')|
\le
C_B C_F L_p\,|z-z'|^{\beta-\ell_\beta}$.

With $\ell_\beta=\max\{m\in\mathbb N_0:m<\beta\}$, Taylor expansion of
$H_{t,x,\xi}$ at $\xi$ to order $\ell_\beta$, together with the moment conditions
$\int_{\mathbb R^d}u^\alpha K(u)\,du=0,\  1\le |\alpha|\le \ell_\beta$,
yields
\begin{align}
\big|\mathbb E\ \hat g_1(t,x;\xi)-H_{t,x,\xi}(\xi)\big|
&=\Big|\int K(u)\big(H_{t,x,\xi}(\xi+h_1u)-H_{t,x,\xi}(\xi)\big)\ du\Big|\\
&\le \frac{C_B C_F L_p}{\ell_\beta!}\ h_1^\beta \int |u|^\beta |K(u)|\ du.
\end{align}
As $\xi\in B$ and $\bar p(\xi,y)=p(\xi,y)$ on $B$:
$H_{t,x,\xi}(\xi)=\int_B F(t,\xi,x,y)\ p(\xi,y)\ dy=g_1(t,x;\xi)$.
We conclude
\begin{equation}
\sup_{t\in[s,u-\eta],\,x\in\mathbb R^d,\,\xi\in B_\rho}
|\mathbb E\hat g_1(t,x;\xi)-g_1(t,x;\xi)|
\le
\frac{C_BC_FL_p}{\ell_\beta!}\mu_\beta(K)h_1^\beta .
\end{equation}

Now define the weighted class
\begin{equation}
\mathcal H_{h_1}
:=
\left\{
h_{t,x,\xi}(z,y):=
F(t,\xi,x,y)K_{h_1}(z-\xi):
(t,x,\xi)\in [s,u-\eta]\times\mathbb R^d\times B_\rho
\right\}
\end{equation}
and the centered process
$\mathbb G_M h:=\frac1M\sum_{m=1}^M\big(h(X_s^{(m)},X_u^{(m)})-\mathbb E h(X_s,X_u)\big)$.
Then
\begin{equation}
\sup_{t\in[s,u-\eta],\,x\in\mathbb R^d,\,\xi\in B_\rho}
|\hat g_1(t,x;\xi)-\mathbb E\hat g_1(t,x;\xi)|
=
\sup_{h\in\mathcal H_{h_1}}|\mathbb G_Mh|.
\end{equation}

The class $\mathcal H_{h_1}$ has envelope
$U_{h_1}:=\sup_{h\in\mathcal H_{h_1}}|h|_\infty\le C_F\ \mu_\infty(K)/h_1^d$.
Moreover, if $P$ denotes the law of $(X_s,X_u)$, then
$\sigma_{h_1}:=\sup_{h\in\mathcal H_{h_1}}|h|_{L_2(P)}
\le C_F\sqrt{C_f}\ |K|_2/h_1^{d/2}$, because
\begin{align}
|h_{t,x,\xi}|_{L_2(P)}^2
&= \iint F(t,\xi,x,y)^2\ K_{h_1}(z-\xi)^2\ p(z,y)\ dz\ dy\\
&\le C_F^2\int K_{h_1}(z-\xi)^2\ f(z)\ dz
\le C_F^2 C_f\ |K_{h_1}|_{L_2(\lambda)}^2.
\end{align}

Write $F(t,\xi,x,y)=\chi(t,x,y)\ G(\xi,y)$, where
$\chi(t,x,y):=e^{-\frac{|y-x|^2}{2\Delta(t)}}$ and
$G(\xi,y):=e^{\frac{|y-\xi|^2}{2\Delta}}$.
On $[s,u-\eta]\times\R^d\times B$, the function $\chi$ is globally Lipschitz in $(t,x)$: there
exists $L_\chi>0$, depending only on $\eta,\Delta$, such that
\begin{equation}
|\chi(t,x,y)-\chi(t',x',y)|
\le L_\chi\big(|t-t'|+|x-x'|\big)
\qquad
\text{for all }t,t'\in[s,u-\eta],\ x,x'\in\R^d,\ y\in B.
\end{equation}
Indeed,
$\sup_{r\ge0,\ \tau\in[\eta,\Delta]}\frac{r}{\tau}e^{-r^2/(2\tau)}<\infty,
\ \sup_{r\ge0,\ \tau\in[\eta,\Delta]}\frac{r^2}{2\tau^2}e^{-r^2/(2\tau)}<\infty$.
Also, $G$ is Lipschitz in $\xi$ on $B\times B$: there exists $L_G>0$, depending only on
$\Delta$ and $\operatorname{diam}(B)$, such that
\begin{equation}
|G(\xi,y)-G(\xi',y)|\le L_G |\xi-\xi'|
\qquad
\text{for all }\xi,\xi',y\in B.
\end{equation}

Now fix $(t,x,\xi)$ and $(t',x',\xi')$. Then
\begin{align}
|h_{t,x,\xi}(z,y)-h_{t',x',\xi'}(z,y)|
&\le |\chi(t,x,y)-\chi(t',x',y)|\ G(\xi,y)\ |K_{h_1}(z-\xi)| \\
&\quad + \chi(t',x',y)\ |G(\xi,y)-G(\xi',y)|\ |K_{h_1}(z-\xi)| \\
&\quad + \chi(t',x',y)\ G(\xi',y)\ |K_{h_1}(z-\xi)-K_{h_1}(z-\xi')|.
\end{align}
Using $\chi\le1$, $G\le C_F$, the two Lipschitz bounds above, and
$|K_{h_1}(\cdot-\xi)|_{L_2(P)}\le \sqrt{C_f}\ |K|_2/h_1^{d/2}$, together with
$|K_{h_1}(\cdot-\xi)-K_{h_1}(\cdot-\xi')|_{L_2(P)}
\le \sqrt{C_f}\ \frac{L_K\sqrt{2^d v_d}\ R_K^{d/2}}{h_1^{d/2+1}}\ |\xi-\xi'|$,
we obtain
\begin{equation}
|h_{t,x,\xi}-h_{t',x',\xi'}|_{L_2(P)}
\le
\sigma_{h_1}\Big(A_1 |t-t'|+A_2 |x-x'|+A_3 \frac{|\xi-\xi'|}{h_1}\Big),
\end{equation}
for constants $A_1,A_2,A_3>0$ depending only on $K$, $d$, $\eta$, $\Delta$, and
$\operatorname{diam}(B)$.

For the unbounded $x$-parameter by exponential decay, let
$\operatorname{dist}(x,B):=\inf_{y\in B}|x-y|$.
Since $\Delta(t)\le \Delta$ and $y\in B$, we have
$\chi(t,x,y)\le e^{-\operatorname{dist}(x,B)^2/(2\Delta)}$, thus
$|h_{t,x,\xi}|_{L_2(P)}
\le \sigma_{h_1}\ e^{-\operatorname{dist}(x,B)^2/(2\Delta)}$
uniformly in $(t,\xi)$. Given $\varepsilon\in(0,1]$, define
$r_\varepsilon:=\sqrt{2\Delta\log(4/\varepsilon)}$. If $\operatorname{dist}(x,B)\ge r_\varepsilon$,
then $|h_{t,x,\xi}|_{L_2(P)}\le \frac{\varepsilon}{4}\sigma_{h_1}$. Therefore, up to an additional
ball of radius $\varepsilon\sigma_{h_1}/4$ around the zero function, it suffices to cover the
parameter set with $x\in B^{+}_{r_\varepsilon}:=\{x\in\R^d:\operatorname{dist}(x,B)\le r_\varepsilon\}$.

Choose a grid in $t\in[s,u-\eta]$ with mesh $\varepsilon/(4A_1)$, a grid in
$x\in B^{+}_{r_\varepsilon}$ with mesh $\varepsilon/(4A_2)$, and a grid in $\xi\in B_\rho$ with mesh
$\varepsilon h_1/(4A_3)$. 

By the Lipschitz estimate, this induces an $L_2(P)$
$\varepsilon\sigma_{h_1}$-net of $\mathcal H_{h_1}$. Equivalently, the covering number is bounded by the product of coverings in the $t$-, $x$-, and $\xi$-coordinates:
$N(\varepsilon \sigma_{h_1},\mathcal H_{h_1},L_2(P))
\le
N_t(\varepsilon)\,N_x(\varepsilon)\,N_\xi(\varepsilon),
$ with $
\log N_t(\varepsilon)\lesssim \log(C/\varepsilon),\ 
\log N_x(\varepsilon)\lesssim d\log(C/\varepsilon),\ 
\log N_\xi(\varepsilon)\lesssim d\log\!(C/(\varepsilon h_1))$.
 Since
$r_\varepsilon\lesssim 1+\sqrt{\log(1/\varepsilon)}$, there exists $C_K>0$, depending only on
$K$, $d$, $\eta$, $\Delta$, and $\operatorname{diam}(B)$, such that
$\log N\!\left(\varepsilon\sigma_{h_1},\mathcal H_{h_1},L_2(P)\right)
\le d\log\!\Big(\frac{C_K}{\varepsilon h_1}\Big)
+ C\log\!\Big(\frac{C_K}{\varepsilon}\Big)$,
for a universal constant $C>0$. In particular, after enlarging $C_K$ if necessary,
$\log N\!\left(\varepsilon\sigma_{h_1},\mathcal H_{h_1},L_2(P)\right)
\le C\ d\log\!\Big(\frac{C_K}{\varepsilon h_1}\Big)$.

We now apply standard symmetrization and Dudley's entropy integral~\cite{vdvwellner1996empirical},
together with Bousquet's inequality~\cite{bousquet2002bennett}, using envelope $U_{h_1}$
and variance proxy $\sigma_{h_1}^2$; see also~\cite{boucheron2013concentration}.
By symmetrization and Dudley’s entropy integral, there is a universal $c_0>0$ such that
\begin{equation}
\mathbb E\sup_{h\in\mathcal H_{h_1}}|\mathbb G_M h|
\le
\frac{2c_0}{\sqrt M}
\int_0^{\sigma_{h_1}}\sqrt{\log N(\tau,\mathcal H_{h_1},L_2(P))}\ d\tau.
\end{equation}
Using the entropy bound above and the change of variables $\tau=\varepsilon\sigma_{h_1}$,
\begin{equation}
\mathbb E\sup_{h\in\mathcal H_{h_1}}|\mathbb G_M h|
\le
C\ C_F\ \sqrt{C_f}\ |K|_2
\sqrt{\frac{d\bigl(1+\log(C_K/h_1)\bigr)}{M h_1^{d}}},
\end{equation}
for a universal constant $C>0$. 

Finally, Bousquet’s inequality for the class $\mathcal H_{h_1}$ with envelope $U_{h_1}$ and variance
proxy $\sigma_{h_1}^2$ yields that, with probability at least $1-\delta$,
\begin{equation}
\sup_{h\in\mathcal H_{h_1}}|\mathbb G_M h|
\le
C_1\ C_F\ \sqrt{C_f}\ |K|_2
\sqrt{\frac{d\log\!\big(\tfrac{C_K}{h_1}\big)+\log\!\big(\tfrac{2}{\delta}\big)}{M h_1^{d}}}
+
C_2\ \frac{C_F\ \mu_\infty(K)}{h_1^{d}}
\frac{d\log\!\big(\tfrac{C_K}{h_1}\big)+\log\!\big(\tfrac{2}{\delta}\big)}{M}.
\end{equation}

Combining this stochastic bound with the bias bound proved above proves the lemma.
\end{proof}

\subsection{Uniform weighted bound for $\widehat g_2$}

\begin{lemma}[Uniform weighted bound for $\widehat g_2$]
\label{lem:g2}
Recall $C_F$ and $C_Y$ from Remark \ref{rem:bounded-support}. 
Then, with $\ell_\beta$ as in Assumption 3.1, there exist constants $C_1,C_2,C_K>0$,
where $C_K$ depends only on $K,d,\eta,\Delta$, and $\operatorname{diam}(B)$, such that for any $\delta\in(0,1)$, with probability at
least $1-\delta$,
\begin{align}
\sup_{t\in[s,u-\eta],\ x\in\mathbb R^d,\ \xi\in B_\rho}
\big|\widehat g_2(t,x;\xi)-g_2(t,x;\xi)\big|
&\le
\frac{C_B C_Y C_F L_p}{\ell_\beta!}\ \mu_\beta(K)\ h_2^\beta \\
&\quad+
C_1\ C_Y C_F\ \sqrt{C_f}\ |K|_2
\sqrt{\frac{d\log\!\big(\tfrac{C_K}{h_2}\big)+\log\!\big(\tfrac{2}{\delta}\big)}{M h_2^{d}}} \\
&\quad+
C_2\ \frac{C_Y C_F\ \mu_\infty(K)}{h_2^{d}}
\frac{d\log\!\big(\tfrac{C_K}{h_2}\big)+\log\!\big(\tfrac{2}{\delta}\big)}{M}.
\end{align}
\end{lemma}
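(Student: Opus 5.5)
The plan is to run the proof of \Cref{lem:g1} again with the scalar weight $F$ replaced by the vector weight $y\,F(t,\xi,x,y)$. The only new ingredient is the uniform bound $|y|\le C_Y$ on $B$. It gives an extra factor $C_Y$ in the bias constant, in the envelope and in the variance proxy, and it leaves the entropy order unchanged. Since $\hat g_2$ takes values in $\R^d$, I would argue coordinatewise, with weights $y_k F(t,\xi,x,y)$ for $k=1,\dots,d$. I would then recombine the coordinates. Either a union bound at level $\delta/d$ is used, with the resulting $\log d$ absorbed into $C_K$ and the constants, or the index $k$ is added to the parameter set, which multiplies the covering number by $d$.

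For the bias, change variables $u=(z-\xi)/h_2$ and set $H^{(2)}_{t,x,\xi}(z):=\int_B y\,F(t,\xi,x,y)\,\bar p(z,y)\,dy$, so that $\E\hat g_2(t,x;\xi)=\int K(u)\,H^{(2)}_{t,x,\xi}(\xi+h_2u)\,du$ and $H^{(2)}_{t,x,\xi}(\xi)=g_2(t,x;\xi)$. Differentiating under the integral and using $|y|\le C_Y$ and $F\le C_F$ on $[s,u-\eta]\times\R^d\times B\times B$ gives $H^{(2)}_{t,x,\xi}\in\Sigma(\beta,C_BC_YC_FL_p)$. This holds uniformly in $(t,x,\xi)$. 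Taylor expansion to order $\ell_\beta$ together with the vanishing moments of $K$ then gives the bias term $\frac{C_BC_YC_FL_p}{\ell_\beta!}\mu_\beta(K)h_2^\beta$. The requirement $h_2\le\rho/R_K$ keeps $\xi+h_2u$ in the neighborhood where $\bar p$ is defined.

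For the stochastic term, define the class $\mathcal H^{(2)}_{h_2}$ of functions $(z,y)\mapsto y_kF(t,\xi,x,y)K_{h_2}(z-\xi)$. Its envelope is at most $C_YC_F\mu_\infty(K)/h_2^d$, and its $L_2(P)$ radius is $\sigma^{(2)}_{h_2}\le C_YC_F\sqrt{C_f}\,|K|_2/h_2^{d/2}$. For the Lipschitz-in-parameters estimate I use the factorization $F=\chi G$ from \Cref{lem:g1}. The factor $y_k$ does not depend on $(t,x,\xi)$, so it multiplies the three-term difference bound by at most $C_Y$. This gives $\|h_{t,x,\xi}-h_{t',x',\xi'}\|_{L_2(P)}\le\sigma^{(2)}_{h_2}(A_1|t-t'|+A_2|x-x'|+A_3|\xi-\xi'|/h_2)$ with the same $A_i$. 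The exponential-decay truncation in $x$ also carries over unchanged, because $|y_k|\chi\le C_Y e^{-\mathrm{dist}(x,B)^2/(2\Delta)}$. Hence the entropy satisfies $\log N(\varepsilon\sigma^{(2)}_{h_2},\mathcal H^{(2)}_{h_2},L_2(P))\le C\,d\log(C_K/(\varepsilon h_2))$, with $C_K$ depending only on $K,d,\eta,\Delta,\operatorname{diam}(B)$. Dudley's integral followed by Bousquet's inequality then gives the stated square-root and linear terms, each carrying the factor $C_YC_F$. Adding the bias bound completes the proof.

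I expect the only step needing real care to be the vector-valued nature of $\hat g_2$. Because the Euclidean norm of a vector is at most $\sqrt d$ times its largest coordinate, the coordinatewise approach costs a factor $\sqrt d$, and it costs a $\log d$ in the confidence term. I would absorb both into $C_1$, $C_2$ and $C_K$. The alternative is to treat the class indexed by $(t,x,\xi,k)$ directly, which inflates the covering number only by the factor $d$. Everything else is a verbatim transcription of the proof of \Cref{lem:g1} with $C_F$ replaced by $C_YC_F$.
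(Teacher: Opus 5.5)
Your proposal is correct and matches the paper's proof essentially step for step. The paper also splits $\widehat g_2$ into coordinates $\widehat g_{2,r}$ and gets the bias from $H_{t,x,\xi,r}\in\Sigma(\beta,C_BC_YC_FL_p)$. It then reruns the entropy, Dudley and Bousquet argument of \Cref{lem:g1} with envelope and variance proxy inflated by $C_Y$, and recombines the coordinates through a union bound at level $\delta/d$ and the factor $\sqrt d$, absorbing $\sqrt d$ and $\log d$ into $C_1,C_2,C_K$; your alternative of adding the coordinate index to the parameter set is an equally valid way to do that last step.
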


\begin{proof}
For 
$r\in1:d$, define
$\widehat g_{2,r}(t,x;\xi)
:=\frac1M\sum_{m=1}^M X_{u,r}^{(m)}\ F(t,\xi,x,X_u^{(m)})\ K_{h_2}(X_s^{(m)}-\xi),
$ $
g_{2,r}(t,x;\xi):=\int_B y_r\ F(t,\xi,x,y)\ p(\xi,y)\ dy$.
Then $\widehat g_2=(\widehat g_{2,1},\dots,\widehat g_{2,d})$ and $g_2=(g_{2,1},\dots,g_{2,d})$.

Fix $r\in\{1,\dots,d\}$, $t\in[s,u-\eta]$, $x\in\R^d$, and $\xi\in B_\rho$. With the change of
variables $u=(z-\xi)/h_2$,
\begin{align}
\E \hat g_{2,r}(t,x;\xi)
&=
\int_{\mathbb{R}^d} K(u)
\Bigl(\int_B y_r F(t,\xi,x,y)\,\bar p(\xi+h_2u,y)\,dy\Bigr)\,du,
\end{align}
where $\bar p(\cdot,y)$ is the extension from Assumption~\ref{ass:pair}(i). Define
$H_{t,x,\xi,r}(z):=\int_B y_r F(t,\xi,x,y)\,\bar p(z,y)\,dy$.
Since $|y_r|\le C_Y$ on $B$ and $\bar p(\cdot,y)\in\Sigma(\beta,L_p)$ uniformly in $y\in B$, the
map $z\mapsto H_{t,x,\xi,r}(z)$ belongs to
$\Sigma(\beta,C_BC_YC_FL_p)$ uniformly over
$(t,x,\xi)\in [s,u-\eta]\times\mathbb R^d\times B_\rho$.
With $\ell_\beta=\max\{m\in\mathbb N_0:m<\beta\}$, Taylor expansion to order
$\ell_\beta$, together with
$\int_{\mathbb R^d}u^\alpha K(u)\,du=0,\ 1\le|\alpha|\le \ell_\beta$,
yields
\begin{equation}
|\mathbb E\hat g_{2,r}(t,x;\xi)-H_{t,x,\xi,r}(\xi)|
\le
\frac{C_BC_YC_FL_p}{\ell_\beta!}\mu_\beta(K)h_2^\beta .
\end{equation}
Since $\xi\in B_\rho\subset B$ and $\bar p(\xi,y)=p(\xi,y)$ on $B$, we have
$H_{t,x,\xi,r}(\xi)=g_{2,r}(t,x;\xi)$.
Thus
\begin{equation}
\sup_{t\in[s,u-\eta],\,x\in\mathbb R^d,\,\xi\in B_\rho}
|\mathbb E\hat g_{2,r}(t,x;\xi)-g_{2,r}(t,x;\xi)|
\le
\frac{C_BC_YC_FL_p}{\ell_\beta!}\mu_\beta(K)h_2^\beta .
\end{equation}
Now consider the scalar weighted class
\begin{equation}
\mathcal H^{(r)}_{h_2}
:=
\left\{
h^{(r)}_{t,x,\xi}(z,y):=
y_r F(t,\xi,x,y)K_{h_2}(z-\xi):
(t,x,\xi)\in [s,u-\eta]\times\mathbb R^d\times B_\rho
\right\}.
\end{equation}
The proof of \Cref{lem:g1} applies verbatim to $\mathcal H_{h_2}^{(r)}$, the only change
being the extra scalar factor $y_r$, which is uniformly bounded by $C_Y$ on $B$. Therefore, we have
$U_{h_2}^{(r)}\le C_Y C_F\ \mu_\infty(K)/h_2^d$, 
$\sigma_{h_2}^{(r)}\le C_Y C_F\sqrt{C_f}\ |K|_2/h_2^{d/2}$, and the entropy bound has the same
form as in \Cref{lem:g1}, namely
\begin{equation}
\log N\!\left(\varepsilon\ \sigma_{h_2}^{(r)},\mathcal H_{h_2}^{(r)},L_2(P)\right)
\le
C\ d\log\!\Big(\frac{C_K}{\varepsilon h_2}\Big).
\end{equation}
Hence, by the same symmetrization, Dudley-integral, and Bousquet-inequality argument as in
\Cref{lem:g1}, for any $\delta\in(0,1)$, with probability at least $1-\delta/d$,
\begin{align}
\sup_{t\in[s,u-\eta],\ x\in\R^d,\ \xi\in B_\rho}
\big|\widehat g_{2,r}(t,x;\xi)-\mathbb E\widehat g_{2,r}(t,x;\xi)\big|
&\le
C_1'\ C_Y C_F\ \sqrt{C_f}\ |K|_2
\sqrt{\frac{d\log\!\big(\tfrac{C_K}{h_2}\big)+\log\!\big(\tfrac{2d}{\delta}\big)}{M h_2^{d}}}
\nonumber\\
&\quad+
C_2'\ \frac{C_Y C_F\ \mu_\infty(K)}{h_2^{d}}
\frac{d\log\!\big(\tfrac{C_K}{h_2}\big)+\log\!\big(\tfrac{2d}{\delta}\big)}{M},
\label{eq:g2-stoch-r}
\end{align}
where $C_1',C_2'>0$ depend only on universal numerical constants.

Combining the last two bounds, we get that, with probability at least $1-\delta/d$,
\begin{align}
\sup_{t\in[s,u-\eta],\ x\in\R^d,\ \xi\in B_\rho}
\big|\widehat g_{2,r}(t,x;\xi)-g_{2,r}(t,x;\xi)\big|
&\le
\frac{C_B C_Y C_F L_p}{\ell_\beta!}\ \mu_\beta(K)\ h_2^\beta
\nonumber\\
&\quad+
C_1'\ C_Y C_F\ \sqrt{C_f}\ |K|_2
\sqrt{\frac{d\log\!\big(\tfrac{C_K}{h_2}\big)+\log\!\big(\tfrac{2d}{\delta}\big)}{M h_2^{d}}}
\nonumber\\
&\quad+
C_2'\ \frac{C_Y C_F\ \mu_\infty(K)}{h_2^{d}}
\frac{d\log\!\big(\tfrac{C_K}{h_2}\big)+\log\!\big(\tfrac{2d}{\delta}\big)}{M}.
\label{eq:g2-coordinate-bound}
\end{align}

By a union bound over $r=1,\dots,d$, with probability at least $1-\delta$, the bound
\eqref{eq:g2-coordinate-bound} holds simultaneously for all coordinates. On that event,
\begin{equation}
\sup_{t,x,\xi}\big|\widehat g_2(t,x;\xi)-g_2(t,x;\xi)\big|
\le
\sqrt d\ 
\max_{1\le r\le d}
\sup_{t,x,\xi}\big|\widehat g_{2,r}(t,x;\xi)-g_{2,r}(t,x;\xi)\big|.
\end{equation}
Since the paper allows constants to depend on $d$, the factors $\sqrt d$ and $\log d$ can be
absorbed into $C_1,C_2$ and $C_K$ by enlarging them if necessary. Thus, with probability at least
$1-\delta$, the displayed bound in the statement of the lemma holds, which proves the result.
\end{proof}

\section{Deterministic Ratio Stability}
\label{app:ratio}

\begin{lemma}[Deterministic denominator stability]
\label{lem:denom-stability}
Under \Cref{ass:pair}(ii), fix $(t,x,\xi)\in Q_{R,\eta,\rho}$. On the event
$\left\{\hat f_1(\xi)\ge \frac{1}{2}f_{\min}\right\}$,
we have
\begin{equation}
\bigl|\hat D(t,x;\xi)-D^*(t,x;\xi)\bigr|
\le \frac{2}{f_{\min}^2}
\Big(
f(\xi)\,\bigl|\hat g_1(t,x;\xi)-g_1(t,x;\xi)\bigr|
+ |g_1(t,x;\xi)|\,\bigl|\hat f_1(\xi)-f(\xi)\bigr|
\Big).
\end{equation}
Consequently, with $D_{\min}$ defined in \Cref{ass:pair}, on the event
$\left\{\inf_{\xi\in B_\rho}\hat f_1(\xi)\ge \frac{1}{2}f_{\min}\right\}$,
if
\begin{equation}
\sup_{(t,x,\xi)\in Q_{R,\eta,\rho}}
\frac{2}{f_{\min}^2}
\Big( f(\xi)\,\bigl|\hat g_1(t,x;\xi)-g_1(t,x;\xi)\bigr|
+ |g_1(t,x;\xi)|\,\bigl|\hat f_1(\xi)-f(\xi)\bigr|
\Big)
\le \frac{D_{\min}}{2},
\end{equation}
then
\begin{equation}
\inf_{(t,x,\xi)\in Q_{R,\eta,\rho}} \hat D(t,x;\xi)\ge \frac{D_{\min}}{2}.
\end{equation}
\end{lemma}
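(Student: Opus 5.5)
The argument is a direct algebraic identity for a difference of ratios, followed by a triangle-inequality step. Fix $(t,x,\xi)\in Q_{R,\eta,\rho}$ and work on the event $\{\hat f_1(\xi)\ge f_{\min}/2\}$. Since $\hat D=\hat g_1/\hat f_1$ and $D^*=g_1/f$, we write
\begin{equation}
\hat D-D^*=\frac{\hat g_1 f-g_1\hat f_1}{\hat f_1 f}
=\frac{f(\hat g_1-g_1)-g_1(\hat f_1-f)}{\hat f_1 f},
\end{equation}
where the numerator comes from adding and subtracting $g_1 f$. For the denominator we use $f(\xi)\ge f_{\min}$ from \Cref{ass:pair}(ii) and $\hat f_1(\xi)\ge f_{\min}/2$ on the event. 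Together these give $\hat f_1 f\ge f_{\min}^2/2$, so $1/(\hat f_1 f)\le 2/f_{\min}^2$. Applying the triangle inequality to the numerator then yields the first displayed bound. The only point needing care is that $\hat f_1$ is positive on the event, so dividing is legitimate and no sign issue arises. We also use $f\ge0$ to write $f|\hat g_1-g_1|$ in place of $|f(\hat g_1-g_1)|$.

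For the consequence, we work on the uniform event $\{\inf_{\xi\in B_\rho}\hat f_1(\xi)\ge f_{\min}/2\}$. There the pointwise bound holds at every point of $Q_{R,\eta,\rho}$, because every $\xi$ appearing in $Q_{R,\eta,\rho}$ lies in $B_\rho$. Under the hypothesis that the supremum of the right-hand side is at most $D_{\min}/2$, we get $|\hat D-D^*|\le D_{\min}/2$ uniformly on $Q_{R,\eta,\rho}$.

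We then invoke the deterministic floor from \Cref{ass:pair}: $D^*\ge 2D_{\min}$ on $[s,u-\eta]\times B_R\times B$, and $B_\rho\subset B$. Hence
\begin{equation}
\hat D\ge D^*-|\hat D-D^*|\ge 2D_{\min}-\tfrac{D_{\min}}{2}\ge \tfrac{D_{\min}}{2},
\end{equation}
and taking the infimum over $Q_{R,\eta,\rho}$ concludes.

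There is no real obstacle. The only step to check is that the floor $D^*\ge 2D_{\min}$ from \Cref{ass:pair} is stated over a set containing $Q_{R,\eta,\rho}$, which holds since $B_\rho\subset B$. This yields the (slightly stronger) bound $\tfrac32 D_{\min}$, which in particular gives the claimed $D_{\min}/2$.
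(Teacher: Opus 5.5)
Your proof is correct and follows the paper's argument essentially step for step: the same ratio-difference identity, the bound $\hat f_1(\xi)f(\xi)\ge f_{\min}^2/2$, the triangle inequality, and then the population floor on $D^*$ from \Cref{ass:pair}. The only difference is cosmetic: you use the floor $D^*\ge 2D_{\min}$ exactly as stated in the assumption and obtain $\tfrac32 D_{\min}$, whereas the paper uses only the weaker $D^*\ge D_{\min}$, which already gives $D_{\min}/2$.
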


\begin{proof}
Fix $(t,x,\xi)\in Q_{R,\eta,\rho}$, we have
\begin{equation}
\hat D-D^*
=\frac{\hat g_1}{\hat f_1}-\frac{g_1}{f(\xi)}
=\frac{(\hat g_1-g_1)\ f(\xi)-g_1\ (\hat f_1-f(\xi))}{\hat f_1\ f(\xi)}.
\end{equation}
On the event $\{\hat f_1(\xi)\ge f_{\min}/2\}$, \Cref{ass:pair} gives
$\hat f_1(\xi)\ f(\xi)\ge \frac{f_{\min}^2}{2}$,
and therefore
\begin{equation}
\big|\hat D-D^*\big|
\le
\frac{2}{f_{\min}^{\ 2}}
\Big(
f(\xi)\ \big|\hat g_1-g_1\big|
+
|g_1|\ \big|\hat f_1-f(\xi)\big|
\Big).
\end{equation}
This proves the first claim. Now, assume
$\inf_{\xi\in B_\rho}\hat f_1(\xi)\ge \frac{f_{\min}}{2}$
and that the displayed supremum is at most $D_{\min}/2$. Then, for every
$(t,x,\xi)\in Q_{R,\eta,\rho}$, the first part gives
$\big|\hat D(t,x;\xi)-D^*(t,x;\xi)\big|\le \frac{D_{\min}}{2}$.
Using \Cref{ass:pair},
$D^*(t,x;\xi)\ge D_{\min}$,
hence
\begin{equation}
\hat D(t,x;\xi)
\ge D^*(t,x;\xi)-\big|\hat D(t,x;\xi)-D^*(t,x;\xi)\big|
\ge D_{\min}-\frac{D_{\min}}{2}
=\frac{D_{\min}}{2}.
\end{equation}
Taking the infimum over $(t,x,\xi)\in Q_{R,\eta,\rho}$ proves the claim.
\end{proof}

\begin{proposition}[Deterministic ratio-stability transfer]
\label{prop:ratio}
Fix $(t,x,\xi)\in Q_{R,\eta,\rho}$. On the event
\begin{equation}
\mathcal E(t,x,\xi):=\Big\{
\hat D(t,x;\xi)\ge \tfrac12 D_{\min},\ 
\hat f_1(\xi)\ge \tfrac12 f_{\min},\ 
\hat f_2(\xi)\ge \tfrac12 f_{\min}
\Big\},
\end{equation}
we have
\begin{equation}
|\hat a-a^\ast|
\le
\frac{4}{\Delta(t)\ D_{\min}}
\left(
\frac{f(\xi)\ |\hat g_2-g_2|+|g_2|\ |\hat f_2-f(\xi)|}{f_{\min}^{\ 2}}
+
|Q^\ast|\ 
\frac{f(\xi)\ |\hat g_1-g_1|+|g_1|\ |\hat f_1-f(\xi)|}{f_{\min}^{\ 2}}
\right).
\end{equation}
\end{proposition}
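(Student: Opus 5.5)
The plan is to write the drift error as the error of the ratio $\hat N/\hat D$ against $N^*/D^*$, split this ratio error into a numerator part and a denominator part, and then reuse the algebra of \Cref{lem:denom-stability} on each part. The factor $1/\Delta(t)$ and the common $-x$ term simply cancel in the difference. From \eqref{eq:adrift-pop} and \eqref{eq:adrift-hat},
\[
\hat a(t,x;\xi)-a^*(t,x;\xi)=\frac{1}{\Delta(t)}\Bigl(\frac{\hat N}{\hat D}-\frac{N^*}{D^*}\Bigr).
\]
The event $\mathcal E(t,x,\xi)$ guarantees $\hat D>0$, $\hat f_1>0$ and $\hat f_2>0$, so every quantity here is well defined. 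I would use the add-and-subtract identity
\[
\frac{\hat N}{\hat D}-\frac{N^*}{D^*}
=\frac{\hat N-N^*}{\hat D}+N^*\Bigl(\frac{1}{\hat D}-\frac{1}{D^*}\Bigr)
=\frac{(\hat N-N^*)-Q^*(\hat D-D^*)}{\hat D},
\]
where the last form uses $N^*/D^*=Q^*$. Since $\hat N,N^*$ are $\R^d$-valued and $\hat D,D^*$ are scalars, this identity holds coordinatewise. Taking Euclidean norms and using $\hat D\ge D_{\min}/2$ on $\mathcal E(t,x,\xi)$ gives
\[
|\hat a-a^*|\le\frac{2}{\Delta(t)D_{\min}}\Bigl(|\hat N-N^*|+|Q^*|\,|\hat D-D^*|\Bigr).
\]

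Next I would bound the two pieces separately.
\begin{itemize}
\item For $|\hat D-D^*|$, apply the first claim of \Cref{lem:denom-stability}, which needs only $\hat f_1(\xi)\ge f_{\min}/2$. This gives
\[
|\hat D-D^*|\le\frac{2}{f_{\min}^2}\Bigl(f(\xi)|\hat g_1-g_1|+|g_1|\,|\hat f_1-f(\xi)|\Bigr).
\]
\item For $|\hat N-N^*|$, repeat the same computation with $(\hat g_2,\hat f_2)$:
\[
\hat N-N^*=\frac{(\hat g_2-g_2)f(\xi)-g_2(\hat f_2-f(\xi))}{\hat f_2\,f(\xi)}.
\]
On $\hat f_2\ge f_{\min}/2$, \Cref{ass:pair}(ii) gives $\hat f_2 f(\xi)\ge f_{\min}^2/2$. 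The triangle inequality for the vector norm then yields
\[
|\hat N-N^*|\le\frac{2}{f_{\min}^2}\Bigl(f(\xi)|\hat g_2-g_2|+|g_2|\,|\hat f_2-f(\xi)|\Bigr).
\]
\end{itemize}
Substituting both bounds, the constants $2\cdot 2$ combine into the stated prefactor $4/(\Delta(t)D_{\min})$, which is exactly the claim.

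I do not expect a genuine obstacle, since the argument is purely deterministic. The only points needing care are bookkeeping:
\begin{itemize}
\item keep $g_2$ and $Q^*$ vector-valued, which only requires $|Q^*(\hat D-D^*)|=|Q^*|\,|\hat D-D^*|$;
\item make sure the lower bound used on $\hat D$ is the empirical one, $D_{\min}/2$, from the event, rather than the population floor.
\end{itemize}
\Cref{prop:ratio-transfer} in the main text then follows from this pointwise statement. Indeed, the event $\mathcal E$ there, being an infimum over $B_\rho$ and $Q_{R,\eta,\rho}$, is contained in $\mathcal E(t,x,\xi)$ for every $(t,x,\xi)\in Q_{R,\eta,\rho}$.
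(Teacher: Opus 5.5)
Your proposal is correct and follows the paper's proof essentially step for step. It uses the same decomposition $\hat Q-Q^*=\bigl[(\hat N-N^*)-Q^*(\hat D-D^*)\bigr]/\hat D$ with the empirical floor $\hat D\ge D_{\min}/2$, and the same quotient bounds for $\hat N-N^*$ and $\hat D-D^*$ via $\hat f_j(\xi)f(\xi)\ge f_{\min}^2/2$. The only cosmetic difference is that you cite the first claim of \Cref{lem:denom-stability} for the denominator piece, whereas the paper rederives it inline.
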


\begin{proof}
Fix $(t,x,\xi)\in Q_{R,\eta,\rho}$ and write $\Delta(t):=u-t$.
Recall
$|\hat a-a^\ast|=\frac{1}{\Delta(t)}\ |\hat Q-Q^\ast|$. Using
\begin{equation}
\hat Q-Q^*
=\frac{\hat N}{\hat D}-\frac{N^*}{D^*}
=\frac{\hat N-N^*}{\hat D}
+N^*\!\left(\frac{1}{\hat D}-\frac{1}{D^*}\right)
=\frac{\hat N-N^*}{\hat D}-\frac{Q^*}{\hat D}(\hat D-D^*),
\end{equation}
we obtain on the event $\{\hat D\ge D_{\min}/2\}$,
\begin{equation}
|\hat Q-Q^*|
\le\frac{2}{D_{\min}}\Big(|\hat N-N^*|+|Q^*|\,|\hat D-D^*|\Big).
\end{equation}
Taking absolute values and using the event $\mathcal E(t,x,\xi)$ (so $\hat D(t,x;\xi)\ge D_{\min}/2$),
\begin{equation}\label{eq:ratio-Q}
|\hat Q-Q^\ast|
\ \le\ \frac{2}{D_{\min}}\Big(|\hat N-N^\ast|+|Q^\ast|\ |\hat D-D^\ast|\Big).
\end{equation}

For $N^*(t,x;\xi)=g_2(t,x;\xi)/f(\xi)$,
$\hat N-N^\ast=\frac{\hat g_2}{\hat f_2}-\frac{g_2}{f(\xi)}
=\frac{(\hat g_2-g_2)\ f(\xi) - g_2\ (\hat f_2-f(\xi))}{\hat f_2\ f(\xi)}$.
On $\mathcal E(t,x,\xi)$, $\hat f_2(\xi)\ge f_{\min}/2$ and \Cref{ass:pair} gives $f(\xi)\ge f_{\min}$, so
$\hat f_2(\xi)\ f(\xi)\ge \frac{f_{\min}^2}{2}$,
and therefore
\begin{equation}\label{eq:ratio-N}
|\hat N-N^\ast|
\ \le\ \frac{2}{f_{\min}^{\ 2}}
\Big(f(\xi)\ |\hat g_2-g_2|+|g_2|\ |\hat f_2-f(\xi)|\Big).
\end{equation}
Similarly, for $D^*=\frac{g_1}{f(\xi)}$,
$\hat D-D^\ast=\frac{\hat g_1}{\hat f_1}-\frac{g_1}{f(\xi)}
=\frac{(\hat g_1-g_1)\ f(\xi) - g_1\ (\hat f_1-f(\xi))}{\hat f_1\ f(\xi)}$,
and on $\mathcal E(t,x,\xi)$,
$\hat f_1(\xi)\ f(\xi)\ge \frac{f_{\min}^2}{2}$,
hence
\begin{equation}\label{eq:ratio-D}
|\hat D-D^\ast|
\ \le\ \frac{2}{f_{\min}^{\ 2}}
\Big(f(\xi)\ |\hat g_1-g_1|+|g_1|\ |\hat f_1-f(\xi)|\Big).
\end{equation}

Plug \eqref{eq:ratio-N}--\eqref{eq:ratio-D} into \eqref{eq:ratio-Q} to get
\begin{equation}
|\hat Q-Q^\ast|
\ \le\ \frac{4}{D_{\min}}\left(
\frac{f(\xi)\ |\hat g_2-g_2|+|g_2|\ |\hat f_2-f(\xi)|}{f_{\min}^{\ 2}}
\ +\ 
|Q^\ast|\ \frac{f(\xi)\ |\hat g_1-g_1|+|g_1|\ |\hat f_1-f(\xi)|}{f_{\min}^{\ 2}}
\right).
\end{equation}
Finally, multiply by $1/\Delta(t)$ to pass from $Q$ to the drift:
\begin{equation}
|\hat a-a^\ast|
\ \le\  \frac{4}{\Delta(t)\ D_{\min}}\left(
\frac{f(\xi)\ |\hat g_2-g_2|+|g_2|\ |\hat f_2-f(\xi)|}{f_{\min}^{\ 2}}
\ +\ 
|Q^\ast|\ \frac{f(\xi)\ |\hat g_1-g_1|+|g_1|\ |\hat f_1-f(\xi)|}{f_{\min}^{\ 2}}
\right).
\end{equation}
This is the claimed bound.
\end{proof}

\section{Uniform Kernel Deviations}
\label{app:kde}

Let $C_K>0$ denote the entropy constant appearing in \Cref{lem:kde,lem:g1,lem:g2}, and define
\begin{equation}
\widetilde r_j(\delta)
:=
h_j^\beta
+
\sqrt{\frac{d\log(\tfrac{C_K}{h_j})+\log(\tfrac{8}{\delta})}{M h_j^{d}}},
\qquad j\in\{1,2\}.
\end{equation}
We call a bandwidth pair $(h_1,h_2)$ \emph{$\delta$-admissible} if
\begin{equation}
\widetilde r_1(\delta)+\widetilde r_2(\delta)\le c_0.
\end{equation}

\begin{theorem}[Uniform finite-sample bound]
\label{thm:drift}
Assume \Cref{ass:kernel,ass:pair}. Then there exist constants
\begin{equation}
c_0>0,
\qquad
C = C(\beta,L_K,\mu_\beta(K),\mu_\infty(K),|K|_2,C_f,d,L_p,f_{\min},D_{\min},R,\eta,\rho,\Delta,\operatorname{diam}(B))
\end{equation}
such that, for every $\delta$-admissible bandwidth pair $(h_1,h_2)$, with probability at least
$1-\delta$,
\begin{equation}
\sup_{(t,x,\xi)\in Q_{R,\eta,\rho}}|\hat a-a^\ast|
\le
\frac{C}{\eta}\Bigg(
h_1^\beta+h_2^\beta
+\sqrt{\frac{d\log(\tfrac{C_K}{h_1})+\log(\tfrac{2}{\delta})}{M h_1^{d}}}
+\sqrt{\frac{d\log(\tfrac{C_K}{h_2})+\log(\tfrac{2}{\delta})}{M h_2^{d}}}
\Bigg).
\end{equation}
\end{theorem}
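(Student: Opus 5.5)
The plan is to combine the four uniform deviation bounds (\Cref{lem:kde} for $j=1,2$, \Cref{lem:g1}, \Cref{lem:g2}) on a single high-probability event. On that event we verify that the event $\mathcal E$ of \Cref{prop:ratio-transfer} holds, and then apply the deterministic transfer of \Cref{prop:ratio} pointwise before taking the supremum over $Q_{R,\eta,\rho}$.

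\textbf{Step 1 (probabilistic event).} Apply each of the four lemmas with confidence level $\delta/4$. A union bound gives an event $\Omega_\delta$ of probability at least $1-\delta$ on which all four bounds hold simultaneously. Since $\log(2/(\delta/4))=\log(8/\delta)$, every stochastic term is governed by $\widetilde\Lambda(h_j,\delta)$.

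We simplify each right-hand side using the elementary bound
\[
\frac{\widetilde\Lambda(h_j,\delta)}{Mh_j^d}\le\sqrt{\frac{\widetilde\Lambda(h_j,\delta)}{Mh_j^d}},
\]
which is valid once this ratio is at most $1$. Admissibility with $c_0\le 1$ guarantees this. Hence on $\Omega_\delta$,
\[
\sup_{B_\rho}|\hat f_j-f|\le A\,\widetilde r_j(\delta),\qquad \sup|\hat g_1-g_1|\le A\,\widetilde r_1(\delta),\qquad \sup|\hat g_2-g_2|\le A\,\widetilde r_2(\delta),
\]
for a constant $A$ collecting $C_B,C_F,C_Y,L_p,\mu_\beta(K),|K|_2,\mu_\infty(K),C_f$. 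The $g$-bounds hold with supremum over all $x\in\mathbb R^d$, in particular over $B_R$.

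\textbf{Step 2 (denominator floors).} Choose $c_0$ small enough that $A c_0\le f_{\min}/2$. Then on $\Omega_\delta$,
\[
\hat f_j\ge f-|\hat f_j-f|\ge f_{\min}/2 \quad\text{on } B_\rho .
\]
For $\hat D$, invoke \Cref{lem:denom-stability}. Its hypothesis requires
\[
\frac{2}{f_{\min}^2}\bigl(C_f A\widetilde r_1+\|g_1\|_\infty A\widetilde r_1\bigr)\le \frac{D_{\min}}{2}.
\]
Here $\|g_1\|_\infty\le C_F C_f$ by the bound $F\le C_F$. The hypothesis therefore holds after shrinking $c_0$ further, depending on $f_{\min},D_{\min},C_f,C_F$. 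Thus $\Omega_\delta\subset\mathcal E$.

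This is the only genuinely delicate point: $c_0$ must be a deterministic model constant independent of $(M,h_1,h_2,\delta)$. That holds because $D_{\min}$ is deterministic by \Cref{ass:pair}, and because the lemma constants do not depend on the bandwidths.

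\textbf{Step 3 (transfer and rate).} On $\mathcal E$, \Cref{prop:ratio} bounds $|\hat a-a^*|$ by $\frac{4}{\Delta(t)D_{\min}f_{\min}^2}$ times a combination of the four deviations. The coefficients $f(\xi)\le C_f$, $|g_1|\le C_FC_f$, $|g_2|\le C_YC_FC_f$ are uniformly bounded. For $|Q^*|=|N^*|/D^*$ we use $|N^*|\le C_YC_F$ and $D^*\ge 2D_{\min}$, so $|Q^*|\le C_YC_F/(2D_{\min})$. Since $\Delta(t)\ge\eta$ on $[s,u-\eta]$, taking the supremum yields
\[
\sup_{Q_{R,\eta,\rho}}|\hat a-a^*|\le \frac{C}{\eta}\bigl(\widetilde r_1(\delta)+\widetilde r_2(\delta)\bigr).
\]

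Finally, we have
\[
\log(8/\delta)\le 3\log(2/\delta)\quad\text{for }\delta\in(0,1),
\]
since $\log(2/\delta)\ge\log 2$ gives $\log(8/\delta)=2\log 2+\log(2/\delta)\le 3\log(2/\delta)$. It follows that $\widetilde\Lambda\le 3\Lambda$. Absorbing $\sqrt3$ into $C$ gives the stated bound with $\Lambda(h_j,\delta)$.
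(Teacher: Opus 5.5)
Your proposal is correct and follows the paper's proof essentially step for step. It uses a $\delta/4$ union bound over \Cref{lem:kde,lem:g1,lem:g2}, absorbs the linear remainders via admissibility, chooses $c_0$ to force $\hat f_j\ge f_{\min}/2$ and (through \Cref{lem:denom-stability}) $\hat D\ge D_{\min}/2$, and then applies \Cref{prop:ratio} with $\Delta(t)\ge\eta$. Your explicit bounds on $f,g_1,g_2,Q^*$ (via $C_F,C_Y,C_f$ and $D^*\ge 2D_{\min}$) and the factor $3$ in $\log(8/\delta)\le 3\log(2/\delta)$ just make concrete what the paper attributes to ``bounded support and continuity'' and a ``harmless difference''. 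The step $\widetilde\Lambda\le 3\Lambda$ tacitly uses $h_j\le C_K$, which holds after enlarging $C_K$ if necessary.
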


\begin{remark}
\label{rem:drift-implies-finite}
Since $\Delta(t)\ge \eta$ on $ Q_{R,\eta,\rho}$, \Cref{thm:drift} implies \Cref{thm:finite}
for every $\delta$-admissible bandwidth pair $(h_1,h_2)$.
\end{remark}

\begin{proof}
By bounded support and continuity, the quantities
$f(\xi),\ g_1(t,x;\xi),\ g_2(t,x;\xi),\ Q^\ast(t,x;\xi)$
are uniformly bounded on $ Q_{R,\eta,\rho}$. Hence there exist deterministic constants
$B_f,\ B_{g_1},\ B_{g_2},\ B_Q>0$ such that
\begin{equation}
\sup_{\xi\in B_\rho} f(\xi)\le B_f,\qquad
\sup_{(t,x,\xi)\in Q_{R,\eta,\rho}} |g_1(t,x;\xi)|\le B_{g_1},
\end{equation}
\begin{equation}
\sup_{(t,x,\xi)\in Q_{R,\eta,\rho}} |g_2(t,x;\xi)|\le B_{g_2},
\qquad
\sup_{(t,x,\xi)\in Q_{R,\eta,\rho}} |Q^\ast(t,x;\xi)|\le B_Q.
\end{equation}

By \Cref{lem:kde,lem:g1,lem:g2}, there exists a constant $A_1,\ A_2,\ A_3,\ A_4>0$ and events of probability at least $1-\delta/4$ for each such that the following inequalities hold,
\begin{align}
\label{eq:g1g2f1f2-bound}
\sup_{t\in[s,u-\eta],\ x\in\R^d,\ \xi\in B_\rho}
|\hat g_1-g_1|
\le A_1\ \widetilde r_1(\delta),&\qquad
\sup_{t\in[s,u-\eta],\ x\in\R^d,\ \xi\in B_\rho}
|\hat g_2-g_2|
\le A_2\ \widetilde r_2(\delta),\\
\sup_{\xi\in B_\rho} |\hat f_1-f|
\le A_3\ \widetilde r_1(\delta),&\qquad
\sup_{\xi\in B_\rho} |\hat f_2-f|
\le A_4\ \widetilde r_2(\delta).
\end{align}
Since $(h_1,h_2)$ is $\delta$-admissible and $c_0$ is chosen small, the quantities
$\widetilde\Lambda(h_j,\delta)/(M h_j^d),\  j\in\{1,2\}$,
are bounded by $1$. Therefore
\begin{equation}
\frac{\widetilde\Lambda(h_j,\delta)}{M h_j^d}
\le
\sqrt{\frac{\widetilde\Lambda(h_j,\delta)}{M h_j^d}},
\qquad j\in\{1,2\},
\end{equation}
so the lower-order linear concentration remainders in
\Cref{lem:kde,lem:g1,lem:g2} are absorbed into the constants multiplying
$\widetilde r_j(\delta)$.

Let $\Omega$ denote the intersection of the four events in
\eqref{eq:g1g2f1f2-bound}. By a union bound, $\Pr(\Omega)\ge 1-\delta$.

We now verify the denominator-floor event on $\Omega$. First, on $\Omega$,
\begin{equation}
\inf_{\xi\in B_\rho}\hat f_1(\xi)\ge f_{\min}-A_3\ \widetilde r_1(\delta),
\qquad
\inf_{\xi\in B_\rho}\hat f_2(\xi)\ge f_{\min}-A_4\ \widetilde r_2(\delta).
\end{equation}
Next, by \Cref{lem:denom-stability}, on the event
$\Big\{\inf_{\xi\in B_\rho}\hat f_1(\xi)\ge \tfrac12 f_{\min}\Big\}$,
we have
\begin{equation}
\sup_{(t,x,\xi)\in Q_{R,\eta,\rho}} |\hat D-D^\ast|
\le
\frac{2}{f_{\min}^{\ 2}}
\Big(
B_f\ \sup_{t,x,\xi}|\hat g_1-g_1|
+
B_{g_1}\ \sup_{\xi}|\hat f_1-f|
\Big),
\end{equation}
and therefore, on $\Omega$,
$\sup_{(t,x,\xi)\in Q_{R,\eta,\rho}} |\hat D-D^\ast|
\le \frac{2}{f_{\min}^{\ 2}}
\Big(B_fA_1+B_{g_1}A_3\Big)\ \widetilde r_1(\delta)$.

Let
$c_0 := \min\!\left\{
\frac{f_{\min}}{2A_3},\
\frac{f_{\min}}{2A_4},\
\frac{D_{\min}f_{\min}^{\ 2}}{4(B_fA_1+B_{g_1}A_3)}
\right\}$.
If $(h_1,h_2)$ is $\delta$-admissible, then
$\widetilde r_1(\delta)+\widetilde r_2(\delta)\le c_0$,
hence, in particular,
$A_3\widetilde r_1(\delta)\le \frac{f_{\min}}{2},
\ A_4\widetilde r_2(\delta)\le \frac{f_{\min}}{2}$,
and
$\frac{2}{f_{\min}^{\ 2}}
\Big(B_fA_1+B_{g_1}A_3\Big)\ \widetilde r_1(\delta)\le \frac{D_{\min}}{2}$.
Therefore, on $\Omega$,
$\inf_{\xi\in B_\rho}\hat f_1(\xi)\ge \frac{f_{\min}}{2},
\ \inf_{\xi\in B_\rho}\hat f_2(\xi)\ge \frac{f_{\min}}{2},
\ \inf_{(t,x,\xi)\in Q_{R,\eta,\rho}}\hat D(t,x;\xi)\ge \frac{D_{\min}}{2}$.
In other words, on $\Omega$ the event
\begin{equation}
\mathcal E:=
\Big\{
\inf_{(t,x,\xi)\in Q_{R,\eta,\rho}}\hat D(t,x;\xi)\ge \tfrac12 D_{\min},\
\inf_{\xi\in B_\rho}\hat f_1(\xi)\ge \tfrac12 f_{\min},\
\inf_{\xi\in B_\rho}\hat f_2(\xi)\ge \tfrac12 f_{\min}
\Big\}
\end{equation}
holds.

Now fix $(t,x,\xi)\in Q_{R,\eta,\rho}$. On $\mathcal E$, \Cref{prop:ratio} gives
\begin{equation}
\label{eq:ratio-master}
|\hat a-a^\ast|
\le \frac{4}{\Delta(t)\ D_{\min}}\left(
\frac{f(\xi)\ |\hat g_2-g_2|+|g_2|\ |\hat f_2-f|}{f_{\min}^{\ 2}}
+ |Q^\ast|\ \frac{f(\xi)\ |\hat g_1-g_1|+|g_1|\ |\hat f_1-f|}{f_{\min}^{\ 2}}
\right).
\end{equation}
Since $\Omega\subset\mathcal E$, we may insert the uniform bounds
\eqref{eq:g1g2f1f2-bound} into \eqref{eq:ratio-master}. Using the deterministic bounds
$f(\xi)\le B_f,\  |g_1|\le B_{g_1},\  |g_2|\le B_{g_2},\  |Q^\ast|\le B_Q$,
we obtain on $\Omega$
\begin{equation}
|\hat a-a^*|
\le \frac{C}{\eta}\bigl(\widetilde r_1(\delta)+\widetilde r_2(\delta)\bigr),
\end{equation}
for a constant $C>0$ depending only on
\begin{equation}
(\beta,L_K,\mu_\beta(K),\mu_\infty(K),|K|_2,C_f,d,L_p,f_{\min},D_{\min},R,\eta,\Delta,\operatorname{diam}(B)).
\end{equation}
Expanding the definition of $\widetilde r_1(\delta)$ and $\widetilde r_2(\delta)$, and absorbing the harmless
difference between $\log(8/\delta)$ and $\log(2/\delta)$ into the constant $C$, gives the claimed bound.
\end{proof}

\begin{corollary}[Oracle bandwidth choice]
\label{cor:oracle}
Set $h_1=h_2=h$ and let
\begin{equation}
L := d\log(C_K M)+\log(2/\delta), \qquad
h^\star := \left(\frac{L}{M}\right)^{1/(2\beta+d)}.
\end{equation}
Then for each fixed $\delta\in(0,1)$ there exists $M_0(\delta)\ge 1$, depending on $\delta$ and the model constants, such that for all $M\ge M_0(\delta)$, the pair $(h^\star,h^\star)$ is $\delta$-admissible and, with probability at least $1-\delta$,
\begin{equation}
\sup_{(t,x,\xi)\in Q_{R,\eta,\rho}}
|\hat a(t,x;\xi)-a^*(t,x;\xi)|
\le
\frac{C}{\eta}\left(\frac{L}{M}\right)^{\beta/(2\beta+d)}.
\end{equation}
Moreover, for $\delta=M^{-\gamma}$ with fixed $\gamma>0$,
\begin{equation}
\sup_{(t,x,\xi)\in Q_{R,\eta,\rho}}
|\hat a(t,x;\xi)-a^*(t,x;\xi)|
=
O_{\P}\!\left(\left(\frac{\log M}{M}\right)^{\beta/(2\beta+d)}\right).
\end{equation}
\end{corollary}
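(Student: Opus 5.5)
The plan is to specialize \Cref{thm:drift} to the diagonal choice $h_1=h_2=h^\star$. There are three steps: verify $\delta$-admissibility of $(h^\star,h^\star)$ for large $M$, compare the stochastic term at $h^\star$ with the bias term, and then take $\delta=M^{-\gamma}$.

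\textbf{Admissibility.} With $h=h^\star=(L/M)^{1/(2\beta+d)}$, we have $Mh^d=M^{2\beta/(2\beta+d)}L^{d/(2\beta+d)}$. Since $L\le d\log(C_KM)+\log(2/\delta)$ grows only logarithmically for fixed $\delta$, it follows that $h^\star\to0$ and $Mh^{\star d}\to\infty$. Next I bound the entropy term. For $M$ large we have $h^\star\ge M^{-1}$, because $L\ge d\log C_K$ is bounded below and $(L/M)^{1/(2\beta+d)}\ge M^{-1/(2\beta+d)}(\text{const})\ge M^{-1}$. Hence
\begin{equation}
d\log(C_K/h^\star)+\log(8/\delta)\le d\log(C_KM)+\log(2/\delta)+\log 4\le 2L
\end{equation}
for large $M$, using that $L$ is bounded below by a positive constant once $C_KM>1$. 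It follows that
\begin{equation}
\widetilde r_1(\delta)+\widetilde r_2(\delta)\le 2\Big(h^{\star\beta}+\sqrt{2L/(Mh^{\star d})}\Big).
\end{equation}
By the definition of $h^\star$, $L/(Mh^{\star d})=h^{\star 2\beta}$, so $\sqrt{2L/(Mh^{\star d})}=\sqrt2\,h^{\star\beta}$. The right-hand side is therefore $\lesssim (L/M)^{\beta/(2\beta+d)}\to0$, and it falls below $c_0$ for all $M\ge M_0(\delta)$. This gives admissibility.

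\textbf{Rate.} Applying \Cref{thm:drift} on its probability-$(1-\delta)$ event, the bracket there has the form $2h^{\star\beta}+2\sqrt{\Lambda(h^\star,\delta)/(Mh^{\star d})}$. By the same bound, $\Lambda(h^\star,\delta)\le L$, since $h^\star\ge 1/M$. The bracket is thus at most $4h^{\star\beta}=4(L/M)^{\beta/(2\beta+d)}$. This yields the displayed bound after enlarging $C$; by \Cref{rem:drift-implies-finite}, the factor $1/\eta$ is already in place.

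\textbf{In-probability statement.} Take $\delta=M^{-\gamma}$. Then $L=d\log(C_KM)+\log 2+\gamma\log M\asymp\log M$, and the bound becomes $C'(\log M/M)^{\beta/(2\beta+d)}$ with probability $1-M^{-\gamma}\to1$. Admissibility must again be checked, since $\delta$ now depends on $M$. The argument above goes through unchanged, because it used only $L/M\to0$ and $L\gtrsim 1$, both of which hold for $L\asymp\log M$. This gives the $O_{\P}$ claim.

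The only delicate point is the uniformity of $M_0$ when $\delta$ varies with $M$. I would address it by noting that admissibility depends on $\delta$ only through $L/M\to0$, so no separate threshold argument is needed.
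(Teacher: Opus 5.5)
Your proposal is correct and follows the paper's proof essentially step for step. Like the paper, you bound $\log(C_K/h^\star)\le\log(C_KM)\le L/d$ (your observation $h^\star\ge 1/M$), use $L/(M(h^\star)^d)=(h^\star)^{2\beta}$ to get $\widetilde r_1(\delta)+\widetilde r_2(\delta)\lesssim (L/M)^{\beta/(2\beta+d)}\to0$ for admissibility, and then apply \Cref{thm:drift} with $\Delta(t)\ge\eta$. Your explicit re-check of admissibility when $\delta=M^{-\gamma}$ depends on $M$ is correct because $c_0$ and $C$ in \Cref{thm:drift} do not depend on $\delta$; the paper leaves this point implicit.
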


\begin{proof}
Since $(h^\star)^\beta=(L/M)^{\beta/(2\beta+d)}$, we have
\begin{equation}
\log\!\Big(\frac{C_K}{h^\star}\Big)
=
\log C_K + \frac{1}{2\beta+d}\log\!\Big(\frac{M}{L}\Big)
\le
\log(C_K M)
\le
\frac{L}{d}.
\end{equation}
Hence
\begin{equation}
\sqrt{\frac{d\log(C_K/h^\star)+\log(8/\delta)}{M (h^\star)^d}}
\le
\sqrt{2}\left(\frac{L}{M}\right)^{\beta/(2\beta+d)}.
\end{equation}
Therefore, for each fixed $\delta\in(0,1)$, there exists $M_0(\delta)\ge 1$ such that for all $M\ge M_0(\delta)$,
\begin{equation}
\widetilde r_1(\delta)+\widetilde r_2(\delta)\le c_0.
\end{equation}
Thus $(h^\star,h^\star)$ is $\delta$-admissible for all $M\ge M_0(\delta)$, and \Cref{thm:drift} applies. Since $\Delta(t)\ge \eta$ on $Q_{R,\eta,\rho}$,
\begin{equation}
\sup_{(t,x,\xi)\in Q_{R,\eta,\rho}}
|\hat a(t,x;\xi)-a^*(t,x;\xi)|
\le
\frac{C}{\eta}\left(\frac{L}{M}\right)^{\beta/(2\beta+d)}.
\end{equation}
If $\delta=M^{-\gamma}$, then $L=d\log(C_K M)+\gamma\log M\le (d+\gamma)\log M+O(1)$, and therefore
\begin{equation}
\left(\frac{L}{M}\right)^{\beta/(2\beta+d)}
=
O\!\left(\left(\frac{\log M}{M}\right)^{\beta/(2\beta+d)}\right).
\end{equation}
This proves the final claim.
\end{proof}

\section{Unequal-bandwidth pointwise asymptotic normality}
\label{app:clt}

In this Appendix we state and prove a pointwise CLT for the estimator \eqref{eq:adrift-hat} with
possibly different bandwidths $h_1$ and $h_2$. The main-text \Cref{thm:clt} is recovered as the
diagonal specialization $h_1=h_2=h$; see \Cref{rem:clt-diagonal} below.

Fix $(t,x,\xi)\in [s,u-\eta]\times B_R\times \operatorname{int}(B)$. Write
$U(y):=y\ F_{t,x,\xi}(y)\in\R^d,\ 
V(y):=F_{t,x,\xi}(y)\in\R$.
Then
\begin{equation}
N^*(t,x;\xi)=\E\!\big[U(X_u)\mid X_s=\xi\big],\qquad
D^*(t,x;\xi)=\E\!\big[V(X_u)\mid X_s=\xi\big],
\end{equation}
\begin{equation}
Q^*(t,x;\xi)=\frac{N^*(t,x;\xi)}{D^*(t,x;\xi)},
\qquad
a^*(t,x;\xi)=\frac{Q^*(t,x;\xi)-x}{\Delta_t}.
\end{equation}
Also let
$R(K):=\int_{\R^d}K(u)^2\ du,
\ R_\rho(K):=\int_{\R^d}K(u)K(\rho u)\ du,\ \rho>0$. Writing $U:=U(X_u)$ and $V:=V(X_u)$ for brevity, define
\begin{equation}
n_U:=\E[U\mid X_s=\xi],\qquad
n_V:=\E[V\mid X_s=\xi],\qquad
n_{UV}:=\E[UV\mid X_s=\xi],
\end{equation}
\begin{equation}
m_{UU}:=\E[UU^\top\mid X_s=\xi],\qquad
m_{V^2}:=\E[V^2\mid X_s=\xi].
\end{equation}
Then the block covariance matrix is
\begin{equation}
\setlength{\arraycolsep}{3pt}
\Omega_\rho(t,x;\xi)
:=
f(\xi)
\begin{bmatrix}
\rho^{-d}R(K)\ m_{UU}
&
R_\rho(K)\ n_{UV}
&
R_\rho(K)\ n_U
&
\rho^{-d}R(K)\ n_U
\\
R_\rho(K)\ n_{UV}^\top
&
R(K)\ m_{V^2}
&
R(K)\ n_V
&
R_\rho(K)\ n_V
\\
R_\rho(K)\ n_U^\top
&
R(K)\ n_V
&
R(K)
&
R_\rho(K)
\\
\rho^{-d}R(K)\ n_U^\top
&
R_\rho(K)\ n_V
&
R_\rho(K)
&
\rho^{-d}R(K)
\end{bmatrix}.
\end{equation}

Let
$\Gamma_t(a,b,c,e):=\frac{1}{\Delta_t}\left(\frac{ac}{be}-x\right),
\ (a,b,c,e)\in \R^d\times\R\times\R\times\R$,
where $ac$ denotes scalar multiplication of the vector $a$ by the scalar $c$. Denote by
$J_\Gamma(t,x;\xi):=
D\Gamma_t\!\big(g_2(t,x;\xi),\ g_1(t,x;\xi),\ f(\xi),\ f(\xi)\big)$
the Jacobian of $\Gamma_t$ at the population point. Explicitly, since $g_1(t,x;\xi)=f(\xi)D^*(t,x;\xi)$, set
\begin{equation}
A:=\bigl(f(\xi)D^*(t,x;\xi)\bigr)^{-1}
    =g_1(t,x;\xi)^{-1},
\qquad
B:=f(\xi)^{-1}.
\end{equation}
Then
\begin{equation}
J_\Gamma(t,x;\xi)
=
\frac{1}{\Delta_t}
\Bigl[
A I_d,\,
-AQ^*(t,x;\xi),\,
BQ^*(t,x;\xi),\,
-BQ^*(t,x;\xi)
\Bigr].
\end{equation}
This is viewed as a $d\times(d+3)$ block row. Finally, define
$\Sigma_\rho(t,x;\xi):=
J_\Gamma(t,x;\xi)\ \Omega_\rho(t,x;\xi)\ J_\Gamma(t,x;\xi)^\top$.

\begin{theorem}[Unequal-bandwidth pointwise CLT]
\label{thm:clt-unequal}
Assume \Cref{ass:kernel,ass:pair}. Let $h_1\to0$, $h_2\to0$ satisfy
$M h_1^d\to\infty,\ 
M h_2^d\to\infty,\ 
h_1^\beta\sqrt{M h_1^d}\to0,\ 
h_2^\beta\sqrt{M h_1^d}\to0$,
and suppose moreover that
$h_2/h_1\to \rho\in(0,\infty)$.
Then, for every fixed $(t,x,\xi)\in [s,u-\eta]\times B_R\times \operatorname{int}(B)$,
\begin{equation}
\sqrt{M h_1^d}\ 
\bigl(\hat a(t,x;\xi)-a^*(t,x;\xi)\bigr)
\ \Rightarrow\
\mathcal N\!\bigl(0,\Sigma_\rho(t,x;\xi)\bigr),
\end{equation}
where $\Sigma_\rho(t,x;\xi)$ is defined above.
\end{theorem}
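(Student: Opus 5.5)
The proof is a delta-method argument applied to the four-dimensional vector of kernel statistics. Fix $(t,x,\xi)$ and set
\[
T_M:=\bigl(\hat g_2,\hat g_1,\hat f_1,\hat f_2\bigr)\in\R^{d+3},\qquad
\tau:=\bigl(g_2,g_1,f(\xi),f(\xi)\bigr),
\]
so that $\hat a=\Gamma_t(T_M)$ and $a^*=\Gamma_t(\tau)$. This holds because $\hat N/\hat D=\hat g_2\hat f_1/(\hat g_1\hat f_2)$, which is exactly $\Gamma_t(a,b,c,e)$ with $a=\hat g_2$, $b=\hat g_1$, $c=\hat f_1$, $e=\hat f_2$. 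The plan has three steps:
\begin{enumerate}
\item establish the bias bound $\sqrt{Mh_1^d}\,(\E T_M-\tau)\to0$;
\item prove a multivariate Lindeberg CLT $\sqrt{Mh_1^d}\,(T_M-\E T_M)\Rightarrow\mathcal N(0,\Omega_\rho)$;
\item conclude by the delta method with $\Gamma_t$, which is $C^1$ near $\tau$ because $g_1=fD^*\ge 2f_{\min}D_{\min}>0$ and $f\ge f_{\min}$.
\end{enumerate}

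\textbf{Bias.} Since $\xi\in\operatorname{int}(B)$, for small $h_j$ the kernel window stays inside a neighborhood of $B$. The Taylor/moment argument in the proofs of Lemmas~\ref{lem:kde}, \ref{lem:g1} and \ref{lem:g2} then gives $|\E T_M-\tau|\lesssim h_1^\beta+h_2^\beta$. Multiplying by $\sqrt{Mh_1^d}$, this vanishes by the two hypotheses $h_1^\beta\sqrt{Mh_1^d}\to0$ and $h_2^\beta\sqrt{Mh_1^d}\to0$.

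\textbf{Variance.} Write $\sqrt{Mh_1^d}(T_M-\E T_M)=\sum_m (Z_m-\E Z_m)$, where
\[
Z_m=M^{-1/2}h_1^{d/2}\bigl(U(X_u^{(m)})K_{h_2}(X_s^{(m)}-\xi),\ V(X_u^{(m)})K_{h_1}(X_s^{(m)}-\xi),\ K_{h_1}(X_s^{(m)}-\xi),\ K_{h_2}(X_s^{(m)}-\xi)\bigr).
\]
Consider a generic second-moment entry $h_1^d\,\E[\phi(X_u)\psi(X_u)K_{h_i}(X_s-\xi)K_{h_j}(X_s-\xi)]$. Substituting $z=\xi+h_1w$ gives
\[
h_1^{-d}h_1^{d}\!\int \E[\phi\psi\mid X_s=\xi+h_1w]\,f(\xi+h_1w)\,K(w)\,\tfrac{h_1^d}{h_2^d}K\!\bigl(\tfrac{h_1}{h_2}w\bigr)\,dw
\]
for the mixed $(h_1,h_2)$ case. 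By continuity of $z\mapsto\int\phi\psi\,\bar p(z,y)\,dy$ (Hölder, with $F$, $U$, $V$ bounded on $B$ by $C_F$, $C_Y$) and dominated convergence, this tends to $f(\xi)\E[\phi\psi\mid\xi]$ times the appropriate constant. For the pure $h_2$ case the constant is $\rho^{-d}R(K)$. For the mixed case the substitution gives $\rho^{-d}\int K(w)K(w/\rho)\,dw$; a change of variables must then be used to reconcile this with the stated $R_\rho(K)$ convention, and I will check that the normalization matches the matrix entries of $\Omega_\rho$. The centering term $h_1^d(\E[\cdot])^2=O(h_1^d)\to0$. This reproduces $\Omega_\rho$ entry by entry.

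\textbf{Lindeberg and conclusion.} Lindeberg holds because the summands are bounded by $C M^{-1/2}h_1^{-d/2}$, using boundedness of $K$, $F$ on $B$ and $h_2\asymp h_1$, and this bound tends to $0$ since $Mh_1^d\to\infty$. Slutsky then gives the claimed limit $J_\Gamma\Omega_\rho J_\Gamma^\top$. The Jacobian is checked directly:
\[
\partial_a\Gamma=\frac{c}{be\,\Delta_t}=\frac{A}{\Delta_t},\quad
\partial_b\Gamma=-\frac{ac}{b^2e\,\Delta_t}=-\frac{AQ^*}{\Delta_t},\quad
\partial_c\Gamma=\frac{BQ^*}{\Delta_t},\quad
\partial_e\Gamma=-\frac{BQ^*}{\Delta_t},
\]
all evaluated at $\tau$, since $c=e$ there.

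The main obstacle is the bookkeeping of the cross-covariance constants between the $h_1$- and $h_2$-smoothed components, specifically matching the $\rho$-dependent factors with the stated $\Omega_\rho$. The probabilistic steps are routine.
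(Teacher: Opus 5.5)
Your proposal is correct and follows the paper's proof essentially step for step: bias control of the $(d+3)$-vector $\hat T$, entrywise identification of $\Omega_\rho$ by change of variables, Lindeberg--Feller with the $C/\sqrt{Mh_1^d}$ summand bound, and the delta method through $\Gamma_t$ with the same Jacobian. The only differences are minor. You absorb the bias by Slutsky and apply the delta method at the fixed point $\tau$, whereas the paper centers at $\E\hat T$ and transfers the bias separately. The normalization you flag is immediate: substituting $w=\rho u$ gives $\rho^{-d}\int K(w)K(w/\rho)\,dw=\int K(u)K(\rho u)\,du=R_\rho(K)$, which the paper obtains directly by substituting $z=\xi+h_2u$.
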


\begin{remark}[Diagonal specialization]
\label{rem:clt-diagonal}
If $h_1=h_2=h$, then $\rho=1$ and $R_\rho(K)=R(K)$. In this case, a direct simplification of
$J_\Gamma\ \Omega_1\ J_\Gamma^\top$ yields
\begin{equation}
\Sigma_1(t,x;\xi)
=
\frac{R(K)}{f(\xi)}
\cdot
\frac{
\operatorname{Var}\!\bigl(\psi_{t,x,\xi}(X_u)\mid X_s=\xi\bigr)
}{
\Delta(t)^2\ D^*(t,x;\xi)^2
},
\end{equation}
with
\begin{equation}
\psi_{t,x,\xi}(y)=
\bigl(y-x-\Delta(t)a^*(t,x;\xi)\bigr)\ F(t,\xi,x,y).
\end{equation}
Hence \Cref{thm:clt-unequal} implies the main-text \Cref{thm:clt}.
\end{remark}

\begin{proof}[Proof of \Cref{thm:clt-unequal}]
Fix $(t,x,\xi)\in [s,u-\eta]\times B_R\times \operatorname{int}(B)$.
Since $\xi\in\operatorname{int}(B)$ and $K$ is compactly supported, for all sufficiently large $M$ we have
$\xi+h_j u\in B$ for all $u\in\operatorname{supp}(K)$ and $j\in\{1,2\}$,
so the kernel changes of variables below are valid without boundary effects.

Define
$\hat T:=\bigl(\hat g_2^\top,\hat g_1,\hat f_1,\hat f_2\bigr)^\top,
\ T:=\bigl(g_2(t,x;\xi)^\top,g_1(t,x;\xi),f(\xi),f(\xi)\bigr)^\top$. By the theorem's assumptions,
$\sqrt{M h_1^d}\,h_1^\beta \to 0,
\  \sqrt{M h_1^d}\,h_2^\beta \to 0$.
Since the componentwise bias bounds satisfy
\begin{equation}
|\mathbb E\hat g_1-g_1| \lesssim h_1^\beta,\qquad
|\mathbb E\hat g_2-g_2| \lesssim h_2^\beta,\qquad
|\mathbb E\hat f_j-f| \lesssim h_j^\beta,
\end{equation}
it follows that
$\sqrt{M h_1^d}\,|\mathbb E\hat T-T| \to 0$.
Since $\Gamma_t$ is continuously differentiable in a neighborhood of $T$, it follows that
$\sqrt{M h_1^d}\ \bigl(\Gamma_t(\E\hat T)-\Gamma_t(T)\bigr)\to0$.

For each $m$, define
\begin{equation}
Z_m:=
\begin{pmatrix}
U(X_u^{(m)})\ K_{h_2}(X_s^{(m)}-\xi)\\
V(X_u^{(m)})\ K_{h_1}(X_s^{(m)}-\xi)\\
K_{h_1}(X_s^{(m)}-\xi)\\
K_{h_2}(X_s^{(m)}-\xi)
\end{pmatrix}
\in\R^{d+3},
\qquad
\hat T=\frac1M\sum_{m=1}^M Z_m.
\end{equation}

To identify the covariance limit, write
$\widetilde n_A(z) := \int_B A(y)\,p(z,y)\,dy$
for bounded measurable $A:B\to\mathbb R$, $A:B\to\mathbb R^d$, or $A:B\to\mathbb R^{d\times d}$. Then
$\widetilde n_U(\xi)=g_2(t,x;\xi),\  \widetilde n_V(\xi)=g_1(t,x;\xi)$,
and
\begin{equation}
\widetilde n_{UU^\top}(\xi)=f(\xi)\,E[UU^\top\mid X_s=\xi],\ 
\widetilde n_{UV}(\xi)=f(\xi)\,E[UV\mid X_s=\xi],\ 
\widetilde n_{V^2}(\xi)=f(\xi)\,E[V^2\mid X_s=\xi].
\end{equation}

A standard change of variables gives
\begin{align}
h_1^d\ \E\!\big[U U^\top K_{h_2}(X_s-\xi)^2\big]
&=\frac{h_1^d}{h_2^d}\int_{\R^d} K(u)^2\ \widetilde n_{UU^\top}(\xi+h_2u)\ du
\to
\rho^{-d}R(K)\ \widetilde n_{UU^\top}(\xi), \\
h_1^d\ \E\!\big[V^2 K_{h_1}(X_s-\xi)^2\big]
&=
\int_{\R^d} K(u)^2\ \widetilde n_{V^2}(\xi+h_1u)\ du
\to
R(K)\ \widetilde n_{V^2}(\xi), \\
h_1^d\ \E\!\big[K_{h_1}(X_s-\xi)^2\big]
&\to R(K)\ f(\xi),\qquad
h_1^d\ \E\!\big[K_{h_2}(X_s-\xi)^2\big]
\to \rho^{-d}R(K)\ f(\xi).
\end{align}

Writing $\rho_M:=h_2/h_1$, the mixed terms satisfy
\begin{equation}
h_1^d\ \E\!\big[U V\ K_{h_2}(X_s-\xi)K_{h_1}(X_s-\xi)\big]
=
\int_{\R^d} K(u)\ K(\rho_Mu)\ \widetilde n_{UV}(\xi+h_2u)\ du
\to
R_\rho(K)\ \widetilde n_{UV}(\xi),
\end{equation}
by dominated convergence. Similarly,
\begin{align}
h_1^d\ \E\!\big[U\ K_{h_2}(X_s-\xi)K_{h_1}(X_s-\xi)\big]
&\to R_\rho(K)\ \widetilde n_U(\xi), \\
h_1^d\ \E\!\big[V\ K_{h_2}(X_s-\xi)K_{h_1}(X_s-\xi)\big]
&\to R_\rho(K)\ \widetilde n_V(\xi), \\
h_1^d\ \E\!\big[K_{h_2}(X_s-\xi)K_{h_1}(X_s-\xi)\big]
&\to R_\rho(K)\ f(\xi), \\
h_1^d\ \E\!\big[U\ K_{h_2}(X_s-\xi)^2\big]
&\to \rho^{-d}R(K)\ \widetilde n_U(\xi), \\
h_1^d\ \E\!\big[V\ K_{h_1}(X_s-\xi)^2\big]
&\to R(K)\ \widetilde n_V(\xi).
\end{align}

Subtracting products of expectations does not change the limit. Indeed, since
$\widehat T=\frac1M\sum_{m=1}^M Z_m$,
we have
$M h_1^d\operatorname{Cov}(\widehat T)
= h_1^d\Bigl(\mathbb{E}[Z_m Z_m^\top]
-\mathbb{E}Z_m\,\mathbb{E}Z_m^\top\Bigr)$.
The preceding displays identify the limit of
$h_1^d\mathbb{E}[Z_m Z_m^\top]$. Moreover, the components of
$\mathbb{E}Z_m$ are uniformly bounded, because $U,V$ are bounded on $B$
and the kernel averages converge to the corresponding population
quantities. Therefore
$h_1^d\,\mathbb{E}Z_m\,\mathbb{E}Z_m^\top
= h_1^d O(1)\to 0$.
Consequently,
\begin{equation}
M h_1^d\operatorname{Cov}(\widehat T)
\to
\Omega_\rho(t,x;\xi).
\end{equation}

Now define the centered triangular-array variables
$Y_{m,M}:=\sqrt{\frac{h_1^d}{M}}\ (Z_m-\E Z_m),\  m=1,\dots,M$.
Then
$\sqrt{M h_1^d}\ (\hat T-\E\hat T)=\sum_{m=1}^M Y_{m,M}$.
Because $X_s,X_u$ are supported on the bounded set $B\times B$, the weights $U(X_u)$ and $V(X_u)$
are bounded, and $K$ is bounded with compact support, there exists $C_Z<\infty$ such that
$|Z_m|\le C_Z\big(h_1^{-d}+h_2^{-d}\big)$.
Since $h_2/h_1\to\rho\in(0,\infty)$, we have $h_2^{-d}\lesssim h_1^{-d}$, and hence
\begin{equation}
|Y_{m,M}|
\le
C\ \frac{\sqrt{h_1^d/M}}{h_1^d}
=
\frac{C}{\sqrt{M h_1^d}}
\to0.
\end{equation}
Therefore, for every $\varepsilon>0$, the Lindeberg condition holds:
$\sum_{m=1}^M \E\!\left[|Y_{m,M}|^2\mathbf 1_{\{|Y_{m,M}|>\varepsilon\}}\right]\to0$.
By the multivariate Lindeberg--Feller theorem,
$\sqrt{M h_1^d}\ (\hat T-\E\hat T)
\ \Rightarrow\
\mathcal N\!\bigl(0,\Omega_\rho(t,x;\xi)\bigr)$.

By definition,
$\hat a(t,x;\xi)=\Gamma_t(\hat T),
\ a^*(t,x;\xi)=\Gamma_t(T)$.
Since
$g_1(t,x;\xi)=f(\xi)D^*(t,x;\xi)$, $f(\xi)\ge f_{\min}>0$, $D^*(t,x;\xi)\ge D_{\min}>0$
by \Cref{ass:pair}, the point
\begin{equation}
T=\bigl(g_2(t,x;\xi)^\top,g_1(t,x;\xi),f(\xi),f(\xi)\bigr)^\top
\end{equation}
lies in the open set $\R^d\times(0,\infty)\times(0,\infty)\times(0,\infty)$, on which $\Gamma_t$ is
$C^1$. Therefore, by the multivariate delta method,
\begin{equation}
\sqrt{M h_1^d}\ \bigl(\Gamma_t(\hat T)-\Gamma_t(\E\hat T)\bigr)
\ \Rightarrow\
\mathcal N\!\bigl(0,\Sigma_\rho(t,x;\xi)\bigr),
\end{equation}
with
$\Sigma_\rho(t,x;\xi)=
J_\Gamma(t,x;\xi)\ \Omega_\rho(t,x;\xi)\ J_\Gamma(t,x;\xi)^\top$.

Finally,
\begin{align}
\sqrt{M h_1^d}\ \bigl(\hat a(t,x;\xi)-a^*(t,x;\xi)\bigr)
&=
\sqrt{M h_1^d}\ \bigl(\Gamma_t(\hat T)-\Gamma_t(\E\hat T)\bigr) \\
&\quad+
\sqrt{M h_1^d}\ \bigl(\Gamma_t(\E\hat T)-\Gamma_t(T)\bigr).
\end{align}
The first term converges to $\mathcal N(0,\Sigma_\rho(t,x;\xi))$ by the delta-method argument above,
and the second vanishes by the bias bound. This proves the claim.
\end{proof}

\section{Adaptive selector and oracle inequality}
\label{app:adaptive}

In this Appendix we state and prove an adaptive oracle inequality for the estimator
\eqref{eq:adrift-hat} with possibly different bandwidths $h_1$ and $h_2$. The main-text
\Cref{thm:oracle,cor:adaptive} are recovered by restricting to the diagonal choice
$h_1=h_2=h$; see \Cref{rem:oracle-diagonal} below.

\subsection{Decoupled GL selectors: grids, deviations, and penalties}
\label{app:GL}

Let $\mathcal H_j=\{h_{j,0}q^k:\ k=0,\dots,K_j\}\subset(0,1]$ be geometric grids for
$j\in\{1,2\}$, with ratio $q\in(0,1)$, largest element $h_{j,0}\asymp 1$, and smallest element
$h_{j,K_j}\asymp M^{-1/d}$. For $h\in\mathcal H_1$, let
\begin{equation}
\hat g_1^{\ h}(t,x;\xi)
:=
\frac1M\sum_{m=1}^M F(t,\xi,x,X_u^{(m)})\ K_h(X_s^{(m)}-\xi),
\qquad
\hat f_1^{\ h}(\xi)
:=
\frac1M\sum_{m=1}^M K_h(X_s^{(m)}-\xi),
\end{equation}
and for $h\in\mathcal H_2$, let
\begin{equation}
\hat g_2^{\ h}(t,x;\xi)
:=
\frac1M\sum_{m=1}^M X_u^{(m)}\ F(t,\xi,x,X_u^{(m)})\ K_h(X_s^{(m)}-\xi),
\qquad
\hat f_2^{\ h}(\xi)
:=
\frac1M\sum_{m=1}^M K_h(X_s^{(m)}-\xi).
\end{equation}
Write $\hat a_{h_1,h_2}$ for the estimator \eqref{eq:adrift-hat} constructed from
$(\hat g_1^{\ h_1},\hat g_2^{\ h_2},\hat f_1^{\ h_1},\hat f_2^{\ h_2})$.

For technical completeness, we use the standard denominator-floor convention: on the complement of
the event
\begin{equation}
\mathcal E(h_1,h_2)
:=
\Big\{
\inf_{\xi\in B_\rho}\hat f_1^{\ h_1}(\xi)\ge \tfrac12 f_{\min},
\ \inf_{\xi\in B_\rho}\hat f_2^{\ h_2}(\xi)\ge \tfrac12 f_{\min},
\ \inf_{(t,x,\xi)\in Q_{R,\eta,\rho}}\hat D_{h_1,h_2}(t,x;\xi)\ge \tfrac12 D_{\min}
\Big\},
\end{equation}
we set the estimator equal to $0$. This convention is inactive on the good event used in the proof
below and contributes only to the negligible remainder term in the final risk bound.

\paragraph{Block errors and pairwise deviations.}
For $h\in\mathcal H_1$, define
\begin{equation}
E_1(h)
:=
\max\!\left\{
\sup_{(t,x,\xi)\in Q_{R,\eta,\rho}}
\big|\hat g_1^{\ h}(t,x;\xi)-g_1(t,x;\xi)\big|,
\ 
\sup_{\xi\in B_\rho}
\big|\hat f_1^{\ h}(\xi)-f(\xi)\big|
\right\},
\end{equation}
and for $h\in\mathcal H_2$, define
\begin{equation}
E_2(h)
:=
\max\!\left\{
\sup_{(t,x,\xi)\in Q_{R,\eta,\rho}}
\big|\hat g_2^{\ h}(t,x;\xi)-g_2(t,x;\xi)\big|,
\ 
\sup_{\xi\in B_\rho}
\big|\hat f_2^{\ h}(\xi)-f(\xi)\big|
\right\}.
\end{equation}
For $h'\le h$ in $\mathcal H_j$, define
\begin{equation}
\label{eq:Dev-app}
\mathrm{Dev}_j(h',h)
:=
\max\!\left\{
\sup_{(t,x,\xi)\in Q_{R,\eta,\rho}}
\big|\hat g_j^{\ h}(t,x;\xi)-\hat g_j^{\ h'}(t,x;\xi)\big|,
\
\sup_{\xi\in B_\rho}
\big|\hat f_j^{\ h}(\xi)-\hat f_j^{\ h'}(\xi)\big|
\right\},
\qquad j\in\{1,2\}.
\end{equation}

\paragraph{Penalties.}
Let
\begin{equation}
\Lambda_{\mathcal H}(h,\delta)
:=
d\log\!\left(\frac{C_K}{h}\right)
+
\log\!\left(\frac{4(|\mathcal H_1|+|\mathcal H_2|)}{\delta}\right),
\end{equation}
where $C_K>0$ is the entropy constant from Appendix~\ref{Appendix A}.
Let $A_j,B_{j,1},B_{j,2}>0$ be deterministic constants, depending only on
the model parameters, such that the uniform deviation bounds of
\Cref{lem:kde,lem:g1,lem:g2}, together with a union bound over
$\mathcal H_1\cup\mathcal H_2$, imply that, with probability at least
$1-\delta$,
\begin{equation}
E_j(h)
\le
A_jh^\beta
+
B_{j,1}
\sqrt{\frac{\Lambda_{\mathcal H}(h,\delta)}{Mh^d}}
+
B_{j,2}
\frac{\Lambda_{\mathcal H}(h,\delta)}{Mh^d}
\end{equation}
simultaneously for all $h\in\mathcal H_j$ and $j\in\{1,2\}$. Set
\begin{equation}
b_j(h):=A_jh^\beta,
\qquad
v_j(h):=
B_{j,1}
\sqrt{\frac{\Lambda_{\mathcal H}(h,\delta)}{Mh^d}}
+
B_{j,2}
\frac{\Lambda_{\mathcal H}(h,\delta)}{Mh^d},
\qquad j\in\{1,2\}
\end{equation}
and, for calibration constants $\kappa_j,\kappa_j'>0$ with $\kappa_j'\ge \kappa_j$,
\begin{equation}
\mathrm{pen}_j(h',h):=\kappa_j\ [\ v_j(h)+v_j(h')\ ],
\qquad
\mathrm{pen}_j(h):=\kappa_j' v_j(h),
\qquad j\in\{1,2\}.
\end{equation}

\paragraph{Bias proxies and decoupled selectors.}
For each $j\in\{1,2\}$, define the GL bias proxy
\begin{equation}
\label{eq:Bj-app}
B_j(h):=\sup_{h'\le h}\ \big\{\mathrm{Dev}_j(h',h)-\mathrm{pen}_j(h',h)\big\}_+,
\end{equation}
and select
\begin{equation}
\label{eq:selector-app}
\hat h_j \in \arg\min_{h\in\mathcal H_j}\ \Big\{\ B_j(h)+\mathrm{pen}_j(h)\ \Big\},
\qquad j\in\{1,2\}.
\end{equation}
Selection is \emph{decoupled}: we use $\hat h_1$ for $(\hat g_1,\hat f_1)$ and $\hat h_2$ for
$(\hat g_2,\hat f_2)$. Ties are broken by choosing the largest bandwidth in the argmin.

\subsection{Uniform deviations and bias inputs}
\label{app:GL-inputs}

Let $C_{\rm bias}:=(C_B L_p/\ell_\beta!)\mu_\beta(K)$, where $\ell_\beta$ is as in Assumption~\ref{ass:kernel}.
By the proofs of \Cref{lem:kde,lem:g1,lem:g2}, there exist deterministic constants
$A_j,B_{j,1},B_{j,2}>0$, depending only on
\begin{equation}
(\beta,L_K,\mu_\beta(K),\mu_\infty(K),|K|_2,C_f,d,L_p,f_{\min},D_{\min},R,\eta,\Delta,\operatorname{diam}(B)),
\end{equation}
such that the following holds. For every $\delta\in(0,1)$, with probability at least $1-\delta$,
simultaneously for all $h\in\mathcal H_j$ and $j\in\{1,2\}$,
\begin{equation}
\label{eq:block-dev}
E_j(h)\le b_j(h)+v_j(h).
\end{equation}
Moreover, uniformly in $h\in\mathcal H_j$ and $j\in\{1,2\}$,
\begin{equation}
\label{eq:block-bias}
\max\!\left\{
\sup_{(t,x,\xi)\in Q_{R,\eta,\rho}}\big|\E\hat g_j^{\ h}-g_j\big|,
\ 
\sup_{\xi\in B_\rho}\big|\E\hat f_j^{\ h}-f\big|
\right\}
\le b_j(h).
\end{equation}

\begin{theorem}[Unequal-bandwidth oracle inequality]
\label{thm:oracle-unequal}
Under \Cref{ass:kernel,ass:pair}, there exist constants $M_0\ge 1$ and
$C_1,C_2,C_3>0$ such that, for all $M\ge M_0$, the decoupled selector
$(\hat h_1,\hat h_2)$ defined by \eqref{eq:selector-app} satisfies
\begin{align}
\sup_{\nu\in\mathfrak P(\beta,L_p,f_{\min},B)}
\mathcal R_\nu\bigl(\hat a_{\hat h_1,\hat h_2}\bigr)
&\le
C_1\inf_{h_1\in\mathcal H_1}
\left\{
h_1^\beta+\sqrt{\frac{\log M}{M h_1^d}}+\frac{\log M}{M h_1^d}
\right\}\\
&\qquad\qquad+
C_2\inf_{h_2\in\mathcal H_2}
\left\{
h_2^\beta+\sqrt{\frac{\log M}{M h_2^d}}+\frac{\log M}{M h_2^d}
\right\}
+
C_3\ M^{-1}.
\end{align}
\end{theorem}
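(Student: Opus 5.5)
The proof reduces the drift risk to the block errors $E_1(\hat h_1)$ and $E_2(\hat h_2)$, bounds each block by a standard Goldenshluger--Lepski oracle argument, and then integrates the resulting high-probability bound into an expectation bound. Fix $\delta:=M^{-2}$. Let $\Omega_\delta$ be the event on which \eqref{eq:block-dev} holds simultaneously for all $h\in\mathcal H_j$ and $j\in\{1,2\}$; it has probability at least $1-\delta$. Since $|\mathcal H_j|\asymp\log M$ and $h\gtrsim M^{-1/d}$, we have $\Lambda_{\mathcal H}(h,\delta)\lesssim\log M$ uniformly on the grids, so $v_j(h)\lesssim\sqrt{\log M/(Mh^d)}+\log M/(Mh^d)$.

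\textbf{Step 1: pairwise deviations on $\Omega_\delta$.} For $h'\le h$, the triangle inequality through the expectations gives
\[
\mathrm{Dev}_j(h',h)\le b_j(h)+b_j(h')+v_j(h)+v_j(h').
\]
I use the bias bound \eqref{eq:block-bias} and the centered parts of the deviation bounds here; the latter are what the proofs of \Cref{lem:kde,lem:g1,lem:g2} actually deliver. Since $b_j$ is increasing in $h$, for $\kappa_j\ge1$ this yields $B_j(h)\le 2b_j(h)$.

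\textbf{Step 2: GL oracle bound for each block.} For an arbitrary $h\in\mathcal H_j$, set $h_\wedge:=\min(h,\hat h_j)$ and split
\[
E_j(\hat h_j)\le \mathrm{Dev}_j(h_\wedge,\hat h_j)+\mathrm{Dev}_j(h_\wedge,h)+E_j(h).
\]
Each deviation term is at most $B(\cdot)+\mathrm{pen}(\cdot,\cdot)$ at the larger bandwidth. Since $\kappa_j'\ge\kappa_j$, the pair penalties are dominated by $\mathrm{pen}_j(h)+\mathrm{pen}_j(\hat h_j)$. Using the minimizing property of $\hat h_j$, we get
\[
E_j(\hat h_j)\le 3\bigl(B_j(h)+\mathrm{pen}_j(h)\bigr)+E_j(h)\lesssim b_j(h)+v_j(h).
\]
Taking the infimum over $h$ gives the oracle bound for each block.

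\textbf{Step 3: transfer to the drift.} I need the event $\mathcal E(\hat h_1,\hat h_2)$, and this is the main obstacle: the denominator floors must hold at the random bandwidths, not merely at admissible ones. I would argue that for $M\ge M_0$ the oracle quantity $\inf_h(b_j+v_j)$ is $O((\log M/M)^{\beta/(2\beta+d)})$, which is below the threshold $c_0$ of the proof of \Cref{thm:drift}. By Step 2, $E_j(\hat h_j)$ is then small on $\Omega_\delta$. Hence $\inf\hat f_j\ge f_{\min}/2$, and \Cref{lem:denom-stability} gives $\hat D\ge D_{\min}/2$, so $\Omega_\delta\subset\mathcal E(\hat h_1,\hat h_2)$. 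On $\Omega_\delta$, \Cref{prop:ratio} together with the uniform bounds on $f,g_1,g_2,Q^*$ gives
\[
\sup_Q|\hat a-a^*|\le (C/\eta)\bigl(E_1(\hat h_1)+E_2(\hat h_2)\bigr),
\]
which is the claimed infimum bound. All constants depend only on the class parameters, so the bound is uniform over $\nu\in\mathfrak P(\beta,L_p,f_{\min},B)$.

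\textbf{Step 4: the complement.} On $\Omega_\delta^c$ the estimator is either $0$ (by the floor convention) or satisfies the floors, so its risk there is bounded by a deterministic constant $C'$. To see this, note that $\hat Q$ is a ratio of bounded-weight averages with denominators floored below, so $|\hat Q|\le C$. Also $|a^*|\le C/\eta$, since $Q^*\in\mathrm{conv}(B)$. The complement therefore contributes at most $C'\delta=C'M^{-2}\le C_3M^{-1}$.

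One delicate point remains. For tiny $h$, the floors might fail even on $\Omega_\delta$ if $\hat f$ becomes empty. This is harmless because the selected bandwidth, not an arbitrary one, is what matters. The selected bandwidth inherits smallness of $E_j(\hat h_j)$ from Step 2, and that is all Step 3 uses. Taking expectations and the supremum over $Q_{R,\eta,\rho}$ then completes the argument.
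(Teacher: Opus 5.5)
Your argument follows the paper's proof essentially step for step. You use the same event with $\delta=M^{-2}$, the same bound $B_j(h)\le 2b_j(h)$, and the same GL comparison. Your split through $h\wedge\hat h_j$ is just the paper's two-case argument written at once, since one of the two deviation terms is always $\mathrm{Dev}_j(h,h)=0$. The remaining steps also match: the floors are transferred to the selected bandwidths via \Cref{lem:denom-stability}, and the ratio transfer is applied on the good event.

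The one step that is wrong as stated is Step~4. You claim that on $\mathcal E(\hat h_1,\hat h_2)$ the quantity $\hat Q$ is deterministically bounded, because it is a ratio of bounded-weight averages with floored denominators. That holds for a nonnegative kernel, where $\hat g_2/\hat f_2$ is a convex combination of the values $X_u^{(m)}F(t,\xi,x,X_u^{(m)})$. However, \Cref{ass:kernel} allows a signed kernel, and for $\beta>2$ the moment conditions force one. Then the floor $\hat f_2\ge f_{\min}/2$ can arise from cancellation in $\sum_m K_{\hat h_2}(X_s^{(m)}-\xi)$ that does not occur in $\hat g_2$. On $\Omega_\delta^c$ nothing prevents $|\hat g_2^{\hat h_2}|$ from being of order $\hat h_2^{-d}$.

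The deterministic envelope actually available is $|\hat g_2^{\hat h_2}|\le C\hat h_2^{-d}\le CM$, because the grid stops at order $M^{-1/d}$. This gives $|\hat a|\le C(1+M)$ on $\mathcal E(\hat h_1,\hat h_2)$, so the complement contributes $C(1+M)M^{-2}\le CM^{-1}$. That is exactly where the $C_3M^{-1}$ term in the statement comes from, and it is how the paper argues. Your $M^{-2}$ is valid only for nonnegative kernels. The theorem's conclusion survives, but you need this $M$-sized envelope rather than a constant one.
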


\begin{remark}[Diagonal specialization]
\label{rem:oracle-diagonal}
If $\mathcal H_1=\mathcal H_2=\mathcal H$ and one restricts the selector to the diagonal class
$\{(h,h):h\in\mathcal H\}$, then \Cref{thm:oracle-unequal} reduces, up to a change of constants, to
the main-text \Cref{thm:oracle}. Likewise, the diagonal specialization of
\Cref{cor:adaptive-unequal} below yields the upper-bound part of the main-text
\Cref{cor:adaptive}; the matching lower bound is supplied separately in Appendix~\ref{app:lower}.
\end{remark}

\begin{proof}[Proof of \Cref{thm:oracle-unequal}]
Let $\Omega$ be the event on which \eqref{eq:block-dev} holds simultaneously for all
$h\in\mathcal H_j$ and $j\in\{1,2\}$. By a union bound over the two geometric grids, after
enlarging the constants in $v_j$ if necessary, we may assume
$\Pr(\Omega)\ge 1-\delta$.

\medskip\noindent
Fix $j\in\{1,2\}$ and $h'\le h$ in $\mathcal H_j$. On $\Omega$, the triangle inequality and
\eqref{eq:block-dev} give
\begin{equation}
\mathrm{Dev}_j(h',h)\le E_j(h)+E_j(h')
\le b_j(h)+b_j(h')+v_j(h)+v_j(h').
\end{equation}
Since $b_j(\cdot)$ is increasing and $\kappa_j\ge 1$, it follows that
$\mathrm{Dev}_j(h',h)-\mathrm{pen}_j(h',h)\le b_j(h)+b_j(h')$,
and therefore
$B_j(h)\le \sup_{h'\le h}\{b_j(h)+b_j(h')\}=2b_j(h)$.

Now fix $u\in\mathcal H_j$. We claim that on $\Omega$,
\begin{equation}
\label{eq:Ehat-u}
E_j(\hat h_j)\le C\ \bigl(b_j(u)+v_j(u)\bigr),
\end{equation}
for a constant $C$ depending only on $\kappa_j,\kappa_j'$.

If $u\le \hat h_j$, then by the triangle inequality,
\begin{equation}
E_j(\hat h_j)\le \mathrm{Dev}_j(u,\hat h_j)+E_j(u)
\le B_j(\hat h_j)+\mathrm{pen}_j(u,\hat h_j)+E_j(u).
\end{equation}
Using $\kappa_j'\ge \kappa_j$, $B_j(\hat h_j)\ge 0$, and the selector optimality
$B_j(\hat h_j)+\mathrm{pen}_j(\hat h_j)\le B_j(u)+\mathrm{pen}_j(u)$,
we obtain
$E_j(\hat h_j)\le 2\bigl(B_j(u)+\mathrm{pen}_j(u)\bigr)+E_j(u)$.
If instead $u>\hat h_j$, then
\begin{equation}
E_j(\hat h_j)\le \mathrm{Dev}_j(\hat h_j,u)+E_j(u)
\le B_j(u)+\mathrm{pen}_j(\hat h_j,u)+E_j(u),
\end{equation}
and again $\kappa_j'\ge \kappa_j$ together with
$\mathrm{pen}_j(\hat h_j)\le B_j(\hat h_j)+\mathrm{pen}_j(\hat h_j)\le B_j(u)+\mathrm{pen}_j(u)$
gives
\begin{equation}
E_j(\hat h_j)\le 2\bigl(B_j(u)+\mathrm{pen}_j(u)\bigr)+E_j(u).
\end{equation}
In both cases, using $B_j(u)\le 2b_j(u)$ and \eqref{eq:block-dev},
\begin{equation}
E_j(\hat h_j)\le C\ \bigl(b_j(u)+v_j(u)\bigr).
\end{equation}
Since $u\in\mathcal H_j$ was arbitrary, we conclude that on $\Omega$,
\begin{equation}
\label{eq:GL-final-block}
E_j(\hat h_j)
\le
C\inf_{u\in\mathcal H_j}\left\{
u^\beta
+\sqrt{\frac{\Lambda_{\mathcal H}(u,\delta)}{M u^d}}
+\frac{\Lambda_{\mathcal H}(u,\delta)}{M u^d}
\right\},
\qquad j\in\{1,2\}.
\end{equation}

\medskip\noindent
We now set $\delta=M^{-2}$. Since the grids are geometric and range from
order $1$ down to order $M^{-1/d}$, we have
$|\mathcal H_1|+|\mathcal H_2|=O(\log M)$. Therefore, uniformly over
$h\in\mathcal H_1\cup\mathcal H_2$,
\begin{equation}
\Lambda_{\mathcal H}(h,M^{-2})
=
d\log\!\left(\frac{C_K}{h}\right)
+
\log\!\left(4M^2(|\mathcal H_1|+|\mathcal H_2|)\right)
\le
C\left\{
d\log\!\left(\frac{C_K}{h}\right)+\log M
\right\}
\end{equation}
for all sufficiently large $M$. Thus the grid-union factor is absorbed
into the displayed $\log M$ rate.

Because the grids span the oracle scale, there exist $\bar h_j\in\mathcal H_j$ such that
$\bar h_j\asymp (\log M/M)^{1/(2\beta+d)}$, for $j\in\{1,2\}$.
Hence
\begin{equation}
\bar h_j^\beta
+\sqrt{\frac{\Lambda_{\mathcal H}(\bar h_j,\delta)}{M \bar h_j^d}}
\asymp
\Big(\frac{\log M}{M}\Big)^{\beta/(2\beta+d)},
\qquad
\frac{\Lambda_{\mathcal H}(\bar h_j,\delta)}{M \bar h_j^d}
\lesssim
\Big(\frac{\log M}{M}\Big)^{2\beta/(2\beta+d)},
\end{equation}
so for all sufficiently large $M$,
\begin{equation}
\inf_{u\in\mathcal H_j}\left\{
u^\beta
+\sqrt{\frac{\Lambda_{\mathcal H}(u,\delta)}{M u^d}}
+\frac{\Lambda_{\mathcal H}(u,\delta)}{M u^d}
\right\}
\lesssim
\Big(\frac{\log M}{M}\Big)^{\beta/(2\beta+d)}
\to 0.
\end{equation}
By \eqref{eq:GL-final-block}, there exists $M_0\ge 1$ such that for all $M\ge M_0$, on $\Omega$,
\begin{equation}
E_1(\hat h_1)\le \frac12\min\!\left\{f_{\min},\ \frac{D_{\min}f_{\min}^2}{4(B_f+B_{g_1})}\right\},
\qquad
E_2(\hat h_2)\le \frac{f_{\min}}{2},
\end{equation}
where $B_f:=\sup_{\xi\in B_\rho}f(\xi)$ and
$B_{g_1}:=\sup_{(t,x,\xi)\in Q_{R,\eta,\rho}}|g_1(t,x;\xi)|$.
Therefore, for all $M\ge M_0$, on $\Omega$ we have
\begin{equation}
\inf_{\xi\in B_\rho}\hat f_1^{\ \hat h_1}(\xi)\ge \frac{f_{\min}}{2},
\qquad
\inf_{\xi\in B_\rho}\hat f_2^{\ \hat h_2}(\xi)\ge \frac{f_{\min}}{2},
\end{equation}
and, by \Cref{lem:denom-stability},
\begin{equation}
\inf_{(t,x,\xi)\in Q_{R,\eta,\rho}}\hat D_{\hat h_1,\hat h_2}(t,x;\xi)\ge \frac{D_{\min}}{2}.
\end{equation}
Thus, for all $M\ge M_0$, the denominator-floor event
$\mathcal E(\hat h_1,\hat h_2)$
holds on $\Omega$.

\medskip\noindent
For $M\ge M_0$, on $\Omega$ the denominator-floor convention is inactive, and
\Cref{prop:ratio-transfer}, together with the uniform boundedness of $f$, $g_1$, $g_2$, and $Q^*$
on $ Q_{R,\eta,\rho}$, yields
\begin{equation}
\sup_{(t,x,\xi)\in Q_{R,\eta,\rho}}
\big|\hat a_{\hat h_1,\hat h_2}(t,x;\xi)-a^*(t,x;\xi)\big|
\le
C\bigl(E_1(\hat h_1)+E_2(\hat h_2)\bigr).
\end{equation}
Using \eqref{eq:GL-final-block}, we conclude that on $\Omega$,
\begin{align}
\sup_{(t,x,\xi)\in Q_{R,\eta,\rho}}
\big|\hat a_{\hat h_1,\hat h_2}(t,x;\xi)-a^*(t,x;\xi)\big|
&\le
C_1\inf_{h_1\in\mathcal H_1}
\left\{
h_1^\beta+\sqrt{\frac{\Lambda_{\mathcal H}(h_1,\delta)}{M h_1^d}}+\frac{\Lambda_{\mathcal H}(h_1,\delta)}{M h_1^d}
\right\}\\
&\qquad\qquad+
C_2\inf_{h_2\in\mathcal H_2}
\left\{
h_2^\beta+\sqrt{\frac{\Lambda_{\mathcal H}(h_2,\delta)}{M h_2^d}}+\frac{\Lambda_{\mathcal H}(h_2,\delta)}{M h_2^d}
\right\}.
\end{align}

\medskip\noindent
With this choice, $\mathbb{P}(\Omega^c)\le cM^{-2}$.
On $\Omega$ we have the previous oracle bound. It remains to control the
contribution of $\Omega^c$ to the expected risk.

The denominator-floor convention sets the estimator equal to zero only on
the complement of $\mathcal E(\widehat h_1,\widehat h_2)$, not necessarily
on all of $\Omega^c$. Nevertheless, a deterministic envelope suffices. If
$\mathcal E(\widehat h_1,\widehat h_2)$ fails, then
$\widehat a_{\widehat h_1,\widehat h_2}=0$, and the error is bounded by
$\sup_{Q_{R,\eta,\rho}}|a^*|<\infty$. If
$\mathcal E(\widehat h_1,\widehat h_2)$ holds, then
\begin{equation}
\widehat f^{\,\widehat h_2}_2(\xi)\ge f_{\min}/2,
\qquad
\widehat D^{\widehat h_1,\widehat h_2}(t,x;\xi)\ge D_{\min}/2 .
\end{equation}
Since the grids satisfy $h_{j,K_j}\gtrsim M^{-1/d}$ and $K$, $F$, and
$y$ are bounded on the relevant support/query sets,
\begin{equation}
\sup_{Q_{R,\eta,\rho}}
|\widehat g^{\,\widehat h_2}_2(t,x;\xi)|
\le C\widehat h_2^{-d}
\le CM.
\end{equation}
Therefore, on $\mathcal E(\widehat h_1,\widehat h_2)$,
$\sup_{Q_{R,\eta,\rho}}
|\widehat N^{\,\widehat h_2}(t,x;\xi)|
=
\sup_{Q_{R,\eta,\rho}}
\left|
\frac{\widehat g^{\,\widehat h_2}_2(t,x;\xi)}
{\widehat f^{\,\widehat h_2}_2(\xi)}
\right|
\le CM$,
and the denominator floor for $\widehat D$ gives
\begin{equation}
\sup_{Q_{R,\eta,\rho}}
|\widehat a_{\widehat h_1,\widehat h_2}(t,x;\xi)|
\le C(1+M).
\end{equation}
Since $a^*$ is uniformly bounded on $Q_{R,\eta,\rho}$,
\begin{equation}
\mathbb{E}\!\left[
\sup_{Q_{R,\eta,\rho}}
|\widehat a_{\widehat h_1,\widehat h_2}-a^*|
\mathbf 1_{\Omega^c}
\right]
\le C(1+M)\mathbb{P}(\Omega^c)
\le CM^{-1}.
\end{equation}
Combining this with the oracle bound on $\Omega$ proves the result.
\end{proof}

\begin{corollary}[Unequal-bandwidth adaptive upper bound]
\label{cor:adaptive-unequal}
If $\mathcal H_j=\{h_{j,0}2^{-k}:k=0,1,\dots,K_j\}$ with $h_{j,0}\asymp 1$ and
$h_{j,K_j}\asymp M^{-1/d}$ for $j\in\{1,2\}$, then, for all sufficiently large $M$,
\begin{equation}
\sup_{\nu\in\mathfrak P(\beta,L_p,f_{\min},B)}
\mathcal R_\nu\bigl(\hat a_{\hat h_1,\hat h_2}\bigr)
\lesssim
\Big(\tfrac{\log M}{M}\Big)^{\beta/(2\beta+d)}.
\end{equation}
\end{corollary}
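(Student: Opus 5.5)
The corollary follows directly from \Cref{thm:oracle-unequal}: I would evaluate each infimum at one explicit grid point of the dyadic grid. For $j\in\{1,2\}$ let
\[
h^\circ:=(\log M/M)^{1/(2\beta+d)},
\]
and let $\bar h_j\in\mathcal H_j$ be the largest grid element with $\bar h_j\le h^\circ$.

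First I check that $\bar h_j$ exists and satisfies $h^\circ/2<\bar h_j\le h^\circ$. The grid runs from $h_{j,0}\asymp 1$ down to $h_{j,K_j}\asymp M^{-1/d}$ with ratio $1/2$. Since $\beta>0$,
\[
h^\circ\gg M^{-1/d}
\quad\text{because}\quad
M^{1/d}(\log M/M)^{1/(2\beta+d)}\to\infty,
\]
and $h^\circ\to0$. Hence for $M$ large, $h^\circ$ lies inside $[h_{j,K_j},h_{j,0}]$. Consecutive grid points differ by a factor $2$, so the largest grid element below $h^\circ$ is at least $h^\circ/2$.

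Next I plug $\bar h_j$ into the bracket of \Cref{thm:oracle-unequal}.
\begin{itemize}
\item Bias term: $\bar h_j^\beta\le (h^\circ)^\beta=(\log M/M)^{\beta/(2\beta+d)}$.
\item Variance term: $\bar h_j^{-d}\le 2^d (h^\circ)^{-d}$, so
\[
\frac{\log M}{M\bar h_j^d}\le 2^d\Bigl(\frac{\log M}{M}\Bigr)^{2\beta/(2\beta+d)},
\]
and its square root is at most $2^{d/2}(\log M/M)^{\beta/(2\beta+d)}$.
\item Linear remainder: it is the square of a quantity tending to $0$, so it is of lower order.
\end{itemize}
The infimum over $\mathcal H_j$ is at most this value, so each of the two infima is $\lesssim(\log M/M)^{\beta/(2\beta+d)}$.

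Finally, $C_3M^{-1}=o\bigl((\log M/M)^{\beta/(2\beta+d)}\bigr)$ because $\beta/(2\beta+d)<1$. The constants in \Cref{thm:oracle-unequal} are uniform over $\mathfrak P(\beta,L_p,f_{\min},B)$, so the supremum over $\nu$ passes through, which proves the claim for $M\ge M_0$ large enough.

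There is no real obstacle here. The only point needing care is that the grid actually brackets the oracle scale, which the first step establishes. One should also note that the $\log M$ in \Cref{thm:oracle-unequal} already absorbs the $\log(1/h)$ and grid-cardinality factors; the proof of that theorem shows this via $\Lambda_{\mathcal H}(h,M^{-2})\lesssim\log M$ for $h\gtrsim M^{-1/d}$.
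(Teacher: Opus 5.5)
Your proposal is correct and takes essentially the same route as the paper. Both apply \Cref{thm:oracle-unequal} and evaluate each blockwise infimum at a dyadic grid point within a factor $2$ of $(\log M/M)^{1/(2\beta+d)}$, where the linear remainder is of lower order. You are somewhat more explicit than the paper: you verify that the grid actually brackets this scale, and you note that the $C_3M^{-1}$ term is negligible, which the paper leaves to ``absorbing constants''.
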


\begin{proof}
By \Cref{thm:oracle-unequal}, it suffices to optimize each blockwise term
\begin{equation}
h^\beta+\sqrt{\frac{\log M}{M h^d}}+\frac{\log M}{M h^d}
\end{equation}
over the geometric grid. Since the grid spans from order $1$ down to order $M^{-1/d}$, it contains a
bandwidth of order
$h_j^\star\asymp (\log M/M)^{1/(2\beta+d)}$, $j\in\{1,2\}$,
up to a multiplicative constant depending only on the grid ratio. At this scale, the linear term
$\log M/(M (h_j^\star)^d)$ is of strictly smaller order than
$(h_j^\star)^\beta\asymp \sqrt{\log M/(M (h_j^\star)^d)}$.
Therefore both infima in
\Cref{thm:oracle-unequal} are of order
$(\log M/M)^{\beta/(2\beta+d)}$.
Absorbing constants proves the claim.
\end{proof}

\section{Local and global minimax lower bounds}
\label{app:lower}

Here and throughout Appendix~\ref{app:lower}, we use the convention
$\ell_\beta:=\max\{m\in\mathbb N_0:m<\beta\}$ and
$\theta_\beta:=\beta-\ell_\beta\in(0,1]$. The notation
$|\cdot|_{C^\beta(B)}$ denotes the Hölder norm from
Appendix~\ref{Appendix A}, restricted to the set $B$.

In this Appendix we first prove a pivot-local lower-bound statement and then derive a global minimax lower bound by instantiating it at an explicit uniform pivot. The proof uses a standard two-point Le Cam construction
\cite{lecam1986asymptotic,yu1997assouad,tsybakov2009nonparametric} embedded into a shrinking neighborhood of the pivot, intersected with the
original global class.

\begin{proposition}[Local minimax lower bound around an interior pivot]
\label{prop:lower}
Fix
$(t^\circ,x^\circ,\xi^\circ)\in Q_{R,\eta,\rho}$.
Let $\nu_0\in\mathfrak P(\beta,L_p,f_{\min},B)$ be a pivot law with joint density $p_0$.
Assume that there exist constants $\delta_f,\delta_D,\delta_H>0$ such that
\begin{equation}
\inf_{\xi\in B}f_0(\xi)\ge f_{\min}+2\delta_f,\qquad
\inf_{(t,x,\xi)\in Q_{R,\eta,\rho}}D_0^*(t,x;\xi)\ge D_{\min}+2\delta_D,
\end{equation}
and $\sup_{y\in B}|\bar p_0(\cdot,y)|_{C^\beta(B)}\le L_p-2\delta_H$.

For $Q>0$ and $\varepsilon>0$, define
\begin{equation}
\mathcal U_M(\nu_0;Q,\varepsilon)
:=
\left\{
\nu:p_\nu=p_0+g,\ 
\sup_{y\in B}|g(\cdot,y)|_{C^\beta(B)}\le Q,\ 
|g|_{L^\infty(B\times B)}\le \varepsilon
\right\},
\end{equation}
and the local class inside the global model by
\begin{equation}
\mathfrak P_{\mathrm{loc},M}^{\cap}(\nu_0;Q,\varepsilon)
:=
\mathfrak P(\beta,L_p,f_{\min},B)\cap
\mathcal U_M(\nu_0;Q,\varepsilon).
\end{equation}

Then there exist constants $Q_0>0$, $c_{\mathrm{loc}}>0$, and $c>0$, depending on $\nu_0$ and the
fixed model parameters, such that for all sufficiently large $M$,
\begin{equation}
\inf_{\tilde a}\ \sup_{\nu\in\mathfrak P_{\mathrm{loc},M}^{\cap}\!\bigl(\nu_0;Q_0,c_{\mathrm{loc}}M^{-\beta/(2\beta+d)}\bigr)}\ \mathcal R_\nu(\tilde a)
\ \ge\
c\,M^{-\beta/(2\beta+d)}.
\end{equation}
In particular, combined with Corollary~\ref{cor:adaptive-unequal}, the selected estimator is locally minimax-optimal around $\nu_0$ for the uniform risk $\mathcal R_\nu$, up to logarithmic factors; the lower bound is witnessed by the fixed interior evaluation point $(t^\circ,x^\circ,\xi^\circ)$.
\end{proposition}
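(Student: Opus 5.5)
The plan is a two-point Le Cam argument at the fixed interior point $(t^\circ,x^\circ,\xi^\circ)$. Set $h:=M^{-1/(2\beta+d)}$ and perturb the pivot density by
\begin{equation}
g(\xi,y):=\gamma\,h^\beta\,\phi\!\Big(\frac{\xi-\xi^\circ}{h}\Big)\,\psi(y),
\end{equation}
with $\phi\in C_c^\infty(\mathbb R^d)$ supported in the unit ball and $\phi(0)=1$. The function $\psi$ is smooth on $B$, bounded, and satisfies $\int_B\psi(y)\,dy=0$, so that $p_1:=p_0+g$ still integrates to one and has the same marginal $f_1=f_0$. We also require
\begin{equation}
\int_B \big(y-Q_0^*(t^\circ,x^\circ;\xi^\circ)\big)F(t^\circ,\xi^\circ,x^\circ,y)\,\psi(y)\,dy\neq 0 .
\end{equation}
Such a $\psi$ exists: the two linear functionals $\psi\mapsto\int\psi$ and $\psi\mapsto\int(y-Q_0^*)F\psi$ are not proportional, because $(y-Q_0^*)F$ is not a constant function of $y$ on the open set $B$ where the density lives. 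We then multiply $\psi$ by a cutoff, or choose it supported where $p_0(\xi,\cdot)$ has mass bounded below, to keep $p_1\ge0$.

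Membership in the classes comes next. Since $h\le 1$, the chain rule gives $|g(\cdot,y)|_{C^\beta}\le C\gamma|\phi|_{C^\beta}|\psi|_\infty=:Q_0$ uniformly in $y$, and $|g|_\infty\le\gamma|\psi|_\infty h^\beta=c_{\rm loc}M^{-\beta/(2\beta+d)}$. Hence $\nu_1\in\mathcal U_M$. For small $\gamma$ we get $Q_0\le\delta_H$, so the Hölder norm of $\bar p_1$ is at most $L_p$. The marginal is unchanged, so $f_1\ge f_{\min}$. Positivity $p_1\ge0$ holds for $M$ large on the support of $\psi$; here I would take $\psi$ supported where $p_0\ge c>0$ near $\xi^\circ$, using continuity of $p_0$, or else bound $|g|_\infty\to0$ relative to that floor. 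The denominator satisfies $|D_1^*-D_0^*|\le C|g|_\infty/f_{\min}\le\delta_D$ for large $M$, so the slack conditions keep $\nu_1$ in $\mathfrak P(\beta,L_p,f_{\min},B)$.

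Next comes the separation of the drifts. At $\xi^\circ$ we have $N_1^*-N_0^*=\gamma h^\beta f_0(\xi^\circ)^{-1}\int yF\psi$ and similarly for $D^*$. Linearizing the ratio, which is allowed since the perturbation is $O(h^\beta)$ and $D^*\ge D_{\min}$, gives
\begin{equation}
|a_1^*-a_0^*|(t^\circ,x^\circ;\xi^\circ)=\frac{\gamma h^\beta}{\Delta(t^\circ)f_0(\xi^\circ)D_0^*}\Big|\int(y-Q_0^*)F\psi\,dy\Big|+O(h^{2\beta})\ge 2s_M:=c'h^\beta .
\end{equation}
For the information side, $\chi^2(p_1,p_0)\le\int g^2/p_0\le C\gamma^2h^{2\beta}h^d=C\gamma^2/M$. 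Tensorizing over $M$ samples gives $\chi^2(\nu_1^{\otimes M},\nu_0^{\otimes M})\le e^{C\gamma^2}-1$, which is small for small $\gamma$, so the total variation distance between the product laws is at most $1/2$.

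Le Cam's two-point bound then gives
\begin{equation}
\max_i\E_i|\tilde a(t^\circ,x^\circ;\xi^\circ)-a_i^*|\ge \frac{s_M}{2}\big(1-\mathrm{TV}\big)\ge cM^{-\beta/(2\beta+d)} .
\end{equation}
Since $\mathcal R_\nu$ is a supremum over $Q_{R,\eta,\rho}$ of pointwise risks, which is at least the risk at the fixed point, the claim follows. I expect the main obstacle to be the simultaneous choice of $\psi$ and $\gamma$: $\psi$ must preserve normalization and positivity of $p_1$ while keeping a nonzero drift functional, and the $\chi^2$ bound needs $p_0$ bounded below on the support of the perturbation. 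I would handle the positivity and $\chi^2$ issue by localizing $\psi$ to a small ball in $y$ where $p_0(\xi^\circ,\cdot)$ is bounded below, relying on the floor $f_0\ge f_{\min}+2\delta_f$ and continuity. Within that ball, $(y-Q_0^*)F$ remains non-constant, so the existence argument above still goes through.
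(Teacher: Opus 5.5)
Your route works once one step is repaired, and it differs genuinely from the paper's.

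\textbf{How the paper proceeds.} The paper uses a symmetric pair $p_\pm=p_0(1\pm u_h)/(1\pm m_h)$ with a multiplicative bump $u_h=\alpha\psi_h(\xi)\varphi(y)$. Here $\varphi$ is orthogonal to $\{1,F^\circ\}$ in $L^2(p_0(\xi^\circ,y)\,dy)$ but correlated with $y_rF^\circ$; its existence comes from real-analyticity of $F^\circ$. The multiplicative form makes positivity and $p_\pm\ll p_0$ automatic. It also bounds $\chi^2$ through $p_-\ge c\,p_0$, with no lower bound on $p_0$ itself. Orthogonality to $F^\circ$ leaves $D^*$ unchanged at the witness point, so the drift gap $2\alpha|m^\circ|/(\Delta(t^\circ)g^\circ_{1,0})$ is exact. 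The price is the renormalization $1\pm m_h$ and an $X_s$-marginal that moves by $O(\alpha)$ away from $\xi^\circ$; this is where the slack $\delta_f$ and a H\"older product estimate enter.

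\textbf{What your construction buys.} Your additive perturbation with $\int_B\psi\,dy=0$, compared against $\nu_0$ itself, preserves total mass and the whole marginal exactly. The H\"older bound is then pure scaling. In exchange $D^*$ moves at the witness point. This is harmless, since exactly $Q^\circ_1-Q^\circ_0=\gamma h^\beta\int(y-Q_0^*)F^\circ\psi\,dy/g^\circ_{1,1}$. However, you now need $p_0$ bounded below on the support of $g$, both for $p_1\ge 0$ and for $\int g^2/p_0\lesssim\gamma^2/M$.

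\textbf{The gap.} You get that lower bound from continuity of $p_0(\xi^\circ,\cdot)$ in $y$ on a small ball. But \Cref{ass:pair} imposes regularity only in $\xi$, uniformly in $y$. So $p_0(\xi^\circ,\cdot)$ is merely bounded and measurable, and it need not be bounded below on any ball. For example, take $p_0(\xi,y)=c\,\mathbf{1}_A(y)$ on $B\times B$, with $A\subset B$ of positive measure and empty interior.

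\textbf{The repair.} It is short.
\begin{itemize}
\item Let $S:=\{y\in B:p_0(\xi^\circ,y)\ge 2c\}$. Since $p_0\le L_p$, $S$ has positive Lebesgue measure once $2c\,\lambda(B)<f_0(\xi^\circ)$.
\item The uniform-in-$y$ H\"older bound in $\xi$ then gives $p_0\ge c$ on $B(\xi^\circ,r)\times S$ for some $r>0$.
\item The class constrains only $\xi$-regularity, so $\psi$ may be any bounded measurable function supported in $S$. Drop the smoothness requirement on $\psi$.
\item A $\psi$ with $\int\psi=0$ and $\int(y-Q_0^*)F^\circ\psi\,dy\neq0$ exists. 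Each coordinate $(y_r-Q^*_{0,r})F^\circ(y)$ is real analytic and non-constant, hence not a.e.\ constant on the positive-measure set $S$. This is the same analyticity device the paper uses.
\end{itemize}
With this change, your class-membership check, the $\chi^2$ tensorization, and the Le Cam step go through as written.
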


\begin{proof}
Write
$F^\circ(y):=F(t^\circ,\xi^\circ,x^\circ,y), \ \mu_0(dy):=p_0(\xi^\circ,y)\,dy$.
Since $f_0(\xi^\circ)>0$, the measure $\mu_0$ is a nonzero finite measure on $B$, absolutely
continuous with respect to Lebesgue measure.

\paragraph{1.}
Let
$S:=\operatorname{span}\{1,F^\circ\}\subset L^2(\mu_0)$.
For each $r\in\{1,\ldots,d\}$ define
$g_r(y):=y_rF^\circ(y)$.
We claim that there exists $r$ such that
\begin{equation}
g_r\notin S
\qquad\text{in }L^2(\mu_0).
\end{equation}
Suppose otherwise. Then for every $r$ there exist $a_r,b_r\in\R$ such that
$y_rF^\circ(y)=a_r+b_rF^\circ(y)
\ \mu_0$-a.e. on $B$.
Equivalently,
$h_r(y):=(y_r-b_r)F^\circ(y)-a_r=0
\  \mu_0$-a.e. on $B$.
Since $\mu_0$ is absolutely continuous and nonzero, $h_r$ vanishes on a set of positive Lebesgue
measure. The function $h_r$ is real analytic in $y$ because $F^\circ$ is real analytic. A nontrivial
real-analytic function on $\R^d$ cannot vanish on a set of positive Lebesgue measure, so $h_r$ must
vanish identically. This is impossible: if $a_r=0$, then $y_r\equiv b_r$, contradiction; if
$a_r\neq 0$, then
$F^\circ(y)=a_r/(y_r-b_r)$,
but the right-hand side is singular on the hyperplane $y_r=b_r$, whereas $F^\circ$ is smooth and
strictly positive. This contradiction proves the claim.

Fix such a coordinate $r$. Let $\Pi_S$ denote the orthogonal projection in $L^2(\mu_0)$ onto $S$,
and define
$\varphi_0:=g_r-\Pi_S g_r$.
Then $\varphi_0\neq 0$, $\varphi_0\in S^\perp$, and $\varphi_0\in L^\infty(B)$ because $g_r$ and
$F^\circ$ are bounded on the bounded set $B$. Set
$\varphi:=\varphi_0/(2|\varphi_0|_{L^\infty(B)})$.
Then $|\varphi|_{L^\infty(B)}\le 1/2$, and since $\varphi\in S^\perp$,
\begin{equation}
\label{eq:phi-constraints}
\int_B \varphi(y)\ p_0(\xi^\circ,y)\ dy=0,
\qquad
\int_B F^\circ(y)\ \varphi(y)\ p_0(\xi^\circ,y)\ dy=0.
\end{equation}
Moreover,
\begin{equation}
\label{eq:m-star}
\int_B y_r F^\circ(y)\ \varphi(y)\ p_0(\xi^\circ,y)\ dy
=
\frac{|\varphi_0|_{L^2(\mu_0)}^2}{2|\varphi_0|_{L^\infty(B)}}>0.
\end{equation}
Hence the vector
$m^\circ:=\int_B y\ F^\circ(y)\ \varphi(y)\ p_0(\xi^\circ,y)\ dy$
is nonzero.

\paragraph{2.}
Choose any nonzero $\psi\in C_c^\beta(\mathbb{R}^d)$ supported in
$B(0,1)$ with $\psi(0)=1$, and set
$C_\psi:=|\psi|_{C^\beta(\mathbb{R}^d)}<\infty $.
Since $\xi^\circ\in\operatorname{int}(B)$, there exists $h_0>0$ such that
$\operatorname{supp}(\psi_h)\subset B$ for all $0<h\le h_0$, where
$\psi_h(\xi):=\psi\!\left((\xi-\xi^\circ)/h\right)$.

Set
$h:=M^{-1/(2\beta+d)},
\ \alpha:=a_0 h^\beta$,
with $a_0>0$ to be chosen small later. For all sufficiently large $M$, we have $h\le h_0$. Define
$u_h(\xi,y):=\alpha\,\psi_h(\xi)\varphi(y)$,
and the unnormalized perturbations
$\tilde p_\pm(\xi,y):=p_0(\xi,y)\bigl(1\pm u_h(\xi,y)\bigr)$.
Since $|\varphi|_{L^\infty(B)}\le 1/2$, if $a_0$ is sufficiently small then
$|u_h(\xi,y)|\le \frac14
$ on $B\times B$,
so $\tilde p_\pm\ge \frac34\,p_0\ge 0$.

Let
$m_h:=\iint_{B\times B} p_0(\xi,y)\,u_h(\xi,y)\ d\xi\,dy$.
Because $p_0$ is bounded on $B\times B$, $\varphi$ is bounded, and $\psi_h$ is supported on a set of
volume of order $h^d$, we have
$|m_h|\lesssim \alpha h^d$.
Hence
$c_\pm:=\iint_{B\times B}\tilde p_\pm(\xi,y)\ d\xi\,dy = 1\pm m_h$
satisfy $c_\pm\to 1$. Define the normalized densities
\begin{equation}
p_\pm(\xi,y):=\frac{\tilde p_\pm(\xi,y)}{c_\pm}
=
p_0(\xi,y)\,\frac{1\pm u_h(\xi,y)}{1\pm m_h},
\end{equation}
and let $\nu_\pm$ be the corresponding laws.

\paragraph{3.}
Define
$g_\pm:=p_\pm-p_0$.
Then
$g_\pm(\xi,y) = p_0(\xi,y)\,\frac{\pm(u_h(\xi,y)-m_h)}{1\pm m_h}$,
and therefore
\begin{equation}
|g_\pm|_{L^\infty(B\times B)}
\lesssim |u_h|_{L^\infty(B\times B)}+|m_h|
\lesssim \alpha
\asymp
M^{-\beta/(2\beta+d)}.
\end{equation}

For the $C^\beta$ control in the first coordinate, write
$g_\pm(\cdot,y)
= \frac{p_0(\cdot,y)}{1\pm m_h}
\Bigl(\pm \alpha \varphi(y)\psi_h(\cdot)\mp m_h\Bigr)$.
Using the standard product estimate for H\"older spaces on bounded sets,
$|fg|_{C^\beta(B)}
\le C |f|_{C^\beta(B)} |g|_{C^\beta(B)}$,
and the scaling bound
$|\psi_h|_{C^\beta(B)}\le C C_\psi h^{-\beta}$,
we get, since $\alpha=a_0h^\beta$,
$\alpha |\psi_h|_{C^\beta(B)}
\le C a_0 C_\psi$.
The $m_h$ term contributes only $O(\alpha h^d)$ in $C^\beta(B)$.
Consequently,
$\sup_{y\in B}|g_\pm(\cdot,y)|_{C^\beta(B)}
\le C a_0 C_\psi$.
Choose $Q_0>0$ larger than this constant.

\paragraph{4.}
Because
$|f_\pm(\xi)-f_0(\xi)|
\le \int_B |g_\pm(\xi,y)|\,dy
\le \lambda(B)|g_\pm|_{L^\infty(B\times B)}
\lesssim \alpha$,
and the assumption of the pivot law
$\inf_{\xi\in B} f_0(\xi)\ge f_{\min}+2\delta_f$,
for all sufficiently large $M$ we obtain
\begin{equation}
\inf_{\xi\in B} f_\pm(\xi)\ge f_{\min}.
\end{equation}

Similarly, define
\begin{equation}
g_{1,\pm}(t,x;\xi):=\int_B F(t,\xi,x,y)\ p_\pm(\xi,y)\ dy,
\qquad
g_{1,0}(t,x;\xi):=\int_B F(t,\xi,x,y)\ p_0(\xi,y)\ dy.
\end{equation}
Because $F$ is continuous on the compact set
$[s,u-\eta]\times B_R\times B\times B$, there exists
\begin{equation}
C_{F,Q}:=\sup_{t\in[s,u-\eta],\,x\in B_R,\,\xi\in B,\,y\in B} F(t,\xi,x,y)<\infty.
\end{equation}
Hence
$|g_{1,\pm}(t,x;\xi)-g_{1,0}(t,x;\xi)|
\le C_{F,Q}\lambda(B)|g_\pm|_{L^\infty(B\times B)}
\lesssim \alpha$
uniformly on $ Q_{R,\eta,\rho}$.
Since $f_0$ and $f_\pm$ are bounded below away from zero for all sufficiently large $M$, the
deterministic ratio inequality gives
$\sup_{(t,x,\xi)\in Q_{R,\eta,\rho}}
|D_\pm^*(t,x;\xi)-D_0^*(t,x;\xi)|
\lesssim
\alpha$.
As
$\inf_{(t,x,\xi)\in Q_{R,\eta,\rho}} D_0^*(t,x;\xi)\ge D_{\min}+2\delta_D$,
for all sufficiently large $M$ we obtain
\begin{equation}
\inf_{(t,x,\xi)\in Q_{R,\eta,\rho}} D_\pm^*(t,x;\xi)\ge D_{\min}.
\end{equation}

Finally, since
$\sup_{y\in B}|p_0(\cdot,y)|_{C^\beta(B)}\le L_p-2\delta_H
\ \text{and}\ 
\sup_{y\in B}|g_\pm(\cdot,y)|_{C^\beta(B)}
\le C a_0 C_\psi$,
choosing $a_0$ sufficiently small yields
$\sup_{y\in B}|p_\pm(\cdot,y)|_{C^\beta(B)}
\le L_p-\delta_H<L_p $.
Thus $\nu_\pm\in \mathfrak P(\beta,L_p,f_{\min},B)$ for all sufficiently large $M$.

Consequently, for $c_{\mathrm{loc}}$ sufficiently large and all sufficiently large $M$,
\begin{equation}
\label{eq:two-point-local}
\{\nu_+,\nu_-\}\subset \mathfrak P_{\mathrm{loc},M}^{\cap}\!\bigl(\nu_0;Q_0,c_{\mathrm{loc}}M^{-\beta/(2\beta+d)}\bigr).
\end{equation}

\paragraph{5.}
At $(t^\circ,x^\circ,\xi^\circ)$, write
$g_{1,\pm}^\circ:=\int_B F^\circ(y)\ p_\pm(\xi^\circ,y)\ dy,
\ g_{2,\pm}^\circ:=\int_B y\ F^\circ(y)\ p_\pm(\xi^\circ,y)\ dy$.
Since $\psi_h(\xi^\circ)=1$, using \eqref{eq:phi-constraints} and \eqref{eq:m-star} we obtain
\begin{equation}
g_{1,\pm}^\circ
=
\frac{1}{1\pm m_h}\int_B F^\circ(y)\ p_0(\xi^\circ,y)\bigl(1\pm \alpha\varphi(y)\bigr)\ dy
=
\frac{g_{1,0}^\circ}{1\pm m_h},
\end{equation}
and
\begin{equation}
g_{2,\pm}^\circ
=
\frac{1}{1\pm m_h}\int_B y\ F^\circ(y)\ p_0(\xi^\circ,y)\bigl(1\pm \alpha\varphi(y)\bigr)\ dy
=
\frac{g_{2,0}^\circ\pm \alpha m^\circ}{1\pm m_h},
\end{equation}
where
$g_{1,0}^\circ:=\int_B F^\circ(y)\ p_0(\xi^\circ,y)\ dy,
\ g_{2,0}^\circ:=\int_B y\ F^\circ(y)\ p_0(\xi^\circ,y)\ dy$.
Therefore the common normalization cancels:
\begin{equation}
Q_\pm^\circ
:=
\frac{g_{2,\pm}^\circ}{g_{1,\pm}^\circ}
=
\frac{g_{2,0}^\circ\pm \alpha m^\circ}{g_{1,0}^\circ}.
\quad \text{Hence} \quad
Q_+^\circ-Q_-^\circ
= \frac{2\alpha\ m^\circ}{g_{1,0}^\circ},
\end{equation}
and therefore
\begin{equation}
\label{eq:drift-sep}
\big|a_+^*(t^\circ,x^\circ;\xi^\circ)-a_-^*(t^\circ,x^\circ;\xi^\circ)\big|
=
\frac{1}{\Delta(t^\circ)}\ |Q_+^\circ-Q_-^\circ|
\ge c_1\ \alpha
=
c_1 a_0 h^\beta,
\end{equation}
where $c_1>0$ depends only on the fixed model parameters, since $\Delta(t^\circ)\ge \eta$ and
\begin{equation}
g_{1,0}^\circ=f_0(\xi^\circ)D_0^*(t^\circ,x^\circ;\xi^\circ)>0.
\end{equation}

\paragraph{6.}
Using the bound
$\mathrm{KL}(\nu_+\mid\nu_-)\le \chi^2(\nu_+,\nu_-)
:= \iint_{B\times B}\frac{(p_+-p_-)^2}{p_-}$,
we estimate the one-sample divergence. Since
\begin{equation}
p_+-p_-
=
p_0\left(\frac{1+u_h}{1+m_h}-\frac{1-u_h}{1-m_h}\right)
=
\frac{2p_0(u_h-m_h)}{1-m_h^2},
\end{equation}
and since $|u_h|\le 1/4$ and $|m_h|\lesssim \alpha h^d$ for all sufficiently large $M$, we have
\begin{equation}
p_-(\xi,y)
=
p_0(\xi,y)\frac{1-u_h(\xi,y)}{1-m_h}
\ge c\,p_0(\xi,y)
\end{equation}
for some absolute constant $c>0$. Therefore
$\chi^2(\nu_+,\nu_-)
\le C\iint_{B\times B} p_0(\xi,y)\,(u_h(\xi,y)-m_h)^2\ d\xi dy$.
Since $m_h=\iint p_0u_h$ and $\iint p_0=1$, Jensen's inequality gives
$m_h^2\le \iint p_0u_h^2$,
and hence
\begin{equation}
\iint p_0(u_h-m_h)^2
\le
2\iint p_0u_h^2 + 2m_h^2
\lesssim
\iint p_0u_h^2.
\end{equation}
Now $p_0$ is bounded on $B\times B$, $\varphi$ is bounded, and $\psi_h$ is supported on a set of
$\xi$-volume of order $h^d$, so
\begin{equation}
\iint_{B\times B} p_0(\xi,y)u_h(\xi,y)^2\ d\xi dy
\lesssim
\alpha^2 h^d.
\quad\text{Thus}\quad
\mathrm{KL}(\nu_+\mid\nu_-)\lesssim \alpha^2 h^d.
\end{equation}
Hence for $M$ i.i.d.\ observations,
\begin{equation}
\label{eq:KL-total}
\mathrm{KL}(\nu_+^{\otimes M}|\nu_-^{\otimes M})
\lesssim M\ \alpha^2 h^d
= M\ a_0^2\ h^{d+2\beta}.
\end{equation}

With $h=M^{-1/(2\beta+d)}$, the right-hand side of \eqref{eq:KL-total} is bounded by a universal
constant if $a_0$ is chosen small enough. By Le Cam’s two-point lemma,
\begin{equation}
\inf_{\tilde a}\ \sup_{\nu\in\{\nu_+,\nu_-\}}
\E_\nu\!\left[
\big|\tilde a(t^\circ,x^\circ;\xi^\circ)-a_\nu^*(t^\circ,x^\circ;\xi^\circ)\big|
\right]
\ge
c_2\ \big|a_+^*(t^\circ,x^\circ;\xi^\circ)-a_-^*(t^\circ,x^\circ;\xi^\circ)\big|.
\end{equation}
Since the risk $\mathcal R_\nu$ dominates the pointwise risk at $(t^\circ,x^\circ,\xi^\circ)$,
\begin{equation}
\mathcal R_\nu(\tilde a)
\ge
\E_\nu\!\left[
\big|\tilde a(t^\circ,x^\circ;\xi^\circ)-a_\nu^*(t^\circ,x^\circ;\xi^\circ)\big|
\right],
\end{equation}
and since \eqref{eq:two-point-local} holds, we conclude from \eqref{eq:drift-sep} that
\begin{equation}
\inf_{\tilde a}\ \sup_{\nu\in\mathfrak P_{\mathrm{loc},M}^{\cap}\!\bigl(\nu_0;Q_0,c_{\mathrm{loc}}M^{-\beta/(2\beta+d)}\bigr)}\ \mathcal R_\nu(\tilde a)
\ge
\inf_{\tilde a}\ \sup_{\nu\in\{\nu_+,\nu_-\}}\ \mathcal R_\nu(\tilde a)
\gtrsim
h^\beta
\asymp
M^{-\beta/(2\beta+d)},
\end{equation}
which proves the claim.
\end{proof}

\Cref{prop:lower} is local around a pivot law satisfying the stated slack conditions. The next corollary shows that, under explicit compatibility conditions, a uniform pivot law satisfies those conditions and therefore upgrades the local lower bound to a global minimax lower bound on $\mathfrak P(\beta,L_p,f_{\min},B)$.

Write $|B|:=\lambda(B)$. The compatibility conditions $f_{\min}<|B|^{-1}$ and $L_p>|B|^{-2}$ are strict feasibility
conditions ensuring that the uniform law lies in the interior of the marginal-density and smoothness
constraints of the global class. In particular, $f_{\min}<|B|^{-1}$ is necessary for the existence of
any density on $B$ bounded below by $f_{\min}$, since
$1=\int_B f\ge f_{\min}|B|$. The denominator slack is automatic from the deterministic lower bound
following \Cref{ass:pair}.

\begin{corollary}[Global minimax lower bound via an explicit uniform pivot]
\label{cor:global-minimax-explicit-pivot}
Let $R_B:=\sup_{z\in B}|z|<\infty$ and let
$\mathcal P:=\mathfrak P(\beta,L_p,f_{\min},B)$ be the global model class from
\Cref{ass:pair}. Assume further that $f_{\min}<|B|^{-1}$ and $L_p>|B|^{-2}$.
Then there exists a constant $c>0$, depending only on the fixed model parameters,
such that for all sufficiently large $M$,
\begin{equation}
\inf_{\widetilde a}\sup_{\nu\in\mathcal P} \mathcal R_\nu(\widetilde a)
\ge
c\,M^{-\beta/(2\beta+d)}.
\end{equation}
Consequently, together with \Cref{thm:oracle},
\begin{equation}
\sup_{\nu\in\mathcal P} \mathcal R_\nu(\hat a_{\widehat h})
\lesssim
\left(\frac{\log M}{M}\right)^{\beta/(2\beta+d)},
\end{equation}
so the adaptive estimator is globally minimax-rate optimal over $\mathcal P$ up to logarithmic factors.
\end{corollary}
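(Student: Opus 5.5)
The plan is to instantiate \Cref{prop:lower} at the uniform pivot $\nu_0$ with density $p_0(\xi,y):=|B|^{-2}\mathbf 1_{B\times B}(\xi,y)$ and an arbitrary fixed interior evaluation point $(t^\circ,x^\circ,\xi^\circ)\in Q_{R,\eta,\rho}$. Since $\mathfrak P^{\cap}_{\mathrm{loc},M}(\nu_0;Q_0,c_{\mathrm{loc}}M^{-\beta/(2\beta+d)})\subset\mathcal P$ by definition, the supremum over $\mathcal P$ dominates the local supremum, and the global lower bound follows at once from the local one. The upper bound then comes from \Cref{cor:adaptive} (equivalently \Cref{cor:adaptive-unequal} on the diagonal). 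So the only real work is to check that the uniform law belongs to $\mathcal P$ and meets the three slack conditions of \Cref{prop:lower}.

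\textbf{Membership and the two explicit slacks.} First, the marginal is $f_0(\xi)=\int_B|B|^{-2}\,dy=|B|^{-1}$ on $B$. Hence $f_0-f_{\min}=|B|^{-1}-f_{\min}>0$, and we set $\delta_f:=(|B|^{-1}-f_{\min})/2$. Second, for the smoothness slack we take the extension $\bar p_0(\cdot,y)\equiv|B|^{-2}$, a constant on a neighborhood $U$ of $B$. All derivatives and Hölder seminorms of a constant vanish, so its norm in the Hölder ball of Appendix~\ref{Appendix A} equals $|B|^{-2}$. This gives $\delta_H:=(L_p-|B|^{-2})/2>0$ under the assumption $L_p>|B|^{-2}$. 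Both facts also show $\nu_0\in\mathcal P$.

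\textbf{Denominator slack.} Here $D_0^*(t,x;\xi)=|B|^{-1}\int_B F(t,\xi,x,y)\,dy$. On $Q_{R,\eta,\rho}$ the quantity $|y-x|\le R+R_B$ and $\Delta(t)\ge\eta$, so $F\ge e^{-(R+R_B)^2/(2\eta)}$. Therefore $D_0^*\ge e^{-(R+R_B)^2/(2\eta)}$ uniformly.

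This step is the main obstacle. The constant $D_{\min}$ is only defined implicitly, as a deterministic floor valid for every law in the class, via $\inf D^*\ge 2D_{\min}$. We therefore use that definition directly: the slack requirement $D_0^*\ge D_{\min}+2\delta_D$ holds with $\delta_D:=D_{\min}/2$, because $D_0^*\ge 2D_{\min}$.

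If one prefers an explicit choice, take $D_{\min}:=\tfrac12 e^{-(R+R_B)^2/(2\eta)}$. This is a valid floor for every law, since $F\ge e^{-(R+R_B)^2/(2\eta)}$ pointwise regardless of $p$ (because $G\ge1$). The slack is then again $D_{\min}/2$. Either way, this is the "automatic" denominator slack mentioned before the statement.

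\textbf{Conclusion.} With $\delta_f,\delta_D,\delta_H>0$ verified, \Cref{prop:lower} supplies $Q_0,c_{\mathrm{loc}},c$, depending only on the model constants since the pivot is explicit. For large $M$ it gives $\inf_{\tilde a}\sup_{\mathfrak P^{\cap}_{\mathrm{loc},M}}\mathcal R_\nu\ge cM^{-\beta/(2\beta+d)}$. Enlarging the supremum to $\mathcal P$ gives the displayed bound. Combining it with the adaptive upper bound $(\log M/M)^{\beta/(2\beta+d)}$ shows the two rates differ only by the logarithmic factor.
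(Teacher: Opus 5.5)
Your proposal is correct and follows the paper's proof essentially step by step. You use the same uniform pivot $p_0=|B|^{-2}\mathbf 1_{B\times B}$, the same marginal, H\"older, and denominator slack checks (the last via $D_0^*\ge 2D_{\min}$ plus $F\ge e^{-(R+R_B)^2/(2\eta)}$), and the same inclusion $\mathfrak P^{\cap}_{\mathrm{loc},M}\subset\mathcal P$. Your factor $1/2$ in $\delta_f,\delta_H$, instead of the paper's $1/4$, makes those conditions hold with equality; this is harmless because they are non-strict inequalities.
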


\begin{proof}
Fix any $(t^\circ,x^\circ,\xi^\circ)\in Q_{R,\eta,\rho}$. Define the uniform pivot law $\nu_0$ on
$B\times B$ by the joint density
\begin{equation}
p_0(\xi,y)=\frac{1}{|B|^2}\mathbf 1_B(\xi)\mathbf 1_B(y).
\end{equation}
Then its $X_s$-marginal density is
$f_0(\xi)=\int_B p_0(\xi,y)\,dy=\frac1{|B|},
\  \xi\in B$.
Since $f_{\min}<|B|^{-1}$, we may choose
$\delta_f:=\frac14\bigl(|B|^{-1}-f_{\min}\bigr)>0$,
so that
$\inf_{\xi\in B} f_0(\xi)=|B|^{-1}\ge f_{\min}+2\delta_f$.

Next, for each $y\in B$, the map $\xi\mapsto p_0(\xi,y)$ is constant on $B$. A valid extension to a neighborhood of $B$ is
$\bar p_0(z,y)=\frac1{|B|^2}$.
With the $C^\beta(B)$ norm of \Cref{prop:lower},
$\sup_{y\in B} |\bar p_0(\cdot,y)|_{C^\beta(B)} = |B|^{-2}$.

Since $L_p>|B|^{-2}$, we choose
$\delta_H:=\frac14\bigl(L_p-|B|^{-2}\bigr)>0$, then
$\sup_{y\in B} |\bar p_0(\cdot,y)|_{C^\beta(B)}
\le L_p-2\delta_H$.

Under the uniform pivot,
\begin{equation}
D_0^*(t,x;\xi)
=
\mathbb E_{\nu_0}\!\left[F(t,\xi,x,X_u)\mid X_s=\xi\right]
=
\frac1{|B|}\int_B F(t,\xi,x,y)\,dy.
\end{equation}
Define $D_{\mathrm{unif}}:=\inf_{(t,x,\xi)\in Q_{R,\eta,\rho}}D_0^*(t,x;\xi)$. Moreover, by the automatic denominator bound following \Cref{ass:pair},
$D_{\mathrm{unif}}\ge 2D_{\min}$, so the uniform pivot satisfies the required denominator slack.

It remains to note that $D_{\mathrm{unif}}>0$. Indeed, on $Q_{R,\eta,\rho}$ we have $\Delta(t)\ge \eta$, and for $x\in B_R$, $y\in B$,
$|y-x|\le |y|+|x|\le R_B+R$.
Since the expanding term in the exponent is nonnegative,
\begin{equation}
F(t,\xi,x,y)
=
\exp\!\left(
-\frac{|y-x|^2}{2\Delta(t)}
+
\frac{|y-\xi|^2}{2\Delta}
\right)
\ge
\exp\!\left(
-\frac{(R_B+R)^2}{2\eta}
\right).
\end{equation}
Hence
\begin{equation}
D_{\mathrm{unif}}
\ge \frac1{|B|}\int_B
\exp\!\left( -\frac{(R_B+R)^2}{2\eta} \right)\,dy
= \exp\!\left( -\frac{(R_B+R)^2}{2\eta}
\right) >0.
\end{equation}

Thus the explicit uniform pivot $\nu_0$ satisfies the interiority/slack conditions of \Cref{prop:lower}. Therefore there exist constants $Q_0>0$, $c_{\mathrm{loc}}>0$, and $c>0$ such that, for all sufficiently large $M$,
\begin{equation}
\inf_{\widetilde a} \sup_{\nu\in \mathfrak P_{\mathrm{loc},M}^{\cap}
(\nu_0;Q_0,c_{\mathrm{loc}}M^{-\beta/(2\beta+d)})} \mathcal R_\nu(\widetilde a)
\ge c\,M^{-\beta/(2\beta+d)}.
\end{equation}
By construction,
\begin{equation}
\mathfrak P_{\mathrm{loc},M}^{\cap}
(\nu_0;Q_0,c_{\mathrm{loc}}M^{-\beta/(2\beta+d)})
\subset \mathcal P.
\end{equation}
Hence
\begin{equation}
\inf_{\widetilde a}\sup_{\nu\in\mathcal P} \mathcal R_\nu(\widetilde a)
\ge
\inf_{\widetilde a}
\sup_{\nu\in
\mathfrak P_{\mathrm{loc},M}^{\cap}
(\nu_0;Q_0,c_{\mathrm{loc}}M^{-\beta/(2\beta+d)})}
\mathcal R_\nu(\widetilde a)
\ge
c\,M^{-\beta/(2\beta+d)}.
\end{equation}
This proves the global minimax lower bound.
\end{proof}

\section{Experimental Details}
\label{app:exp-details}

This Appendix records the synthetic models, truth computation, bandwidth rules, diagnostics, and reproducibility details used in \Cref{sec:experiments}. The experiments are theorem-facing: they probe the finite-sample scaling law, the pointwise Gaussian approximation, the terminal singularity, and the finite-sample oracle-inequality behavior of a practical selector. For adaptivity we follow the GL-oracle-inequality viewpoint, so the relevant empirical quantity is a non-unit oracle ratio rather than a target close to one \cite{goldenshluger2011,lacour2016minimal}.

\paragraph{Benchmark scope.}
We do not re-benchmark SBTS as a time-series generator. The original SBTS paper of \cite{hamdouche2023sbts} and the
comprehensive evaluation of \cite{alouadi2025robust} already study generator-level performance against
standard baselines and datasets. Our experiments instead use synthetic laws with cached
ground-truth drifts in order to isolate statistical drift-estimation error from optimization,
simulation, and potential-to-drift recovery errors.

\subsection{Synthetic bridge families}
\label{app:exp-models}

All experiments are run on the single interval $[s,u]=[0.2,1.0]$, so $\Delta=u-s=0.8$, with support box $B=[-3,3]^d$. We use $\mathcal X_{\rm eval}=[-2,2]$ for $d=1$ and $\mathcal X_{\rm eval}=[-1.5,1.5]^2$ for $d=2$; the latter was frozen after a pre-flight denominator-stability check.

We consider two bounded-support synthetic pair laws. In the \emph{Gaussian-to-Gaussian} family, $X_s\sim \TruncNormal(m_s,\Sigma_s;B)$ and $X_u\mid X_s=\xi\sim \TruncNormal(A\xi+b,\Sigma_\varepsilon;B)$. In the \emph{Mixture-to-Mixture} family, $X_s\sim \frac12\TruncNormal(m_1,S;B)+\frac12\TruncNormal(m_2,S;B)$ and
\begin{equation}
X_u\mid X_s=\xi\sim \pi(\xi)\TruncNormal(A_1\xi+b_1,\Sigma_1;B)+(1-\pi(\xi))\TruncNormal(A_2\xi+b_2,\Sigma_2;B),
\end{equation}
with logistic gate $\pi(\xi)=\sigma(\alpha_0+\alpha^\top \xi)$.

The default parameters are as follows. For GG1 we use $m_s=0$, $\Sigma_s=(1.0)^2$, $A=0.7$, $b=0.3$, and $\Sigma_\varepsilon=(0.35)^2$. For GG2 we use $m_s=(0,0)$, $\Sigma_s=\begin{psmallmatrix}1.0&0\\0&0.8\end{psmallmatrix}$, $A=\begin{psmallmatrix}0.75&0.15\\-0.10&0.65\end{psmallmatrix}$, $b=\begin{psmallmatrix}0.25\\-0.20\end{psmallmatrix}$, and $\Sigma_\varepsilon=\begin{psmallmatrix}0.14&0.03\\0.03&0.12\end{psmallmatrix}$. For MM1 we use $m_1=-1.2$, $m_2=1.2$, $S=(0.45)^2$, $A_1=0.8$, $b_1=0.4$, $\Sigma_1=(0.25)^2$, $A_2=-0.5$, $b_2=-0.3$, $\Sigma_2=(0.30)^2$, and $\pi(\xi)=\sigma(1.5\xi)$. For MM2 we use $m_1=(-0.9,0.9)$, $m_2=(0.9,-0.9)$, $S=\begin{psmallmatrix}0.16&0\\0&0.16\end{psmallmatrix}$, $A_1=\begin{psmallmatrix}0.8&0.1\\0&0.7\end{psmallmatrix}$, $b_1=\begin{psmallmatrix}0.3\\-0.2\end{psmallmatrix}$, $\Sigma_1=\begin{psmallmatrix}0.0484&0\\0&0.0324\end{psmallmatrix}$, $A_2=\begin{psmallmatrix}-0.4&0.2\\0.15&-0.6\end{psmallmatrix}$, $b_2=\begin{psmallmatrix}-0.35\\0.25\end{psmallmatrix}$, $\Sigma_2=\begin{psmallmatrix}0.08&0.02\\0.02&0.07\end{psmallmatrix}$, and $\pi(\xi)=\sigma(1.2\xi_1-1.0\xi_2)$.

For the rate and adaptivity experiments we use the fixed interior queries GG1: $(t_0,\xi_0)=(0.6,0)$, GG2: $(0.6,(0,0))$, MM1: $(0.6,0.8)$, and MM2: $(0.6,(0.8,-0.8))$. For the one-dimensional CLT experiment we use GG1: $(t_0,x_0,\xi_0)=(0.6,0.2,0)$ and MM1: $(0.6,0.3,0.8)$. The final four-point rate/adaptivity summaries use $R_M=50$ for GG1 and MM1 and $R_M=20$ for GG2 and MM2. The denser adaptivity-figure reruns use the same repetition counts on the seven-point grid $M\in\{10^3,1500,2\times10^3,3000,4\times10^3,6000,8\times10^3\}$. The final one-dimensional CLT runs use $R_M=300$, while the undersmoothing screen uses $R_M=150$ at $M\in\{4\times10^3,8\times10^3\}$.

\subsection{Truth computation and pre-flight checks}
\label{app:exp-truth}

The target drift is computed numerically, not by Monte Carlo. For fixed $(t,x,\xi)$ we evaluate
\begin{equation}
D^\ast(t,x;\xi)=\int_B F(t,\xi,x,y)\ \frac{p(\xi,y)}{f(\xi)}\ dy,\qquad
N^\ast(t,x;\xi)=\int_B y\ F(t,\xi,x,y)\ \frac{p(\xi,y)}{f(\xi)}\ dy,
\end{equation}
and then
\begin{equation}
a^\ast(t,x;\xi)=\Delta(t)^{-1}\left(\frac{N^\ast(t,x;\xi)}{D^\ast(t,x;\xi)}-x\right).
\end{equation}
In $d=1$ we use adaptive Gauss--Legendre quadrature; in $d=2$ we use a dense tensor-product grid. The truth cache is accepted only after a one-step refinement check showing negligible change at the recorded query points.

Before the main runs we verify constrained sampling, positivity of $f(\xi_0)$ at the chosen interior points, numerical positivity of $\inf_{(t,x,\xi)\in\mathcal Q_{\mathrm{eval}}}D^\ast(t,x;\xi)$ on the actual evaluation domain, and negligible truth-grid refinement error. The pre-flight report is used only to freeze the configurations and evaluation domains.

\begin{table}[t]
    \centering
    \caption{Pre-flight numerical diagnostics: source density at the chosen conditioning point, minimum source density on the evaluation conditioning grid, minimum population denominator on the evaluation domain, and truth-grid convergence error.}
    \label{tab:preflight}
    \begin{tabular}{lcccc}
        \toprule
        Testbed & $f(\xi_0)$ & $\min_{\xi\in \Xi_{\mathrm{grid}}} f(\xi)$ & $\min D^\ast$ & truth error \\
        \midrule
        GG1 & $4.000223\times 10^{-1}$ & $5.413713\times 10^{-2}$ & $6.140077\times 10^{-3}$ & $0$ \\
        GG2 & $1.785645\times 10^{-1}$ & $1.420651\times 10^{-2}$ & $2.285052\times 10^{-3}$ & $1.110223\times 10^{-16}$ \\
        MM1 & $2.986354\times 10^{-1}$ & $2.532522\times 10^{-2}$ & $2.997735\times 10^{-1}$ & $0$ \\
        MM2 & $4.672258\times 10^{-1}$ & $4.918337\times 10^{-9}$ & $8.261432\times 10^{-3}$ & $2.220446\times 10^{-16}$ \\
        \bottomrule
    \end{tabular}
\end{table}

\subsection{Grids, losses, and finite-range slope benchmark}
\label{app:exp-metrics}

For $d=1$ the evaluation grid uses $|\mathcal G_x|=200$ points on $[-2,2]$; for $d=2$ the final runs use a $21\times21$ grid on $[-1.5,1.5]^2$. The primary loss is
\begin{equation}
E_{\infty,\mathcal G}(t_0,\xi_0;h):=\max_{x\in\mathcal G_x}|\hat a_h(t_0,x;\xi_0)-a^\ast(t_0,x;\xi_0)|,
\end{equation}
with repetition average
\begin{equation}
\overline E_{\infty,\mathcal G}(M;h):=R_M^{-1}\sum_{r=1}^{R_M}E_{\infty,\mathcal G}^{(r)}(t_0,\xi_0;h).
\end{equation}
As a secondary descriptive metric we also record
\begin{equation}
\ISE(t_0,\xi_0;h):=\left(\int_{\mathcal X_{\rm eval}}|\hat a_h(t_0,x;\xi_0)-a^\ast(t_0,x;\xi_0)|^2\ dx\right)^{1/2}.
\end{equation}

The finite-sample theorem predicts $(\log M/M)^{\beta/(2\beta+d)}$. With the product Epanechnikov kernel we use the effective benchmark $\beta=2$, so $p(d)=2/(4+d)$ and the finite-range theoretical secant slope on $[M_1,M_2]$ is
\begin{equation}
s_{\mathrm{theory,sec}}(M_1,M_2;d)
=
-p(d)+p(d)\frac{\log(\log M_2/\log M_1)}{\log(M_2/M_1)}.
\end{equation}

For adaptivity we define
\begin{equation}
\Gamma_{\nu,M}^{(r)}:=\frac{E_{\infty,\mathcal G}^{(r)}(t_0,\xi_0;\hat h)}{E_{\infty,\mathcal G}^{(r)}(t_0,\xi_0;h_{\rm or})}\ge1,
\end{equation}
where $h_{\rm or}\in\arg\min_{h\in\mathcal H_M}E_{\infty,\mathcal G}(t_0,\xi_0;h)$. We then report
\begin{equation}
\bar\Gamma_\nu(M):=R_M^{-1}\sum_{r=1}^{R_M}\Gamma_{\nu,M}^{(r)},\qquad
\widehat C_\nu^{\max}:=\max_{M\in\mathcal M}\bar\Gamma_\nu(M),\qquad
\widehat C_\nu^{\avg}:=|\mathcal M|^{-1}\sum_{M\in\mathcal M}\bar\Gamma_\nu(M),
\end{equation}
with $\mathcal M=\{10^3,2\times10^3,4\times10^3,8\times10^3\}$. This is the empirical oracle-inequality diagnostic. The benchmark is not $\Gamma\approx1$: GL-type selectors naturally come with non-unit oracle constants, and in the classical kernel setting Goldenshluger--Lepski obtain the factor $1+3|K|_1=4$ for a nonnegative unit-mass kernel \cite{goldenshluger2011}$;$ minimal-penalty results further show that under-penalization may trigger instability \cite{lacour2016minimal}.

\subsection{Bandwidth grid, oracle rule, and practical selector}
\label{app:exp-bandwidths}

All rate and adaptive experiments are run on a geometric grid $\mathcal H_M=\{h_0q^k:k=0,\dots,K_M\}$ with $q=2^{-1/2}$ and $h_0=1.2$. The oracle bandwidth is
\begin{equation}
h_{\rm or}\in\arg\min_{h\in\mathcal H_M}E_{\infty,\mathcal G}(t_0,\xi_0;h),
\end{equation}
with ties broken in favor of the larger bandwidth.

The practical selector used for all quantitative claims in the paper is the raw-max, one-sided GL surrogate
\begin{equation}
|\hat a_{h'}-\hat a_h|_{\infty,\mathcal G}:=\max_{x\in\mathcal G_x}|\hat a_{h'}(t_0,x;\xi_0)-\hat a_h(t_0,x;\xi_0)|,
\end{equation}
\begin{equation}
B(h):=\sup_{h'\le h}\left\{|\hat a_{h'}-\hat a_h|_{\infty,\mathcal G}-\kappa_{\mathrm{pair}}\sqrt{\frac{\log M}{M(h')^d}}\right\}_+,\qquad
\hat h:=\arg\min_{h\in\mathcal H_M}\left\{B(h)+\kappa_{\mathrm{final}}\sqrt{\frac{\log M}{Mh^d}}\right\},
\end{equation}
with $\kappa_{\mathrm{pair}}=\kappa_{\mathrm{final}}=2$.

\paragraph{Why this selector.}
Write
\begin{equation}
\hat a_h:=\hat a_{h,h},
\qquad
v_0(h):=\sqrt{\frac{\log M}{Mh^d}},
\qquad
b(h):=Ah^\beta,
\end{equation}
and
$E(h):=|\hat a_h-a^*|_{\infty,\mathcal G}$.
The implemented diagonal selector uses
\begin{equation}
B(h)
=
\sup_{h'\le h}
\left\{
|\hat a_{h'}-\hat a_h|_{\infty,\mathcal G}
-
2v_0(h')
\right\}_+ .
\end{equation}
On the good event, suppose
$E(h)\le b(h)+v_0(h)$.
For $h'\le h$,
\begin{equation}
v_0(h')\ge v_0(h),
\qquad
b(h')\le b(h),
\end{equation}
and therefore
$|\hat a_{h'}-\hat a_h|_{\infty,\mathcal G}
\le
E(h')+E(h)
\le
b(h')+b(h)+v_0(h')+v_0(h)$.

Since $2v_0(h')\ge v_0(h')+v_0(h)$,
\begin{equation}
|\hat a_{h'}-\hat a_h|_{\infty,\mathcal G}
-
2v_0(h')
\le
b(h')+b(h)
\le
2b(h).
\end{equation}
Hence
$B(h)\le 2b(h)$.
By the minimizing property of $\hat h$, for every $u\in\mathcal H_M$,
\begin{equation}
B(\hat h)+2v_0(\hat h)
\le
B(u)+2v_0(u).
\end{equation}
Using the standard one-sided GL comparison,
\begin{equation}
E(\hat h)
\le
2\{B(u)+2v_0(u)\}+E(u).
\end{equation}
Consequently,
\begin{equation}
E(\hat h)
\le
2\{2b(u)+2v_0(u)\}+b(u)+v_0(u)
=
5\{b(u)+v_0(u)\}.
\end{equation}
Taking the infimum over $u\in\mathcal H_M$, the practical selector tracks
the diagonal oracle objective up to the multiplicative constant $5$.
The appendix oracle inequality uses the full proof-side penalty with
constants, grid-union logarithms, and lower-order linear terms; the
experiments use this leading-order diagonal proxy.

\paragraph{Why the lower bandwidth floor.}
The crucial experimental issue is not the selector form itself but the small-$h$ tail of the admissible grid. The stochastic scale entering both the selector penalty and the local denominator estimation is of order $(M h^d)^{-1/2}$ up to logarithms. If $h$ is too small, then $M h^d$ is too small, so pairwise discrepancies become excessively noisy and the ratio estimator becomes more fragile because its local denominator is supported by too little effective mass. To prevent the selector from entering this regime, we truncate the grid below at
\begin{equation}
h_{\min}(M,d)=c_{\min}M^{-1/d},
\end{equation}
with
\begin{equation}
c_{\min}=81\quad\text{in }d=1,\qquad c_{\min}=9\quad\text{in }d=2.
\end{equation}
These choices are dimensionally matched because in both cases
\begin{equation}
M h_{\min}(M,d)^d=c_{\min}^d=81.
\end{equation}
Hence the entire adaptive search is restricted to
\begin{equation}
M h^d\ge81.
\end{equation}
This gives a uniform finite-sample lower bound on the effective local sample size across dimensions. In exploratory runs, allowing smaller bandwidths was the dominant source of instability in the hard MM cases; after imposing $M h^d\ge81$, the selector became stable on all four testbeds.

For diagnostics we track the average selected oracle and adaptive bandwidths, the empirical oracle-ratio summaries, and the frequency with which the selected bandwidth lies on the search boundary. In the table below the selected-bandwidth averages are first taken over repetitions at each $M$ and then averaged over $M\in\mathcal M$.

\begin{table}[t]
    \centering
    \caption{Bandwidth and oracle-ratio diagnostics for the final adaptive protocol. Here $\mathbb E_M[\mathbb E(h_{\rm or}\mid M)]$ and $\mathbb E_M[\mathbb E(\hat h\mid M)]$ are the $M$-averaged oracle and adaptive selected bandwidths, \texttt{BoundaryAvg} is the mean boundary-hit rate, and $\widehat C_\nu^{\avg},\widehat C_\nu^{\max}$ are the empirical oracle-ratio summaries.}
    \label{tab:bandwidth-diagnostics}
    \begin{tabular}{lccccc}
        \toprule
        Testbed & $\mathbb E_M[\mathbb E(h_{\rm or}\mid M)]$ & $\mathbb E_M[\mathbb E(\hat h\mid M)]$ & BoundaryAvg & $\widehat C_\nu^{\avg}$ & $\widehat C_\nu^{\max}$ \\
        \midrule
        GG1 & $0.242760$ & $0.381922$ & $0.0000$ & $1.773613$ & $2.109796$ \\
        GG2 & $0.376363$ & $0.383079$ & $0.0000$ & $1.314465$ & $1.431312$ \\
        MM1 & $0.238186$ & $0.335428$ & $0.0100$ & $1.960175$ & $2.175424$ \\
        MM2 & $0.398902$ & $0.435579$ & $0.0125$ & $1.485740$ & $1.686308$ \\
        \bottomrule
    \end{tabular}
\end{table}

\subsection{Rate and adaptivity protocol}
\label{app:exp-rate}

For each testbed and each $M\in\mathcal M$, we simulate $R_M$ independent datasets and compute $E_{\infty,\mathcal G}^{(r)}(t_0,\xi_0;h)$ for every $h\in\mathcal H_M$. The oracle-selected bandwidth minimizes this quantity; the adaptive bandwidth is obtained from the canonical selector above. The fitted rate slope is then obtained by ordinary least squares on $\log \overline E_{\infty,\mathcal G}(M)$ versus $\log M$. We also report the slope of the secondary ISE curve as a descriptive diagnostic.

The main adaptivity summaries are $\bar\Gamma_\nu(M)$, $\widehat C_\nu^{\max}$, and $\widehat C_\nu^{\avg}$. Since the theorem is an oracle inequality rather than an oracle-equality statement, these are the right practical diagnostics. The protocol has two distinct theorem-facing ingredients: the selector form is chosen because it is the closest direct implementation of the proof-side GL construction, while the lower truncation $M h^d\ge81$ is imposed because the experimental instability came from the too-small-$h$ regime. Once this stabilized admissibility rule is imposed, the canonical raw-max one-sided selector is oracle-competitive on all four testbeds.

The oracle-slope summaries are already reported in \Cref{tab:slopes} while the four-point empirical oracle-ratio summaries are reported in \Cref{tab:bandwidth-diagnostics}.

\begin{table}[t]
    \centering
    \caption{Secondary rate results based on the integrated squared error. These are reported only as descriptive diagnostics.}
    \label{tab:rate-ise}
    \begin{tabular}{lccc}
        \toprule
        Testbed & $d$ & Finite-range theory & Oracle-$h_{\rm or}$ \\
        \midrule
        GG & 1 & $-0.349379$ & $-0.337109$ \\
        GG & 2 & $-0.291150$ & $-0.286309$ \\
        MM & 1 & $-0.349379$ & $-0.383080$ \\
        MM & 2 & $-0.291150$ & $-0.378696$ \\
        \bottomrule
    \end{tabular}
\end{table}

\paragraph{Primary sup-grid plots.}
The primary sup-grid rate plots are shown in the main text in \Cref{fig:rates}. To avoid duplication, we do not repeat them here. This Appendix instead retains the numerical slope \Cref{tab:slopes}, the oracle-ratio summary and the bandwidth diagnostics \Cref{tab:bandwidth-diagnostics}, and the secondary ISE plots in \Cref{fig:rate-ise-appendix}.

\begin{figure}[t]
    \centering
    \includegraphics[width=.48\linewidth]{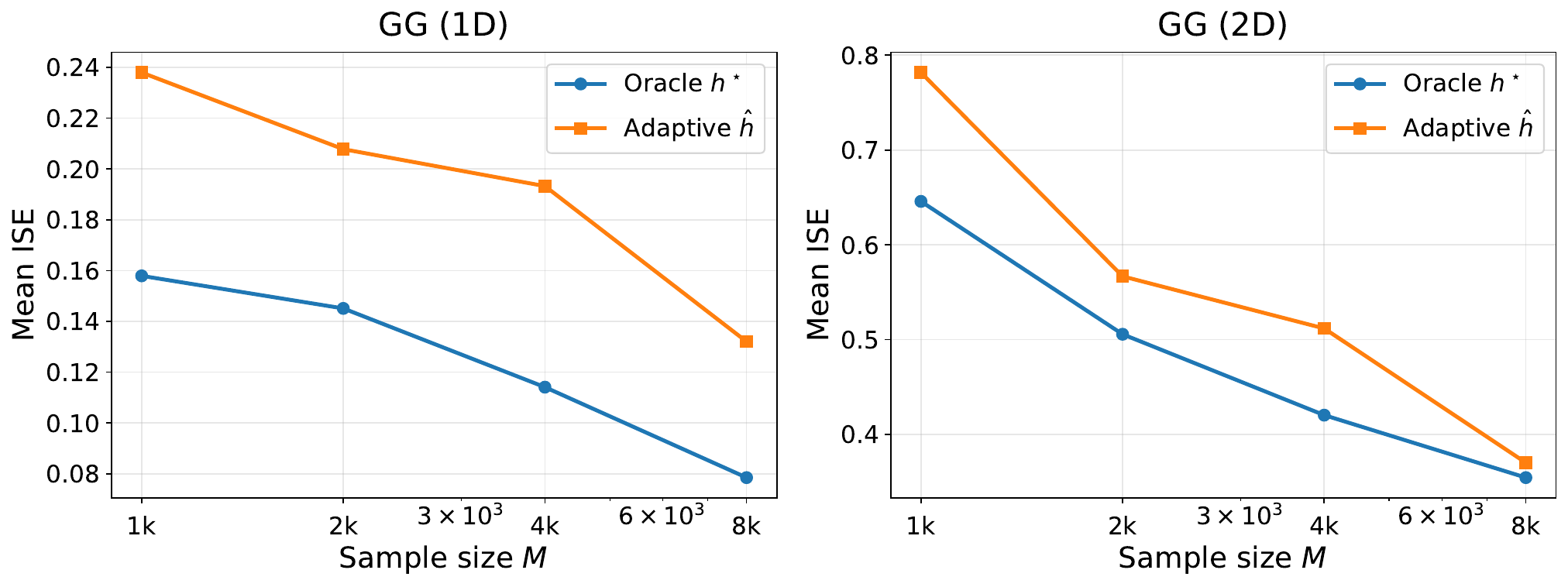}
    \hfill
    \includegraphics[width=.48\linewidth]{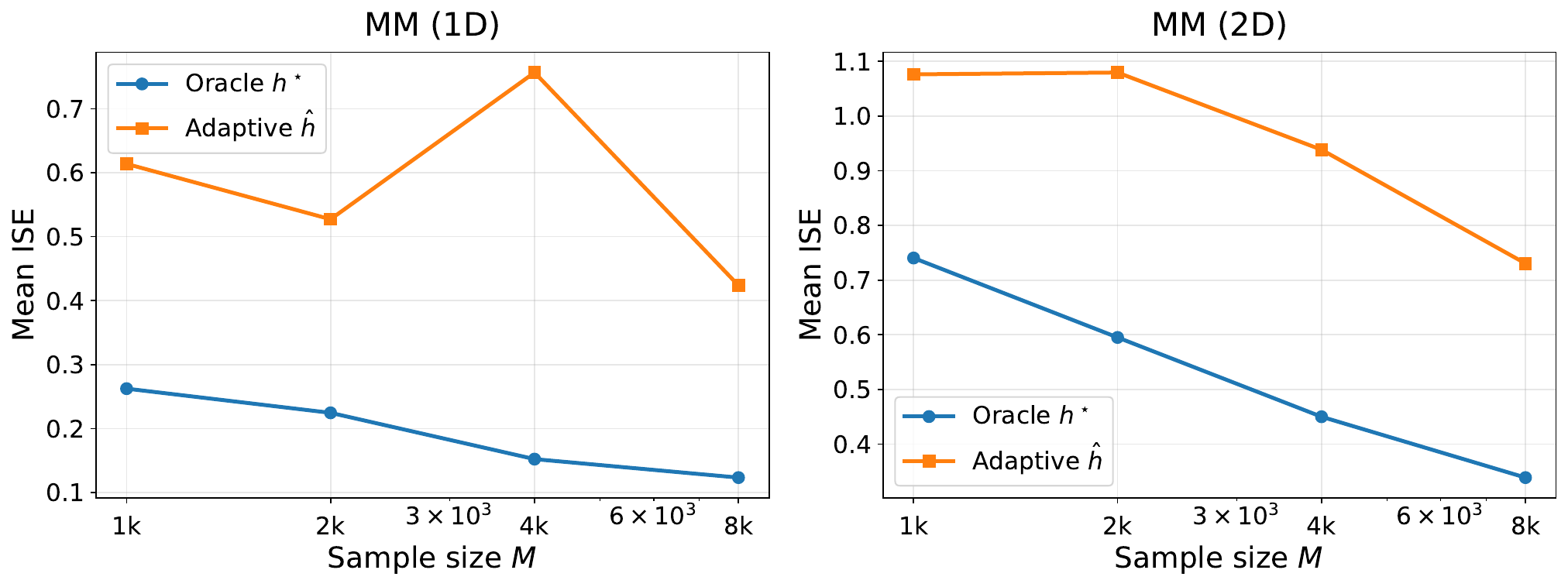}
    \caption{Secondary rate plots based on the integrated squared error.}
    \label{fig:rate-ise-appendix}
\end{figure}

\subsection{Pointwise CLT and confidence intervals}
\label{app:exp-clt}

The main-text CLT experiment is one-dimensional. For a fixed interior query $(t_0,x_0,\xi_0)$, let
\begin{equation}
\psi_{t_0,x_0,\xi_0}(y)=\bigl(y-x_0-\Delta(t_0)a^\ast(t_0,x_0;\xi_0)\bigr)F(t_0,\xi_0,x_0,y).
\end{equation}
In the diagonal bandwidth case, write $\hat f_h:=\hat f_1=\hat f_2$ and $\hat D_h(t,x;\xi):=\hat g_1(t,x;\xi)/\hat f_h(\xi)$. The plug-in variance estimator is
\begin{equation}
\hat\Sigma(t_0,x_0;\xi_0)
=
\frac{R(K)}{\hat f_h(\xi_0)\Delta(t_0)^2\hat D_h(t_0,x_0;\xi_0)^2}
\left(
\widehat{\mathbb E}[\hat\psi(X_u)^2\mid X_s=\xi_0]
-
\widehat{\mathbb E}[\hat\psi(X_u)\mid X_s=\xi_0]^2
\right),
\end{equation}
where
\begin{align}
&\hat\psi(y)=\bigl(y-x_0-\Delta(t_0)\hat a_h(t_0,x_0;\xi_0)\bigr)F(t_0,\xi_0,x_0,y),
\\
&\widehat{\mathbb E}[\phi(X_u)\mid X_s=\xi_0]
=
\frac{M^{-1}\sum_{m=1}^M \phi(X_u^{(m)})K_h(X_s^{(m)}-\xi_0)}{\hat f_h(\xi_0)}.
\end{align}

The CLT bandwidth must satisfy $M h_M^d\to\infty$ and $h_M^\beta\sqrt{M h_M^d}\to0$. We therefore use genuinely undersmoothed sequences of the form $h_M=cM^{-\alpha}$ with $(2\beta+d)^{-1}<\alpha<d^{-1}$. In the final reported one-dimensional runs we first screen the candidate exponents $\alpha\in\{0.22,0.24,0.26,0.28\}$ on $M\in\{4\times10^3,8\times10^3\}$ with $R_M=150$, then fix the final rule at $\alpha=0.22$ for GG1 and $\alpha=0.28$ for MM1, both with $c=1.0$. For repetition $r$, the standardized statistic is
\begin{equation}
Z_M^{(r)}
=
\frac{\sqrt{M h_M^d}\bigl(\hat a_{h_M}^{(r)}(t_0,x_0;\xi_0)-a^\ast(t_0,x_0;\xi_0)\bigr)}
{\sqrt{\hat\Sigma^{(r)}(t_0,x_0;\xi_0)}}.
\end{equation}

The final CLT runs use $R_M=300$ repetitions at each $M\in\{10^3,2\times10^3,4\times10^3,8\times10^3\}$. For each sample size we record the empirical mean $\bar Z_M$, empirical variance $\widehat{\Var}(Z_M)$, the empirical coverage of
\begin{equation}
\hat a_{h_M}^{(r)}(t_0,x_0;\xi_0)\pm1.96\sqrt{\hat\Sigma^{(r)}(t_0,x_0;\xi_0)/(M h_M^d)},
\end{equation}
and formal Shapiro--Wilk / Anderson--Darling diagnostics of the standardized replicates. QQ-plots against $N(0,1)$ are saved for the two largest sample sizes and are used as the main visual CLT diagnostic in the paper.

\begin{table}[t]
    \centering
    \caption{Final pointwise CLT diagnostics at the fixed interior query point: empirical mean and variance of the standardized statistic and empirical coverage of nominal $95\%$ confidence intervals in the $R_M=300$ runs.}
    \label{tab:clt-diagnostics}
    \begin{tabular}{lcccc}
        \toprule
        Testbed & $M$ & $\bar Z_M$ & $\widehat{\mathrm{Var}}(Z_M)$ & Coverage (\%) \\
        \midrule
        GG & $10^3$ & $-0.029120$ & $1.160508$ & $92.67$ \\
        GG & $2\times 10^3$ & $-0.083007$ & $0.964855$ & $96.33$ \\
        GG & $4\times 10^3$ & $0.056018$ & $0.828702$ & $96.67$ \\
        GG & $8\times 10^3$ & $-0.015311$ & $0.974488$ & $96.67$ \\
        \midrule
        MM & $10^3$ & $0.011673$ & $1.180449$ & $92.33$ \\
        MM & $2\times 10^3$ & $-0.054668$ & $0.964488$ & $96.67$ \\
        MM & $4\times 10^3$ & $0.133135$ & $0.879781$ & $96.33$ \\
        MM & $8\times 10^3$ & $0.041255$ & $0.894031$ & $96.00$ \\
        \bottomrule
    \end{tabular}
\end{table}

\begin{table}[t]
    \centering
    \caption{Formal normality diagnostics for the final pointwise CLT runs. We report the Shapiro--Wilk $p$-value and the Anderson--Darling statistic together with its $5\%$ critical value. The indicator \texttt{reject@5\%} equals $1$ if the Anderson--Darling statistic exceeds the $5\%$ critical value.}
    \label{tab:clt-normality}
    \begin{tabular}{lccccc}
        \toprule
        Testbed & $M$ & Shapiro $p$ & Anderson stat & Anderson $5\%$ crit & reject@5\% \\
        \midrule
        GG & $10^3$ & $0.139036$ & $0.689650$ & $0.750000$ & $0$ \\
        GG & $2\times 10^3$ & $0.929805$ & $0.253814$ & $0.750000$ & $0$ \\
        GG & $4\times 10^3$ & $0.180917$ & $0.536864$ & $0.750000$ & $0$ \\
        GG & $8\times 10^3$ & $0.584810$ & $0.483753$ & $0.750000$ & $0$ \\
        \midrule
        MM & $10^3$ & $0.094434$ & $0.487901$ & $0.750000$ & $0$ \\
        MM & $2\times 10^3$ & $0.139434$ & $0.500433$ & $0.750000$ & $0$ \\
        MM & $4\times 10^3$ & $0.905583$ & $0.173670$ & $0.750000$ & $0$ \\
        MM & $8\times 10^3$ & $0.466130$ & $0.414661$ & $0.750000$ & $0$ \\
        \bottomrule
    \end{tabular}
\end{table}

\paragraph{CLT plots and saved artifacts.}
The manuscript-level CLT visual is \Cref{fig:clt-adapt-edge}. To avoid duplication, we do not repeat it here. This Appendix instead records the numerical diagnostics in \Cref{tab:clt-diagnostics,tab:clt-normality}. The raw replicate-level statistics are saved as \texttt{pointwise\_clt.csv} files, and the QQ-plots generated during the final runs are released with the repository under the standard figure directories.

\subsection{Terminal-edge experiment}
\label{app:exp-edge}

To isolate the terminal singularity, we fix one moderate-to-large sample size and evaluate the estimator at $t\in\{u-0.40,u-0.25,u-0.15,u-0.10,u-0.05\}$. The bandwidth is selected once at the interior time $t_0=0.6$ and then held fixed. For this terminal-edge experiment only, we abbreviate \begin{equation} E_{\infty,\mathcal G}(t) :=E_{\infty,\mathcal G}(t,\xi_0;\hat h) =\max_{x\in\mathcal G_x} |\hat a_{\hat h}(t,x;\xi_0)-a^\ast(t,x;\xi_0)|. \end{equation} The primary diagnostic is this quantity together with its rescaled version $\Delta(t)E_{\infty,\mathcal G}(t)$. We also record the corresponding ISE curves as secondary diagnostics.

\paragraph{Terminal-edge plots.}
The terminal-edge curves are shown in the main text in \Cref{fig:clt-adapt-edge}. To avoid duplication, we do not repeat them here. This Appendix records only the protocol and the interpretation: the raw sup-grid error increases as $t\uparrow u$, while the rescaled quantity $\Delta(t)E_{\infty,\mathcal G}(t)$ remains substantially flatter.

\subsection{Bounded-support stress test}
\label{app:exp-stress}

To probe the proof role of bounded support, we run a one-dimensional stress test at the fixed interior queries GG1: $(t_0,x_0,\xi_0)=(0.6,0.2,0)$ and MM1: $(0.6,0.3,0.8)$, with the same canonical selector as in the final adaptive runs (raw-max, one-sided, $\kappa_{\mathrm{pair}}=\kappa_{\mathrm{final}}=2$, and the same lower-bandwidth floor), at the fixed sample size $M_{\mathrm{stress}}=4000$ with $R_{\mathrm{stress}}=100$ repetitions. For GG1, the wide-support variant changes only the truncation box from $[-3,3]$ to $[-5,5]$. For MM1, the wide-support variant also inflates the source and conditional variances from $(0.45)^2,(0.25)^2,(0.30)^2$ to $(0.8)^2,(0.9)^2,(1.0)^2$, while keeping the same means, linear maps, and logistic gate.

For each repetition, with selected bandwidth $\hat h$, we record the empirical variances of the raw weighted numerator and denominator summands
$W_m^N:=X_u^{(m)}F(t_0,\xi_0,x_0,X_u^{(m)})K_{\hat h}(X_s^{(m)}-\xi_0)$ and
$W_m^D:=F(t_0,\xi_0,x_0,X_u^{(m)})K_{\hat h}(X_s^{(m)}-\xi_0)$,
the pointwise empirical denominator $\hat D:=\hat D_{\hat h}(t_0,x_0;\xi_0)$, and the selected-estimator errors. To quantify denominator fragility, we set
\begin{equation}
\tau_D:=\tfrac14\ \operatorname{median}_{r,\mathrm{compact}}(\hat D^{(r)})
\end{equation}
separately for each testbed and report $\Pr(\hat D\le \tau_D)$.

\begin{table}[t]
    \centering
    \caption{Bounded-support stress test at $M_{\mathrm{stress}}=4000$ and $R_{\mathrm{stress}}=100$.}
    \label{tab:stress}
    \small
    \begin{tabular}{llcccc}
        \toprule
        Testbed & Setting & $\mathbb E[\widehat{\Var}(W^N)]$ & $\mathbb E[\widehat{\Var}(W^D)]$ & $\Pr(\hat D\le\tau_D)$ & med.\ $E_{\infty,\mathcal G}$ \\
        \midrule
        GG1 & compact & $0.1579$ & $0.7101$ & $0.00$ & $0.1839$ \\
        GG1 & wide    & $0.1732$ & $0.7843$ & $0.00$ & $0.1570$ \\
        \midrule
        MM1 & compact & $0.2638$ & $0.4047$ & $0.00$ & $0.2387$ \\
        MM1 & wide    & $0.7708$ & $2.1689$ & $0.10$ & $2.4260$ \\
        \bottomrule
    \end{tabular}
\end{table}

\paragraph{Interpretation.}
The stress effect is mild in GG1: the wide-support perturbation raises $\mathbb E[\widehat{\Var}(W^N)]$ and $\mathbb E[\widehat{\Var}(W^D)]$ only by factors $1.10$ and $1.10$, while the median sup-grid error and median ISE slightly decrease from $0.1839$ to $0.1570$ and from $0.1712$ to $0.1460$. Thus weakening compact support is not automatically catastrophic in an easier Gaussian testbed.

The harder MM1 testbed is qualitatively different. The wide-support perturbation multiplies $\mathbb E[\widehat{\Var}(W^N)]$ by $2.92$ and $\mathbb E[\widehat{\Var}(W^D)]$ by $5.36$, raises $\Pr(\hat D\le\tau_D)$ from $0$ to $0.10$, inflates the median sup-grid error from $0.2387$ to $2.4260$ (a factor $10.16$), and inflates the median ISE from $0.2499$ to $1.6083$ (a factor $6.44$). At the same time, the mean oracle ratio rises from $2.09$ to $7.12$, the mean selected bandwidth drops from $0.3221$ to $0.0655$, and the boundary-hit rate rises from $0$ to $0.10$. This is the expected failure mode: the deterministic mean structure is unchanged, but once the support control is weakened, the weighted empirical terms in the ratio estimator become substantially more variable, denominator stability deteriorates, and the selector is pulled toward pathological small-bandwidth behavior. This is precisely the role bounded support plays in the proof.

\subsection{Implementation notes and optional selector variants}
\label{app:exp-secondary}

The implementation is organized around a small set of core modules and driver scripts. The synthetic pair laws are defined in \texttt{src/sbdrift/models.py}; deterministic truth evaluation is implemented in \texttt{src/sbdrift/truth\_engine.py}; the kernel estimator is implemented in \texttt{src/sbdrift/estimator.py}; and the kernel definitions are collected in \texttt{src/sbdrift/kernels.py}. The experiment drivers are \texttt{scripts/00\_preflight.py}, \texttt{scripts/01\_rate.py}, and \texttt{scripts/02\_clt.py}. The pre-flight script writes numerical checks of the marginal density, denominator floor, truth-convergence probe, and sampling sanity checks. The rate script writes raw per-bandwidth and selected-bandwidth CSV files, processed summaries, and rate plots. The CLT script writes per-repetition pointwise statistics, processed summaries, and per-$M$ QQ-plot files, and also records the undersmoothing-screen runs together with the formal normality diagnostics used to choose the final one-dimensional CLT bandwidths. All experiments are implemented in \texttt{Python} using \texttt{NumPy}, \texttt{SciPy}, \texttt{pandas}, and \texttt{matplotlib}.

Each run is driven by a YAML configuration file specifying the synthetic family, support and evaluation boxes, query points, bandwidth grid parameters, repetition counts, and random seeds. The rate pipeline caches deterministic truth values on the evaluation grid to avoid recomputation across repeated runs. Outputs are written to the standard experiment directories \texttt{results/raw/}, \texttt{results/processed/}, and \texttt{results/figures/}. The repository state used for the paper contains the processed pre-flight summaries, the final theorem-facing rate and CLT summaries, the undersmoothing-screen and final CLT runs, the final stabilized adaptive-selector summaries, and the figure files corresponding to the saved experiment outputs.

The codebase also retains optional alternative selector implementations, including trimmed-max, two-sided, and ISE-based discrepancy rules. These remain available for users interested in separate comparison studies, but they are not used for the paper's quantitative claims. Accordingly, the reproducibility claim supported by the current codebase is the following: the repository contains the exact synthetic-model definitions, the deterministic truth engine, the kernel estimator implementation, the pre-flight/rate/CLT experiment drivers, the YAML configuration files, and the saved raw, processed, and figure outputs used to assemble the final tables and theorem-facing plots. Exact command lines, random seeds, repetition counts, the selected undersmoothing exponents, the stabilized bandwidth-floor rule, and run-specific parameter choices are recorded in the experiment logs and processed summaries.

\end{document}